\newcommand{\bbC}{{\mathbb{C}}}
\newcommand{\bbD}{{\mathbb{D}}}
\newcommand{\bbR}{{\mathbb{R}}}
\newcommand{\bbZ}{{\mathbb{Z}}}
\newcommand{\calE}{{\mathcal{E}}}
\newcommand{\calJ}{{\mathcal J}}
\newcommand{\fre}{{\frak{e}}}
\newcommand{\bdone}{{\boldsymbol{1}}}
\newcommand{\bddot}{{\boldsymbol{\cdot}}}
\newcommand{\dott}{\,\cdot\,}
\newcommand{\lb}{\label}
\newcommand{\f}{\frac}
\newcommand{\ol}{\overline}
\newcommand{\ti}{\tilde  }
\newcommand{\wti}{\widetilde  }
\newcommand{\Lt}{\text{\rm{L}}}
\newcommand{\Rt}{\text{\rm{R}}}
\newcommand{\tr}{\text{\rm{Tr}}}
\newcommand{\dist}{\text{\rm{dist}}}
\newcommand{\ran}{\text{\rm{ran}}}
\newcommand{\ess}{\text{\rm{ess}}}
\newcommand{\ac}{\text{\rm{ac}}}
\newcommand{\singc}{\text{\rm{sc}}}
\newcommand{\s}{\text{\rm{s}}}
\newcommand{\pp}{\text{\rm{pp}}}
\newcommand{\supp}{\text{\rm{supp}}}
\newcommand{\intt}{\text{\rm{int}}}
\newcommand{\bi}{\bibitem}
\newcommand{\beq}{\begin{equation}}
\newcommand{\eeq}{\end{equation}}
\newcommand{\ba}{\begin{align}}
\newcommand{\ea}{\end{align}}
\newcommand{\veps}{\varepsilon}
\let\det=\undefined\DeclareMathOperator{\det}{det}
\newcommand{\ang}[1]{\langle\! \langle#1\rangle\! \rangle}
\newcounter{smalllist}
\newenvironment{SL}{\begin{list}{{\rm\roman{smalllist})}}{%
\setlength{\topsep}{0mm}\setlength{\parsep}{0mm}\setlength{\itemsep}{0mm}%
\setlength{\labelwidth}{2em}\setlength{\leftmargin}{2em}\usecounter{smalllist}%
}}{\end{list}}
\DeclareMathOperator{\Ima}{Im}
\DeclareMathOperator{\diam}{diam}
\numberwithin{equation}{section}
\newtheorem{theorem}{Theorem}[section]
\newtheorem*{p2.1}{Proposition 2.1}
\newtheorem{proposition}[theorem]{Proposition}
\newtheorem{lemma}[theorem]{Lemma}
\newtheorem{corollary}[theorem]{Corollary}
\theoremstyle{definition}
\newtheorem{example}[theorem]{Example}
\newtheorem{conjecture}[theorem]{Conjecture}
\theoremstyle{remark}
\newtheorem*{remark}{Remark}
\newtheorem*{remarks}{Remarks}
\newtheorem*{definition}{Definition}
\newcommand{\abs}[1]{\lvert#1\rvert}
\newcommand{\Norm}[1]{\lVert#1\rVert}
\newcommand{\jap}[1]{\langle #1 \rangle}
\begin{document}
\title{The Christoffel--Darboux Kernel}

\author[B.~Simon]{Barry Simon$^*$}

\address{Mathematics 253--37, California Institute of Technology, Pasadena, CA 91125,
U.S.A.}

\email{bsimon@caltech.edu}

\urladdr{http://www.math.caltech.edu/people/simon.html}

\thanks{This work was supported in part by NSF grant DMS-0652919 and
U.S.--Israel Binational Science Foundation (BSF) Grant No.\ 2002068}

\keywords{Orthogonal polynomials, spectral theory}
\subjclass[2000]{34L40, 47-02, 42C05}

\begin{abstract} A review of the uses of the CD kernel in the spectral theory of
orthogonal polynomials, concentrating on recent results.
\end{abstract}

\maketitle

\setcounter{tocdepth}{1}
\tableofcontents

%%%%%%%%%%%%%%%%%%%%%%%%%%%%%%%
\section{Introduction} \lb{s1}
%%%%%%%%%%%%%%%%%%%%%%%%%%%%%%%

This article reviews a particular tool of the spectral theory of orthogonal polynomials. Let $\mu$
be a measure on $\bbC$ with finite moments, that is,
\begin{equation}  \lb{1.1}
\int \abs{z}^n\, d\mu(z) <\infty
\end{equation}
for all $n=0,1,2,\dots$ and which is nontrivial in the sense that it is not supported on a finite set
of points. Thus, $\{z^n\}_{n=0}^\infty$ are independent in $L^2 (\bbC,d\mu)$, so by Gram--Schmidt,
one can define monic orthogonal polynomials, $X_n(z;d\mu)$, and orthonormal polynomials, $x_n =
X_n/\Norm{X_n}_{L^2}$. Thus,
\begin{gather}
\int \bar z^j X_n(z;d\mu) \, d\mu(z) = 0 \qquad j=0, \dots, n-1 \lb{1.2} \\
X_n(z) = z^n + \text{lower order} \lb{1.3} \\
\int \ol{x_n(z)}\, x_m(z)\, d\mu =\delta_{nm} \lb{1.4}
\end{gather}

We will often be interested in the special cases where $\mu$ is supported on $\bbR$ (especially
with support compact), in which case we use $P_n,p_n$ rather than $X_n,x_n$, and where $\mu$ is
supported on $\partial\bbD$ ($\bbD=\{z\mid\abs{z}<1\}$), in which case we use $\Phi_n,\varphi_n$.
We call these OPRL and OPUC (for ``real line'' and ``unit circle'').

OPRL and OPUC are spectral theoretic because there are Jacobi parameters $\{a_n,b_n\}_{n=1}^\infty$
and Verblunsky coefficients $\{\alpha_n\}_{n=0}^\infty$ with recursion relations ($p_{-1}=0$; $p_0=
\Phi_0=1$):
\begin{align}
zp_n(z) &= a_{n+1} p_{n+1}(z) + b_{n+1} p_n(z) + a_n p_{n-1}(z) \lb{1.5} \\
\Phi_{n+1}(z) &= z\Phi_n(z) - \bar\alpha_n \Phi_n^*(z) \lb{1.6} \\
\Phi_n^*(z) &= z^n \,\ol{\Phi_n (1/\bar z)} \lb{1.7}
\end{align}

We will sometimes need the monic OPRL and normalized OPUC recursion relations:
\begin{align}
& zP_n(z) = P_{n+1}(z) + b_{n+1} P_n(z) + a_n^2 P_{n-1}(z) \lb{1.7a} \\
& z\varphi_n(z) = \rho_n \varphi_{n+1}(z) + \bar\alpha_n \varphi_n^*(z) \lb{1.7b} \\
& \rho_n \equiv (1-\abs{\alpha_n}^2)^{1/2} \lb{1.7c}
\end{align}
Of course, the use of $\rho_n$ implies $\abs{\alpha_n}<1$ and all sets of $\{\alpha_n\}_{n=0}^\infty$
obeying this occur. Similarly, $b_n\in\bbR$, $a_n\in (0,\infty)$ and all such sets occur. In the OPUC
case, $\{\alpha_n\}_{n=0}^\infty$ determine $d\mu$, while in the OPRL case, they do if $\sup(\abs{a_n}
+ \abs{b_n}) <\infty$, and may or may not in the unbounded case. For basics of OPRL, see
\cite{Szb,Chi,FrB,Rice}; and for basics of OPUC, see \cite{Szb,GBk,FrB,OPUC1,OPUC2,1Ft}.

We will use $\kappa_n$ (or $\kappa_n(d\mu)$) for the leading coefficient of $x_n$, $p_n$, or
$\varphi_n$, so
\begin{equation} \lb{1.10a}
\kappa_n = \Norm{X_n}_{L^2 (d\mu)}^{-1}
\end{equation}

The {\it Christoffel--Darboux kernel} (named after \cite{Chris,Dar}) is defined by
\begin{equation}  \lb{1.8}
K_n(z,\zeta) = \sum_{j=0}^n \ol{x_j(z)}\, x_j(\zeta)
\end{equation}
We sometimes use $K_n(z,\zeta;\mu)$ if we need to make the measure explicit. Note that if $c>0$,
\begin{equation}  \lb{1.8a}
K_n(z,\zeta;c\mu) = c^{-1} K_n(z,\zeta;\mu)
\end{equation}
since $x_n(z; cd\mu) = c^{-1/2} x_n (z;d\mu)$.

By the Schwarz inequality, we have
\begin{equation}\lb{1.13a}
\abs{K_n(z,\zeta)}^2 \leq K_n(z,z) K_n(\zeta,\zeta)
\end{equation}

There are three variations of convention. Some only sum to $n-1$; this is the more common convention
but \eqref{1.8} is used by Szeg\H{o} \cite{Szb}, Atkinson \cite{Atk}, and in \cite{OPUC1,OPUC2}.
As we will note shortly, it would be more natural to put the complex conjugate on $x_n(\zeta)$, not
$x_n(z)$---and a very few authors do that. For OPRL with $z,\zeta$ real, the complex conjugate is
irrelevant---and some authors leave it off even for complex $z$ and $\zeta$.

As a tool in spectral analysis, convergence of OP expansions, and other aspects of analysis, the use
of the CD kernel has been especially exploited by Freud and Nevai, and summarized in Nevai's paper
on the subject \cite{NevFr}. A series of recent papers by Lubinsky (of which \cite{Lub,Lub-jdam} are
most spectacular)  has caused heightened interest in the subject and motivated me to write this
comprehensive review.

Without realizing they were dealing with OPRL CD kernels, these objects have been used extensively in
the spectral theory community, especially the diagonal kernel
\begin{equation}  \lb{1.8b}
K_n(x,x) = \sum_{j=0}^n\, \abs{p_j(x)}^2
\end{equation}
Continuum analogs of the ratios of this object for the first and second kind polynomials appeared
in the work of Gilbert--Pearson \cite{GP} and the discrete analog in Khan--Pearson \cite{KPe} and
then in Jitomirskaya--Last \cite{JL1}. Last--Simon \cite{LastS} studied $\f{1}{n} K_n(x,x)$ as
$n\to\infty$. Variation of parameters played a role in all these works and it exploits what is
essentially mixed CD kernels (see Section~\ref{s8}).

One of our goals here is to emphasize the operator theoretic point of view, which is often
underemphasized in the OP literature. In particular, in describing $\mu$, we think of the
operator $M_z$ on $L^2 (\bbC,d\mu)$ of multiplication by $z$:
\begin{equation}  \lb{1.9}
(M_z f)(z) = zf(z)
\end{equation}
If $\supp(d\mu)$ is compact, $M_z$ is a bounded operator defined on all of $L^2 (\bbC,d\mu)$. If
it is not compact, there are issues of domain, essential selfadjointness, etc.\ that will not
concern us here, except to note that in the OPRL case, they are connected to uniqueness of the
solution of the moment problem (see \cite{S270}). With this in mind, we use $\sigma(d\mu)$
for the spectrum of $M_z$, that is, the support of $d\mu$, and $\sigma_\ess(d\mu)$ for the
essential spectrum. When dealing with OPRL of compact support (where $M_z$ is bounded selfadjoint)
or OPUC (where $M_z$ is unitary), we will sometimes use $\sigma_\ac(d\mu)$, $\sigma_\singc (d\mu)$,
$\sigma_\pp (d\mu)$ for the spectral theory components. (We will discuss $\sigma_\ess(d\mu)$ only
in the OPUC/OPRL case where it is unambiguous, but for general operators, there are multiple
definitions; see \cite{EdEv}.)

The basis of operator theoretic approaches to the study of the CD kernel depends on its interpretation
as the integral kernel of a projection. In $L^2 (\bbC,d\mu)$, the set of polynomials of degree at most
$n$ is an $n+1$-dimensional space. We will use $\pi_n$ for the operator of orthogonal projection onto
this space. Note that
\begin{equation}  \lb{1.10}
(\pi_n f)(\zeta) = \int K_n(z,\zeta) f(z)\, d\mu(z)
\end{equation}
The order of $z$ and $\zeta$ is the opposite of the usual for integral kernels and why we mentioned
that putting complex conjugation on $x_n(\zeta)$ might be more natural in \eqref{1.8}.

In particular,
\begin{equation}  \lb{1.11}
\deg(f)\leq n \Rightarrow f(\zeta) = \int K_n(z,\zeta) f(z)\, d\mu(z)
\end{equation}
In particular, since $K_n$ is a polynomial in $\zeta$ of degree $n$, we have
\begin{equation}  \lb{1.12}
K_n(z,w) =\int K_n(z,\zeta) K_n(\zeta,w)\, d\mu(\zeta)
\end{equation}
often called the reproducing property.

One major theme here is the frequent use of operator theory, for example, proving the CD formula
as a statement about operator commutators. Another theme, motivated by Lubinsky \cite{Lub,Lub-jdam},
is the study of asymptotics of $\f{1}{n} K_n(x,y)$ on diagonal $(x=y$) and slightly off diagonal
($(x-y)=O(\f{1}{n})$).

Sections~\ref{s2}, \ref{s3}, and \ref{s6} discuss very basic formulae, and Sections~\ref{s4} and
\ref{s7} simple applications. Sections~\ref{s5} and \ref{s8} discuss extensions of the context
of CD kernels. Section~\ref{s9} starts a long riff on the use of the Christoffel variational principle
which runs through Section~\ref{s23-new}. Section~\ref{s23} is a final simple application.

\medskip
Vladimir Maz'ya has been an important figure in the spectral analysis of partial differential operators.
While difference equations are somewhat further from his opus, they are related. It is a pleasure
to dedicate this article with best wishes on his 70th birthday.

\smallskip
I would like to thank J.~Christiansen for producing Figure~1 (in Section~\ref{s7}) in Maple,
and C.~Berg, F.~Gesztesy, L.~Golinskii, D.~Lubinsky, F.~Marcell\'an, E.~Saff, and V.~Totik
for useful discussions.

%%%%%%%%%%%%%%%%%%%%%%%%%%%%%%%%%%
\section{The ABC Theorem} \lb{s2}
%%%%%%%%%%%%%%%%%%%%%%%%%%%%%%%%%%

We begin with a result that is an aside which we include because it deserves to be better known.
It was rediscovered and popularized by Berg \cite{Bergppt}, who found it earliest in a 1939
paper of Collar \cite{Coll}, who attributes it to his teacher, Aitken---so we dub it the ABC theorem.
Given that it is essentially a result about Gram--Schmidt, as we shall see, it is likely it really
goes back to the nineteenth century. For applications of this theorem, see \cite{Boro,Joh}.

$K_n$ is a polynomial of degree $n$ in $\bar z$ and $\zeta$, so we can define an $(n+1)\times (n+1)$
square matrix, $k^{(n)}$, with entries $k_{jm}^{(n)}$, $0\leq j,m\leq n$, by
\begin{equation}  \lb{2.1}
K_n(z,\zeta) = \sum_{j,m=0}^n k_{jm}^{(n)} \bar z^m \zeta^j
\end{equation}

One also has the moment matrix
\begin{equation}  \lb{2.2}
m_{jk}^{(n)} =\jap{z^j,z^k} = \int \bar z^j z^k\, d\mu(z)
\end{equation}
$0 \leq j,k\leq n$. For OPRL, this is a function of $j+k$, so $m^{(n)}$ is a Hankel matrix. For OPUC,
this is a function of $j-k$, so $m^{(n)}$ is a Toeplitz matrix.

\begin{theorem}[ABC Theorem]\lb{T2.1}
\begin{equation}  \lb{2.2a}
(m^{(n)})^{-1} = k^{(n)}
\end{equation}
\end{theorem}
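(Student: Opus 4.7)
The plan is to use the reproducing property \eqref{1.11} applied to monomials and then compare coefficients. The proof should be essentially a one-line identification once the bookkeeping is set up.

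First, I would apply \eqref{1.11} to the polynomial $f(z)=z^k$ for each $k=0,1,\dots,n$ (each has degree $\leq n$), which yields
\begin{equation*}
\zeta^k = \int K_n(z,\zeta)\, z^k\, d\mu(z).
\end{equation*}
Substituting the expansion \eqref{2.1} for $K_n(z,\zeta)$ and pulling the $\zeta^j$ factors out of the integral, the right-hand side becomes
\begin{equation*}
\sum_{j,m=0}^n k^{(n)}_{jm}\, \zeta^j \int \bar z^m z^k\, d\mu(z) = \sum_{j=0}^n \left( \sum_{m=0}^n k^{(n)}_{jm}\, m^{(n)}_{mk} \right) \zeta^j,
\end{equation*}
using the definition \eqref{2.2} of $m^{(n)}_{mk}$.

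Next I would invoke the linear independence of $\{1,\zeta,\dots,\zeta^n\}$ (which is implicit from the nontriviality hypothesis on $\mu$) to equate coefficients of $\zeta^j$ on both sides. Since the left-hand side is $\zeta^k$, this gives
\begin{equation*}
\sum_{m=0}^n k^{(n)}_{jm}\, m^{(n)}_{mk} = \delta_{jk},
\end{equation*}
which is precisely the matrix identity $k^{(n)} m^{(n)} = I$, equivalent to \eqref{2.2a}.

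I do not foresee a genuine obstacle here; the only thing to be careful about is the index convention in \eqref{2.1} (namely that $k^{(n)}_{jm}$ is the coefficient of $\bar z^m \zeta^j$, not of $\bar z^j \zeta^m$), which determines which way round the product reads. The fact that $m^{(n)}$ is positive definite (hence invertible) follows independently from nontriviality of $\mu$, so the one-sided inverse computed above is automatically the two-sided inverse; alternatively, one could repeat the argument with $f(\zeta)=\zeta^k$ in the $\bar\zeta$-variable to get $m^{(n)}k^{(n)}=I$ directly.
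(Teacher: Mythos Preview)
Your proof is correct and is essentially identical to the paper's own argument: apply the reproducing property \eqref{1.11} to the monomials $z^k$, substitute the expansion \eqref{2.1}, use \eqref{2.2} to evaluate the integrals, and equate coefficients of $\zeta^j$ to obtain $k^{(n)} m^{(n)} = I$. Your additional remarks about linear independence and the one-sided versus two-sided inverse are reasonable clarifications but not strictly needed, since for square matrices a one-sided inverse is automatically two-sided.
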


\begin{proof} By \eqref{1.11} for $\ell=0, \dots, n$,
\begin{equation}  \lb{2.3}
\int K_n(z,\zeta) z^\ell\, d\mu(z) = \zeta^\ell
\end{equation}
Plugging \eqref{2.1} in for $K$, using \eqref{2.2} to do the integrals leads to
\begin{equation}  \lb{2.4}
\sum_{j,q=0}^n k_{jq}^{(n)} m_{q\ell}^{(n)} \zeta^j = \zeta^\ell
\end{equation}
which says that
\begin{equation}  \lb{2.5}
\sum_q k_{jq}^{(n)} m_{q\ell}^{(n)} =\delta_{j\ell}
\end{equation}
which is \eqref{2.2a}.
\end{proof}

Here is a second way to see this result in a more general context: Write
\begin{equation}  \lb{2.6}
x_j(z) =\sum_{k=0}^j a_{jk} z^k
\end{equation}
so we can define an $(n+1)\times (n+1)$ triangular matrix $a^{(n)}$ by
\begin{equation}  \lb{2.7}
a_{jk}^{(n)} =a_{jk}
\end{equation}
Then (the Cholesky factorization of $k$)
\begin{equation}  \lb{2.7x}
k^{(n)} = a^{(n)} (a^{(n)})^*
\end{equation}
with ${}^*$ Hermitean adjoint. The condition
\begin{equation}  \lb{2.8}
\jap{x_j,x_\ell} =\delta_{j\ell}
\end{equation}
says that
\begin{equation}  \lb{2.9}
(a^{(n)})^* m^{(n)} (a^{(n)}) =\bdone
\end{equation}
the identity matrix. Multiplying by $(a^{(n)})^*$ on the right and $[(a^{(n)})^*]^{-1}$ on
the left yields \eqref{2.2a}. This has a clear extension to a general Gram--Schmidt setting.

%%%%%%%%%%%%%%%%%%%%%%%%%%%%%%%%%%%%%%%%%%%%%%%%%%%%%%%%%%%%%%%%%%%%%%%%%%%%%%
\section{The Christoffel--Darboux Formula} \lb{s3}
%%%%%%%%%%%%%%%%%%%%%%%%%%%%%%%%%%%%%%%%%%%%%%%%%%%%%%%%%%%%%%%%%%%%%%%%%%%%%%

The Christoffel--Darboux formula for OPRL says that
\begin{equation}  \lb{3.1}
K_n(z,\zeta) = a_{n+1} \biggl( \f{\ol{p_{n+1}(z)}\, p_n(\zeta) - \ol{p_n(z)}\, p_{n+1} (\zeta)}
{\bar z-\zeta}\biggr)
\end{equation}
and for OPUC that
\begin{equation}  \lb{3.2}
K_n(z,\zeta) = \f{\ol{\varphi_{n+1}^*(z)}\, \varphi_{n+1}^*(\zeta) -\ol{\varphi_{n+1}(z)}\,
\varphi_{n+1}(\zeta)}{1-\bar z\zeta}
\end{equation}
The conventional wisdom is that there is no CD formula for general OPs, but we will see, in a sense,
that is only half true. The usual proofs are inductive. Our proofs here will be direct operator theoretic
calculations.

We focus first on \eqref{3.1}. From the operator point of view, the key is to note that, by \eqref{1.10},
\begin{equation}  \lb{3.3}
\jap{g,[M_z,\pi_n]f} = \int \ol{g(\zeta)}\, (\bar\zeta-z) K_n(z,\zeta)f(z)\, d\mu(\zeta) d\mu(z)
\end{equation}
where $[A,B]=AB-BA$. For OPRL, in \eqref{3.3}, $\zeta$ and $z$ are real, so \eqref{3.1} for $z,\zeta
\in\sigma(d\mu)$ is equivalent to
\begin{equation}  \lb{3.4}
[M_z,\pi_n] = a_{n+1} [\jap{p_n, \dott} p_{n+1} - \jap{p_{n+1}, \dott} p_n]
\end{equation}
While \eqref{3.4} only proves \eqref{3.1} for such $\bar z,\zeta$ by the fact that both sides are
polynomials in $z$ and $\zeta$, it is actually equivalent. Here is the general result:

\begin{theorem}[General Half CD Formula]\lb{T3.1} Let $\mu$ be a measure on $\bbC$ with finite
moments. Then:
\begin{gather}
(1-\pi_n) [M_z,\pi_n](1-\pi_n) = 0 \lb{3.5} \\
\pi_n [M_z,\pi_n]\pi_n = 0 \lb{3.6} \\
(1-\pi_n) [M_z,\pi_n]\pi_n = \f{\Norm{X_{n+1}}}{\Norm{X_n}}\, \jap{x_n,\dott}x_{n+1} \lb{3.7}
\end{gather}
\end{theorem}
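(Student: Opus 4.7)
The plan is to observe that parts (3.5) and (3.6) are essentially automatic consequences of the fact that $\pi_n$ is a projection, so the whole content of the theorem sits in part (3.7). I would dispense with the first two in a single sentence and then focus on identifying the rank-one operator in (3.7) explicitly.

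For (3.5), expanding the commutator gives
\[
(1-\pi_n)M_z\pi_n(1-\pi_n) - (1-\pi_n)\pi_n M_z(1-\pi_n),
\]
and both terms vanish because $\pi_n(1-\pi_n) = (1-\pi_n)\pi_n = 0$. For (3.6), the same identity collapses $\pi_n M_z \pi_n \pi_n - \pi_n \pi_n M_z \pi_n$ to $\pi_n M_z \pi_n - \pi_n M_z \pi_n = 0$. These are bookkeeping and do not require any structure beyond $\pi_n^2 = \pi_n$.

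The meat is (3.7). I would again expand the commutator and use $(1-\pi_n)\pi_n=0$ to reduce to proving
\[
(1-\pi_n)M_z\pi_n = \frac{\Norm{X_{n+1}}}{\Norm{X_n}}\,\jap{x_n,\dott}x_{n+1}.
\]
The key structural observation is that $M_z$ raises polynomial degree by one, so the range of $M_z\pi_n$ sits inside polynomials of degree at most $n+1$; after applying $1-\pi_n$ we land in a one-dimensional space, namely $\bbC\,x_{n+1}$. To pin down the operator, I would decompose any $f$ in the range of $\pi_n$ as $f = \jap{x_n,f}\,x_n + g$ where $g$ has degree at most $n-1$. Then $M_z g$ has degree at most $n$, so $(1-\pi_n)M_z g = 0$, and the whole contribution comes from the $x_n$-component.

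The one remaining computation is the scalar: since $x_n = \kappa_n z^n + \text{lower}$, the polynomial $zx_n$ has leading term $\kappa_n z^{n+1} = (\kappa_n/\kappa_{n+1})(\kappa_{n+1}z^{n+1})$, so its projection onto the span of $x_{n+1}$ (in the orthogonal complement of degree $\leq n$) equals $(\kappa_n/\kappa_{n+1})x_{n+1}$. By \eqref{1.10a}, $\kappa_n/\kappa_{n+1} = \Norm{X_{n+1}}/\Norm{X_n}$, which gives the stated prefactor. The only place where something could conceivably go wrong is the decomposition step, so the small obstacle to check carefully is simply that $f - \jap{x_n,f}\,x_n$ really is orthogonal to $x_n$ inside $\pi_n L^2$ and hence has degree at most $n-1$; this is immediate from orthonormality of $\{x_0,\dots,x_n\}$.
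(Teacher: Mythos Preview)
Your proof is correct and follows essentially the same route as the paper's: both reduce (3.5) and (3.6) to the projection identity $\pi_n(1-\pi_n)=0$, then for (3.7) collapse $(1-\pi_n)[M_z,\pi_n]\pi_n$ to $(1-\pi_n)M_z\pi_n$, observe this vanishes on $\ran(\pi_{n-1})$, and compute the action on the remaining one-dimensional piece. The only cosmetic difference is that the paper applies the operator to the monic $X_n$ and reads off $(1-\pi_n)M_z X_n = X_{n+1}$ directly, whereas you apply it to the normalized $x_n$ and extract the constant via the leading coefficients $\kappa_n/\kappa_{n+1}=\Norm{X_{n+1}}/\Norm{X_n}$; these are the same computation in different normalizations.
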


\begin{remark} If $\mu$ has compact support, these are formulae involving bounded operators
on $L^2 (\bbC,d\mu)$. If not, regard $\pi_n$ and $M_z$ as maps of polynomials to polynomials.
\end{remark}

\begin{proof} \eqref{3.5} follows from expanding $[M_z,\pi_n]$ and using
\begin{equation}  \lb{3.8}
\pi_n (1-\pi_n) = (1-\pi_n) \pi_n =0
\end{equation}
If we note that $[M_z,\pi_n]=-[M_z, (1-\pi_n)]$, \eqref{3.6} similarly follows from \eqref{3.8}.
By \eqref{3.8} again,
\begin{equation}  \lb{3.9}
(1-\pi_n) [M_z,\pi_n]\pi_n = (1-\pi_n) M_z \pi_n
\end{equation}
On $\ran (\pi_{n-1})$, $\pi_n$ is the identity, and multiplication by $z$ leaves one in $\pi_n$,
that is,
\begin{equation}  \lb{3.10}
(1-\pi_n) M_z \pi_n \restriction \ran (\pi_{n-1}) =0
\end{equation}
On the other hand, for the monic OPs,
\begin{equation}  \lb{3.11}
(1-\pi_n) M_z \pi_n X_n = X_{n+1}
\end{equation}
since $M_z \pi_n X_n = z^{n+1} +$ lower order and $(1-\pi_n)$ takes any such polynomial to $X_{n+1}$.
Since
\[
\f{\Norm{X_{n+1}}}{\Norm{X_n}}\, \jap{x_n,X_n} x_{n+1} = X_{n+1}
\]
we see \eqref{3.4} holds on $\ran(1-\pi_n)+\ran(\pi_{n-1}) + [X_n]$, and so on all of $L^2$.
\end{proof}

From this point of view, we can understand what is missing for a CD formula for general OP. The
missing piece is
\begin{equation}  \lb{3.8x}
\pi_n [M_z,\pi_n] (1-\pi_n) = ((1-\pi_n)M_z^* \pi_n)^*
\end{equation}
The operator on the left of \eqref{3.7} is proven to be rank one, but $(1-\pi_n) M_z^* \pi_n$
is, in general, rank $n$. For $\varphi\in \ker[(1-\pi_n) M_z^* \pi_n]\cap\ran(\pi_n)$ means that
$\varphi$ is a polynomial of degree $n$ and so is $\bar z\varphi$, at least for a.e.\ $z$ with
respect to $\mu$. Two cases where many $\bar z\varphi$ are polynomials of degree $n$---indeed,
so many that $(1-\pi_n)M_z^* \pi_n$ is also rank one---are for OPRL where $\bar z\varphi=z\varphi$
(a.e.\ $z\in \sigma(d\mu)$) and OPUC where $\bar z\varphi =z^{-1} \varphi$ (a.e.\ $z\in\sigma(d\mu)$).
In the first case, $\bar z\varphi\in\ran (\pi_n)$ if $\deg(\varphi)\leq n-1$, and in the second case,
if $\varphi(0) =0$.

Thus, only for these two cases do we expect a simple formula for $[M_z,\pi]$.

\begin{theorem}[CD Formula for OPRL]\lb{T3.2} For OPRL, we have
\begin{equation}  \lb{3.9x}
[M_z,\pi_n] = a_{n+1} [\jap{p_n,\dott} p_{n+1} - \jap{p_{n+1}, \dott} p_n]
\end{equation}
and \eqref{3.1} holds for $\bar z\neq \zeta$.
\end{theorem}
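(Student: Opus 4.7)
First, I would decompose $[M_z, \pi_n]$ into four blocks via $I = \pi_n + (1-\pi_n)$ applied on both sides. Theorem \ref{T3.1} handles three of them: \eqref{3.5} and \eqref{3.6} kill the two diagonal blocks, while \eqref{3.7} evaluates the lower off-diagonal block. Combined with the fact that the monic OPRL recursion \eqref{1.7a} forces $\Norm{X_{n+1}}/\Norm{X_n} = a_{n+1}$, and that $x_n = p_n$ in OPRL notation, this gives
\[
(1-\pi_n)[M_z,\pi_n]\pi_n = a_{n+1}\,\jap{p_n,\dott}\,p_{n+1}.
\]

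The decisive new ingredient for OPRL is self-adjointness. Since $\supp(d\mu) \subseteq \bbR$, $M_z$ is self-adjoint (treated as a map on polynomials if $\supp(d\mu)$ is unbounded), and $\pi_n$ is an orthogonal projection, so $[M_z,\pi_n]^* = [\pi_n, M_z] = -[M_z,\pi_n]$. Taking the adjoint of the block just computed therefore supplies the fourth one:
\[
\pi_n[M_z,\pi_n](1-\pi_n) = -\bigl((1-\pi_n)[M_z,\pi_n]\pi_n\bigr)^* = -a_{n+1}\,\jap{p_{n+1},\dott}\,p_n,
\]
using $(\jap{\phi,\dott}\psi)^* = \jap{\psi,\dott}\phi$. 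Summing the four blocks produces \eqref{3.9x}. This is precisely the mechanism foreshadowed by the discussion after \eqref{3.8x}: reality of the spectrum makes $\bar z\varphi = z\varphi$, so $(1-\pi_n)M_z^*\pi_n$ is rank one rather than rank $n$.

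For the pointwise identity \eqref{3.1}, I would test the operator identity against arbitrary $f,g$ via \eqref{3.3}. Since $z,\zeta$ are real on $\supp(d\mu)$ (so $\bar\zeta = \zeta$), this yields, for $(z,\zeta) \in \sigma(d\mu)^2$,
\[
(\zeta - z)K_n(z,\zeta) = a_{n+1}\bigl[\ol{p_n(z)}\,p_{n+1}(\zeta) - \ol{p_{n+1}(z)}\,p_n(\zeta)\bigr].
\]
OPRL have real coefficients, so both sides are polynomials in the real variables $z,\zeta$; as $\sigma(d\mu)$ is an infinite subset of $\bbR$ (the measure being nontrivial), the identity extends throughout $\bbR^2$, and hence, viewed as a polynomial identity in the independent variables $\bar z$ and $\zeta$, throughout $\bbC^2$. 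Dividing by $\bar z - \zeta$ when nonzero gives \eqref{3.1}.

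The main obstacle is conceptual rather than computational: recognizing that self-adjointness of $M_z$ is exactly the extra hypothesis over Theorem \ref{T3.1} that collapses the a priori rank-$n$ block $\pi_n[M_z,\pi_n](1-\pi_n)$ to rank one, which is what makes the clean two-term CD formula possible.
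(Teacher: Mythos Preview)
Your proof is correct and follows essentially the same route as the paper: use Theorem~\ref{T3.1} for three of the four blocks, invoke self-adjointness of $M_z$ (via $[M_z,\pi_n]^* = -[M_z,\pi_n]$) to obtain the remaining block $\pi_n[M_z,\pi_n](1-\pi_n)$ as the negative adjoint of \eqref{3.7}, and then pass from the operator identity to \eqref{3.1} by the polynomial extension argument. The only cosmetic difference is that the paper computes $\Norm{P_{n+1}}/\Norm{P_n}=a_{n+1}$ from the leading-coefficient formula $\Norm{P_n}=a_1\cdots a_n\,\mu(\bbR)^{1/2}$ rather than from the monic recursion, but this is the same fact.
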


\begin{proof} Inductively, one has that $p_n(x) =(a_1\dots a_n)^{-1} x^n + \dots$, so
\begin{equation}  \lb{3.10x}
\Norm{P_n} = a_1 \dots a_n \mu(\bbR)^{1/2}
\end{equation}
and thus,
\begin{equation}  \lb{3.10y}
\f{\Norm{P_{n+1}}}{\Norm{P_n}} = a_{n+1}
\end{equation}

Moreover, since $M_z^*=M_z$ for OPRL and $[A,B]^*=-[A^*,B^*]$, we get from \eqref{3.8x} that
\begin{equation}  \lb{3.11x}
\pi_n [M_z,\pi_n] (1-\pi_n) = -a_{n+1} \jap{p_{n+1}, \dott}p_n
\end{equation}
\eqref{3.5}--\eqref{3.7}, \eqref{3.10x}, and \eqref{3.11x} imply \eqref{3.9x} which, as noted,
implies \eqref{3.1}.
\end{proof}

For OPUC, the natural object is (note $M_z M_z^* =M_z^* M_z =1$)
\begin{equation}  \lb{3.12}
B_n = \pi_n -M_z \pi_n M_z^* =-[M_z, \pi_n] M_z^*
\end{equation}

\begin{theorem}[CD Formula for OPUC]\lb{T3.3} For OPUC, we have
\begin{equation}  \lb{3.13}
\pi_n -M_z \pi_n M_z^* =\jap{\varphi_{n+1}^*, \dott} \varphi_{n+1}^* -
\jap{\varphi_{n+1}, \dott} \varphi_{n+1}
\end{equation}
and \eqref{3.2} holds.
\end{theorem}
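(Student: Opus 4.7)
The plan is to parallel the OPRL strategy but exploit that, on $\partial\bbD$, $M_z$ is unitary, so $M_z\pi_n M_z^*$ is automatically an orthogonal projection. The key observation is that $B_n = \pi_n - M_z\pi_n M_z^*$ is then a difference of two orthogonal projections whose ranges are both codimension-one subspaces of $\calP_{n+1} := \spann\{1,z,\dots,z^{n+1}\}$, so $B_n$ has rank at most two and can be read off by identifying the two missing directions.

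Concretely, $\ran(\pi_n) = \calP_n$, while $\ran(M_z\pi_n M_z^*) = M_z\calP_n = \spann\{z,\dots,z^{n+1}\}$. The orthogonal complement of $\calP_n$ inside $\calP_{n+1}$ is spanned by $\varphi_{n+1}$ by construction. I claim the complement of $M_z\calP_n$ inside $\calP_{n+1}$ is spanned by $\varphi_{n+1}^*$. Using $\bar z = z^{-1}$ on $\partial\bbD$, for $1 \leq k \leq n+1$,
\[
\jap{z^k, \varphi_{n+1}^*} = \int z^{-k} z^{n+1}\, \ol{\varphi_{n+1}(z)}\, d\mu(z) = \ol{\jap{z^{n+1-k},\varphi_{n+1}}} = 0,
\]
while $\Norm{\varphi_{n+1}^*} = \Norm{\varphi_{n+1}} = 1$ since $\abs{\varphi_{n+1}^*(z)} = \abs{\varphi_{n+1}(z)}$ on $\partial\bbD$, proving the claim.

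Next I would check $B_n$ annihilates $\calP_{n+1}^\perp$: $\pi_n$ obviously does, and for $f \perp \calP_{n+1}$ one has $\jap{z^k,M_z^*f} = \jap{z^{k+1},f} = 0$ for $0\leq k\leq n$, so $\pi_n M_z^* f = 0$. On $\calP_{n+1}$, the two projections act as $I - \jap{\varphi_{n+1},\dott}\varphi_{n+1}$ and $I - \jap{\varphi_{n+1}^*,\dott}\varphi_{n+1}^*$, so their difference, extended by zero to $\calP_{n+1}^\perp$, is precisely the right-hand side of \eqref{3.13}; this establishes \eqref{3.13} on all of $L^2(\partial\bbD,d\mu)$.

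Finally, to deduce \eqref{3.2} I would pass to integral kernels. Using \eqref{1.10} and $M_z^* = M_{\bar z}$ on $\partial\bbD$, the kernel of $M_z\pi_n M_z^*$ is $\bar z\zeta K_n(z,\zeta)$, so $B_n$ has kernel $(1-\bar z\zeta)K_n(z,\zeta)$; the rank-two operator on the right of \eqref{3.13} has kernel $\ol{\varphi_{n+1}^*(z)}\,\varphi_{n+1}^*(\zeta) - \ol{\varphi_{n+1}(z)}\,\varphi_{n+1}(\zeta)$, and dividing by $1-\bar z\zeta$ (valid for $\bar z\zeta \neq 1$) yields \eqref{3.2}. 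The only real obstacle is the identification of $\varphi_{n+1}^*$ as the missing orthogonal direction---this relies squarely on the feature $\bar z = z^{-1}$ singled out in the discussion after Theorem~\ref{T3.1}, and is exactly what makes OPUC admit a clean CD formula alongside OPRL.
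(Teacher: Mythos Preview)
Your proof is correct and takes a somewhat different route from the paper's. Both arguments restrict attention to $\calP_{n+1}$ and single out $\varphi_{n+1}$ and $\varphi_{n+1}^*$ as the two distinguished directions (the paper also notes that $\ran(B_n)$ sits inside $\{z,\dots,z^n\}^\perp\cap\ran(\pi_{n+1})$, spanned by this pair). The divergence is in how the identity is then pinned down. You use directly that $M_z\pi_nM_z^*$ is itself an orthogonal projection---onto $M_z\calP_n$---so that on $\calP_{n+1}$ each of $\pi_n$ and $M_z\pi_nM_z^*$ is ``identity minus a rank-one projection,'' and \eqref{3.13} drops out by subtraction. The paper instead argues that $B_n$ and the right-hand side are both rank-two, selfadjoint, trace-zero operators with the same two-dimensional range, and then verifies agreement on the single test vector $\eta=z\varphi_n$ via the Szeg\H{o} recursion \eqref{1.7b}, invoking the lemma that a rank-$\leq 1$ selfadjoint trace-zero operator must vanish. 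Your argument is more transparent and avoids both the recursion computation and the rank/trace device; the paper's, in exchange, showcases a reusable linear-algebra trick and ties the formula explicitly back to the recursion. One small point worth making explicit: your final passage from the operator identity on $L^2(\partial\bbD,d\mu)$ to the polynomial identity \eqref{3.2} for arbitrary $z,\zeta$ uses that both sides are polynomials in $\bar z$ and $\zeta$ of bounded degree together with the nontriviality of $\mu$; this is routine (the paper makes the analogous move without comment in the OPRL case), but it is what licenses dividing by $1-\bar z\zeta$ off the support of $\mu$.
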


\begin{proof} $B_n$ is selfadjoint so $\ran(B_n) = \ker(B_n)^\perp$. Clearly, $\ran(B_n)\subset
\ran(\pi_n) + M_z [\ran(\pi_n)]=\ran(\pi_{n+1})$ and $B_n z^\ell =0$ for $\ell=1, \dots, n$, so
$\ran(B_n) =\{z,z^2, \dots, z^n\}^\perp\cap\ran(\pi_{n+1})$ is spanned by $\varphi_{n+1}$ and
$\varphi_{n+1}^*$. Thus, both $B_n$ and the right side of \eqref{3.13} are rank two selfadjoint
operators with the same range and both have trace $0$. Thus, it suffices to find a single vector
$\eta$ in the span of $\varphi_{n+1}$ and $\varphi_{n+1}^*$ with $B_n\eta =(\text{RHS of \eqref{3.13}})
\eta$, since a rank at most one selfadjoint operator with zero trace is zero!

We will take $\eta=z\varphi_n$, which lies in the span since, by \eqref{1.7b} and its ${}^*\,$,
\begin{equation}  \lb{3.14}
\rho_n \varphi_{n+1} =z\varphi_n -\bar\alpha_n \varphi_n^*  \qquad
\rho_n \varphi_{n+1}^* =\varphi_n^* -\alpha_n z \varphi_n
\end{equation}

By \eqref{3.11x}, \eqref{3.12}, and
\begin{equation}  \lb{3.15}
\Norm{\Phi_n} = \rho_0 \dots \rho_{n-1} \mu(\partial\bbD)
\end{equation}
we have that
\begin{align}
B_n (z\varphi_n) &= [\pi_n, M_z] \varphi_n \notag \\
&= - (1-\pi_n) M_z \pi_n \varphi_n \notag \\
&= -\rho_n \varphi_{n+1} \lb{3.16}
\end{align}

On the other hand, $\varphi_{n+1}^* \perp\{z,\dots, z^{n+1}\}$, so
\[
\jap{\varphi_{n+1}^*, z\varphi_n} = 0
\]
and, by \eqref{3.14},
\begin{align*}
\jap{\varphi_{n+1}, z\varphi_n} &= \rho_n \jap{\varphi_{n+1},\varphi_{n+1}} +
\bar\alpha_n \jap{\varphi_{n+1}, \varphi_n^*} \\
&= \rho_n
\end{align*}
so
\begin{equation}  \lb{3.17}
[\text{LHS of \eqref{3.13}}] z\varphi_n = -\rho_n \varphi_{n+1}
\end{equation}
\end{proof}

Note that \eqref{3.2} implies
\begin{align*}
K_{n+1}(z,\zeta) &= K_n (z,\zeta) + \ol{\varphi_{n+1}(z)}\, \varphi_{n+1}(\zeta)
\biggl( \f{1-\bar z\zeta}{1-\bar z\zeta}\biggr) \\
&= \f{\ol{\varphi_{n+1}^*(z)}\, \varphi_{n +1}^*(\zeta) - \bar z \zeta\, \ol{\varphi_{n+1}(z)}\,
\varphi_{n+1}(\zeta)}{1-\bar z\zeta}
\end{align*}
so changing index, we get the ``other form'' of the CD formula for OPUC,
\begin{equation}  \lb{3.18}
K_n(z,\zeta) = \f{\ol{\varphi_n(z)^*}\, \varphi_n^*(\zeta) - \ol{z\varphi_n(z)}\, \zeta\varphi_n(\zeta)}
{1-\bar z \varphi}
\end{equation}

We also note that Szeg\H{o} \cite{Szb} derived the recursion relation from the CD formula, so the lack
of a CD formula for general OPs explains the lack of a recursion relation in general.

%%%%%%%%%%%%%%%%%%%%%%%%%%%%%%%%%%%%%%%%%%%%%%%%%%
\section{Zeros of OPRL: Basics Via CD} \lb{s4}
%%%%%%%%%%%%%%%%%%%%%%%%%%%%%%%%%%%%%%%%%%%%%%%%%%

In this section, we will use the CD formula to derive the basic facts about the zeros of OPRL. In the vast
literature on OPRL, we suspect this is known but we don't know where. We were motivated to look for this
by a paper of Wong \cite{Wong}, who derived the basics for zeros of POPUC (paraorthogonal polynomials on
the unit circle) using the CD formula (for other approaches to zeros of POPUCs, see \cite{CMV02,S308}).
We begin with the CD formula on diagonal:

\begin{theorem}\lb{T4.1} For OPRL and $x$ real,
\begin{equation}  \lb{4.1}
\sum_{j=0}^n\, \abs{p_j(x)}^2 = a_{n+1} [p'_{n+1}(x) p_n(x) - p'_n(x) p_{n+1}(x)]
\end{equation}
\end{theorem}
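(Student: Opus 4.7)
The plan is to derive Theorem~4.1 as a direct diagonal limit of the Christoffel--Darboux formula \eqref{3.1}, which was already established as Theorem~\ref{T3.2} in the excerpt. The off-diagonal identity
\[
K_n(z,\zeta) = a_{n+1}\,\frac{\ol{p_{n+1}(z)}\,p_n(\zeta) - \ol{p_n(z)}\,p_{n+1}(\zeta)}{\bar z - \zeta}
\]
holds for $\bar z \neq \zeta$, while the left-hand side extends continuously to a polynomial in $\bar z$ and $\zeta$ on all of $\bbC^2$. So the right-hand side must also extend continuously, and the desired identity should be obtained by letting $(z,\zeta) \to (x,x)$ with $x \in \bbR$.

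Concretely, I would fix $x \in \bbR$ and evaluate $K_n(x,\zeta)$ by letting $\zeta \to x$. Since the Jacobi parameters are real, each $p_j$ has real coefficients; in particular $\ol{p_j(x)} = p_j(x)$, so setting $\bar z = x$ gives
\[
K_n(x,\zeta) = a_{n+1}\,\frac{p_{n+1}(x)\,p_n(\zeta) - p_n(x)\,p_{n+1}(\zeta)}{x - \zeta}.
\]
The numerator vanishes at $\zeta = x$, so L'H\^opital (or, equivalently, the observation that the numerator is divisible by $\zeta - x$ as a polynomial in $\zeta$) yields
\[
\lim_{\zeta \to x} K_n(x,\zeta) = a_{n+1}\bigl[p'_{n+1}(x)\,p_n(x) - p'_n(x)\,p_{n+1}(x)\bigr].
\]
On the other hand, $K_n(x,x) = \sum_{j=0}^n \ol{p_j(x)}\,p_j(x) = \sum_{j=0}^n |p_j(x)|^2$ directly from the definition \eqref{1.8}, which matches the left side of \eqref{4.1}.

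There is essentially no obstacle; the only minor points to watch are (i) justifying that the off-diagonal CD formula passes to the diagonal, which is immediate from the polynomial nature of both sides, and (ii) noting that the complex conjugates in \eqref{1.8} and \eqref{3.1} disappear when evaluating on the real axis at real-coefficient polynomials. Thus the theorem is a one-line consequence of \eqref{3.1}, and no additional machinery beyond Theorem~\ref{T3.2} is required.
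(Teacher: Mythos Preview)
Your proof is correct and essentially identical to the paper's: both take the CD formula \eqref{3.1} on the real axis and pass to the diagonal, the paper by adding and subtracting $p_{n+1}(y)p_n(y)$ to expose difference quotients, you by L'H\^opital/polynomial divisibility---the same computation in slightly different dress.
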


\begin{proof} In \eqref{3.1} with $z=x$, $\zeta =y$ both real, subtract $p_{n+1}(y) p_n(y)$ from both
products on the left and take the limit as $y\to x$.
\end{proof}

\begin{corollary}\lb{C4.2} If $p_n(x_0)=0$ for $x_0$ real, then
\begin{equation}  \lb{4.2}
p_{n+1}(x_0) p'_n(x_0) < 0
\end{equation}
\end{corollary}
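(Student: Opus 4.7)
The plan is to specialize the diagonal Christoffel--Darboux identity of Theorem~\ref{T4.1} to the point $x=x_0$. When $p_n(x_0)=0$, the first of the two products on the right-hand side of \eqref{4.1} drops out, leaving
\[
\sum_{j=0}^n \abs{p_j(x_0)}^2 = -a_{n+1}\, p'_n(x_0)\, p_{n+1}(x_0).
\]
The left-hand side is manifestly real and, because $p_0$ is the nonzero constant $\mu(\bbR)^{-1/2}$, strictly positive. Since $a_{n+1}>0$ (as noted in Section~\ref{s1}), dividing through gives the desired sign $p_{n+1}(x_0)p'_n(x_0)<0$.

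I would also remark, as a free byproduct, that neither factor can vanish: if either $p'_n(x_0)=0$ or $p_{n+1}(x_0)=0$, the right-hand side would be zero while the left is positive. In particular, all real zeros of $p_n$ are simple and $p_n,p_{n+1}$ share no common zero, which is the starting point for the standard interlacing/quadrature theory developed later.

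There is really no obstacle here beyond confirming that the constant term $p_0$ is nonzero, which is built into the definition of orthonormal polynomials for a nontrivial measure. The entire corollary is a one-line consequence of Theorem~\ref{T4.1} once one notices that the vanishing of $p_n(x_0)$ kills exactly one of the two terms on the right of \eqref{4.1}; the positivity of the diagonal kernel $K_n(x_0,x_0)$ then forces the remaining term to be negative.
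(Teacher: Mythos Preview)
Your proof is correct and is essentially identical to the paper's: both specialize the diagonal CD identity \eqref{4.1} at $x=x_0$, use $p_n(x_0)=0$ to kill one term, and invoke the strict positivity of $\sum_{j=0}^n |p_j(x_0)|^2$ (from $p_0\neq 0$) together with $a_{n+1}>0$. Your additional remarks about simplicity of zeros and non-common zeros of $p_n,p_{n+1}$ are exactly what the paper extracts next in Theorem~\ref{T4.3}.
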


\begin{proof} The left-hand side of \eqref{4.1} is strictly positive since $p_0(x)=1$.
\end{proof}

\begin{theorem}\lb{T4.3} All the zeros of $p_n(x)$ are real and simple and the zeros of $p_{n+1}$
strictly interlace those of $p_n$. That is, between any successive zeros of $p_{n+1}$ lies exactly
one zero of $p_n$ and it is strictly between, and $p_{n+1}$ has one zero between each successive
zero of $p_n$ and it has one zero above the top zero of $p_n$ and one below the bottom zero of $p_n$.
\end{theorem}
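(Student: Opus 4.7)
The plan is an induction on $n$ establishing simultaneously that $p_n$ has exactly $n$ simple real zeros \emph{and} that the zeros of $p_{n+1}$ strictly interlace those of $p_n$. The base cases $n=0$ (vacuous) and $n=1$ are immediate, since $p_1$ is linear with real coefficients (the recursion \eqref{1.5} has $a_n>0$, $b_n\in\bbR$, and $p_0=1$, so by induction every $p_n$ has real coefficients).

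For the inductive step, assume $p_n$ has $n$ simple real zeros $x_1<\cdots<x_n$. Corollary~\ref{C4.2} applied at each $x_j$ gives
\[
p_{n+1}(x_j)\, p_n'(x_j) < 0,
\]
which already yields two things for free: $p_{n+1}(x_j)\neq 0$, so $p_n$ and $p_{n+1}$ share no zero, and $p_n'(x_j)\neq 0$, confirming simplicity was preserved. Since $\kappa_n>0$ and $p_n(x)=\kappa_n\prod_{i=1}^n(x-x_i)$ with simple real roots, the sign of $p_n'(x_j)$ equals $(-1)^{n-j}$. Hence the sign of $p_{n+1}(x_j)$ is $(-1)^{n-j+1}$, alternating with $j$.

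By the intermediate value theorem, each open interval $(x_j,x_{j+1})$ contains at least one zero of $p_{n+1}$, giving $n-1$ zeros. For the outer intervals, I compare signs at infinity: $p_{n+1}(x)\to+\infty$ as $x\to+\infty$ while $p_{n+1}(x_n)$ is negative, so a zero lies in $(x_n,\infty)$; and $p_{n+1}(x)$ has sign $(-1)^{n+1}$ as $x\to-\infty$ while $p_{n+1}(x_1)$ has sign $(-1)^n$, so a zero lies in $(-\infty,x_1)$. This accounts for $n+1$ zeros of a polynomial of degree $n+1$, so all zeros of $p_{n+1}$ are real and simple and they strictly interlace the zeros of $p_n$, closing the induction.

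There is no real obstacle here: the proof is sign-bookkeeping, IVT, and one invocation of Corollary~\ref{C4.2}. The one place minor care is needed is strictness of the interlacing, which is handed to us by the \emph{strict} inequality in Corollary~\ref{C4.2} (guaranteeing $p_{n+1}$ has no zero at any $x_j$, so the new zeros genuinely separate the old ones rather than coinciding with them).
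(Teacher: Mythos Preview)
Your proof is correct and follows essentially the same approach as the paper's: an induction using Corollary~\ref{C4.2} to force sign alternation of $p_{n+1}$ at the zeros of $p_n$, then counting via the intermediate value theorem and the behavior at $\pm\infty$. The only cosmetic difference is that you compute the sign of $p_n'(x_j)$ explicitly from the product formula, whereas the paper argues more qualitatively that $p_n'$ alternates sign at successive simple zeros; the content is identical.
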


\begin{proof} By \eqref{4.2}, $p_n(x_0) =0\Rightarrow p'_n(x_0)\neq 0$, so zeros are simple, which
then implies that the sign of $p'_n$ changes between its successive zeros. By \eqref{4.2}, the sign
of $p_{n+1}$ thus changes between zeros of $p_n$, so $p_{n+1}$ has an odd number of zeros between
zeros of $p_n$.

$p_1$ is a real polynomial, so it has one real zero. For $x$ large, $p_n(x) >0$ since the leading
coefficient is positive. Thus, $p'_n(x_0) >0$ at the top zero. From \eqref{4.2}, $p_{n+1}(x_0)<0$
and thus, since $p_{n+1}(x)>0$ for $x$ large, $p_{n+1}$ has a zero above the top zero of $p_n$.
Similarly, it has a zero below the bottom zero.

We thus see inductively, starting with $p_1$, that $p_n$ has $n$ real zeros and they interlace
those of $p_{n-1}$.
\end{proof}

We note that Ambroladze \cite{Amb} and then Denisov--Simon \cite{DenS} used properties of the
CD kernel to prove results about zeros (see Wong \cite{Wong} for the OPUC analog); the latter 
paper includes:

\begin{theorem}\lb{T4.4} Suppose $a_\infty =\sup_n a_n <\infty$ and $x_0\in\bbR$ has $d=\dist(x_0,
\sigma(d\mu)) >0$. Let $\delta = d^2/(d+\sqrt{2}\, a_\infty)$. Then at least one of $p_n$ and $p_{n-1}$
has no zeros in $(x_0-\delta, x_0+\delta)$.
\end{theorem}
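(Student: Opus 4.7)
The plan is to argue by contradiction. Suppose both $p_n$ has a zero $y_n$ and $p_{n-1}$ has a zero $y_{n-1}$ with $s_1 := |y_n - x_0| < \delta$ and $s_2 := |y_{n-1} - x_0| < \delta$. I will squeeze the diagonal kernel $K_{n-1}(x_0, x_0)$ between an upper bound coming from the reproducing property and a lower bound obtained from Christoffel-type test polynomials built out of $p_n$ and $p_{n-1}$; the two estimates will collide at the stated threshold, forcing the inequality on $\delta$.

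For the upper bound, I apply the reproducing formula \eqref{1.11} to the polynomial $z\mapsto K_{n-1}(z,x_0)$ (of degree $n-1$) to obtain $K_{n-1}(x_0,x_0) = \int K_{n-1}(z,x_0)^2 \, d\mu(z)$. Substituting the CD formula from Theorem~\ref{T3.2}, using $|z-x_0| \geq d$ on $\sigma(d\mu)$, and expanding the square with the orthonormality $\int p_j p_k \, d\mu = \delta_{jk}$, only the two diagonal terms survive, giving
\[
K_{n-1}(x_0, x_0) \leq \frac{a_n^2}{d^2}\bigl[p_n(x_0)^2 + p_{n-1}(x_0)^2\bigr] \leq \frac{a_\infty^2}{d^2}\bigl[p_n(x_0)^2 + p_{n-1}(x_0)^2\bigr].
\]

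For the lower bound, I invoke the Christoffel variational principle $1/K_m(x_0,x_0) = \min\{\Norm{Q}_{L^2(d\mu)}^2 : \deg Q \leq m,\, Q(x_0)=1\}$, itself a consequence of the Schwarz inequality applied to the reproducing formula. Testing with $Q(x) = (x_0-y_n)p_n(x)/[p_n(x_0)(x-y_n)]$, a polynomial of degree $n-1$ with $Q(x_0)=1$ (the edge case $p_n(x_0)=0$ is handled by the variant $Q(x)=p_n(x)/[p_n'(x_0)(x-x_0)]$, which gives an even stronger estimate), and using $|x-y_n| \geq d - s_1 > d - \delta$ on $\sigma(d\mu)$, I get $K_{n-1}(x_0, x_0) \geq p_n(x_0)^2 (d-\delta)^2/\delta^2$. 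Running the analogous argument with $p_{n-1}(x)/(x-y_{n-1})$ bounds $K_{n-2}(x_0,x_0)$, and since $K_{n-1}(x_0,x_0) \geq K_{n-2}(x_0,x_0)$ this yields $K_{n-1}(x_0,x_0) \geq p_{n-1}(x_0)^2(d-\delta)^2/\delta^2$. Averaging the two inequalities produces
\[
K_{n-1}(x_0,x_0) \geq \frac{\bigl(p_n(x_0)^2 + p_{n-1}(x_0)^2\bigr)(d-\delta)^2}{2\delta^2}.
\]

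Combining this with the upper bound and noting $p_n(x_0)^2 + p_{n-1}(x_0)^2 > 0$ (otherwise the three-term recurrence \eqref{1.5} forces $p_j(x_0)=0$ for all $j$, contradicting $p_0\equiv 1$), cancellation gives $(d-\delta)^2/(2\delta^2) \leq a_\infty^2/d^2$, which rearranges to $\delta \geq d^2/(d+\sqrt{2}\,a_\infty)$; the strict inequalities coming from $s_i < \delta$ propagate to a strict contradiction at the critical value. The main obstacle will be nailing down the constant $\sqrt{2}$: it arises specifically from \emph{averaging} (rather than taking the maximum of) the two lower bounds on $K_{n-1}(x_0,x_0)$, which is what matches the combined coefficient $1/2$ on the left against the coefficient $1$ of $p_n(x_0)^2+p_{n-1}(x_0)^2$ on the right of the upper bound.
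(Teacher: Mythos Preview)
The paper does not actually prove Theorem~\ref{T4.4}; it merely quotes the result from Denisov--Simon \cite{DenS}, so there is no in-paper proof to compare against. Your argument is correct and is essentially the proof given in \cite{DenS}: the upper bound you obtain is exactly the identity \eqref{11.9x} (shifted in index and combined with $|x-x_0|\geq d$ on $\sigma(d\mu)$), and the lower bound via the Christoffel variational principle with the trial polynomial $p_n(x)/(x-y_n)$ is the key device of that paper. The edge case $p_n(x_0)=0$ needs no separate treatment, since then the desired inequality $K_{n-1}(x_0,x_0)>p_n(x_0)^2(d-\delta)^2/\delta^2=0$ is automatic; and your observation that averaging the two lower bounds (rather than taking the maximum) is what produces the constant $\sqrt{2}$ is exactly right.
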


They also have results about zeros near isolated points of $\sigma(d\mu)$.

%%%%%%%%%%%%%%%%%%%%%%%%%%%%%%%%%%%%%%%%%%%%%%%%%%%%%%%%%%%%%%%%
\section{The CD Kernel and Formula for MOPs} \lb{s5}
%%%%%%%%%%%%%%%%%%%%%%%%%%%%%%%%%%%%%%%%%%%%%%%%%%%%%%%%%%%%%%%%

Given an $\ell\times\ell$ matrix-valued  measure, there is a rich structure of matrix OPs (MOPRL
and MOPUC). A huge literature is surveyed and extended in \cite{DPS}. In particular, the CD kernel and
CD formula for MORL are discussed in Sections~2.6 and 2.7, and for MOPUC in Section~3.4.

There are two ``inner products,'' maps from $L^2$ matrix-valued functions to matrices,
$\ang{\dott,\dott}_\Rt$ and $\ang{\dott,\dott}_\Lt$. The $\Rt$ for right comes from the form
of scalar homogeneity, for example,
\begin{equation}  \lb{5.1}
\ang{f,gA}_\Rt = \ang{f,g}_\Rt A
\end{equation}
but $\ang{f,Ag}_\Rt$ is not related to $\ang{f,g}_\Rt$.

There are two normalized OPs, $p_j^\Rt(x)$ and $p_j^\Lt(x)$, orthonormal in $\ang{\dott,\dott}_\Rt$
and $\ang{\dott,\dott}_\Lt$, respectively, but a single CD kernel (for $z,w$ real and ${}^\dagger$ is
matrix adjoint),
\begin{align}
K_n(z,w) &= \sum_{k=0}^n p_k^\Rt(z) p_k^\Rt(w)^\dagger  \lb{5.2} \\
&=\sum_{k=0}^n p_k^\Lt (z)^\dagger p_k^\Lt(w)  \lb{5.3}
\end{align}
One has that
\begin{equation}  \lb{5.4}
\ang{K_n(\dott,z), f(\dott)}_\Rt = (\pi_n f)(z)
\end{equation}
where $\pi_n$ is the projection in the $\tr(\ang{\dott,\dott}_\Rt)$ inner product to polynomials of
degree $n$.

In \cite{DPS}, the CD formula is proven using Wronskian calculations. We note here that the
commutator proof we give in Section~\ref{s3} extends to this matrix case.

Within the Toeplitz matrix literature community, a result equivalent to the CD formula is
called the Gohberg--Semencul formula; see \cite{BASha,Fuh,GoKr,GoSe,KVM,Tren64,Tren65}.

%%%%%%%%%%%%%%%%%%%%%%%%%%%%%%%%%%%%%%%%%
\section{Gaussian Quadrature} \lb{s6}
%%%%%%%%%%%%%%%%%%%%%%%%%%%%%%%%%%%%%%%%%

Orthogonal polynomials allow one to approximate integrals over a measure $d\mu$ on $\bbR$
by certain discrete measures. The weights in these discrete measures depend on $K_n(x,x)$.
Here we present an operator theoretic way of understanding this.

Fix $n$ and, for $b\in\bbR$, let $J_{n;F}(b)$ be the $n\times n$ matrix
\begin{equation}  \lb{6.1}
J_{n;F}(b) = \begin{pmatrix}
b_1 & a_1 & 0 \\
a_1 & b_2 & a_2 \\
0 & a_2 & b_3 \\
{} & {} & {} & \ddots \\
{} & {} & {} & {} & b_n +b
\end{pmatrix}
\end{equation}
(i.e., we truncate the infinite Jacobi matrix and change only the corner matrix element $b_n$ to
$b_n+b$).

Let $\ti x_j^{(n)}(b)$, $j=1, \dots, n$, be the eigenvalues of $J_{n;F}(b)$ labelled by $\ti x_1
< \ti x_2 < \dots$. (We shall shortly see these eigenvalues are all simple.) Let $\ti\varphi_j^{(n)}$
be the normalized eigenvectors with components $[\ti\varphi_j^{(n)}(b)]_\ell$, $\ell=1, \dots, n$,
and define
\begin{equation}  \lb{6.2}
\ti\lambda_j^{(n)}(b) = \abs{[\ti\varphi_j^{(n)}(b)]_1}^2
\end{equation}
so that if $e_1$ is the vector $(1\, 0 \dots 0)^t$, then
\begin{equation}  \lb{6.3}
\sum_{j=1}^n \ti\lambda_j^{(n)}(b) \delta_{\ti x_j^{(n)}(b)}
\end{equation}
is the spectral measure for $J_{n;F}(b)$ and $e_1$, that is,
\begin{equation}  \lb{6.4}
\jap{e_1, J_{n;F}(b)^\ell e_1} = \sum_{j=1}^n \ti\lambda_j^{(n)} (b) \ti x_j^{(n)}(b)^\ell
\end{equation}
for all $\ell$. We are going to begin by proving an intermediate quadrature formula:

\begin{theorem}\lb{T6.1} Let $\mu$ be a probability measure. For any $b$ and any $\ell =0,1,
\dots,2n-2$,
\begin{equation}  \lb{6.5}
\int x^\ell\, d\mu = \sum_{j=1}^n \ti\lambda_j^{(n)}(b) \ti x_j^{(n)}(b)^\ell
\end{equation}
If $b=0$, this holds also for $\ell=2n-1$.
\end{theorem}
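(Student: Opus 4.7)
The strategy is to recognize both sides of \eqref{6.5} as matrix elements of powers of a Jacobi operator in the state $e_1$, and then show those matrix elements agree up to the advertised degree.

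First I would use the Jacobi recursion \eqref{1.5} to identify the full (infinite) Jacobi matrix $J$ with the operator $M_x$ on $L^2(\bbR,d\mu)$ under the unitary that sends $p_{k-1}\mapsto e_k$; since $p_0\equiv 1$, this gives the basic identity
\[
\int x^\ell \, d\mu \;=\; \langle p_0, M_x^\ell p_0\rangle_{L^2(d\mu)} \;=\; \langle e_1, J^\ell e_1\rangle_{\ell^2}
\]
for every $\ell\ge 0$. On the other hand, \eqref{6.4} already expresses the right side of \eqref{6.5} as $\langle e_1, J_{n;F}(b)^\ell e_1\rangle$. So the whole theorem reduces to comparing matrix elements of $J^\ell$ and $J_{n;F}(b)^\ell$ in the state $e_1$.

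Next I would let $P_n$ denote the orthogonal projection of $\ell^2$ onto $\spann(e_1,\ldots,e_n)$ and observe the operator identity
\[
J_{n;F}(b) \;=\; P_n J P_n + b\,\langle e_n,\dott\rangle e_n.
\]
Because the Jacobi matrix is tridiagonal, $J^k e_1$ lies in $\spann(e_1,\ldots,e_{k+1})$. I would prove by induction on $k$ that
\[
J_{n;F}(b)^{k} e_1 \;=\; J^{k} e_1 \qquad (0\le k\le n-1),
\]
independently of $b$: the step uses that for $k\le n-2$ the vector $J^k e_1$ sits in $\spann(e_1,\ldots,e_{n-1})$, so the rank-one perturbation $b\langle e_n,\dott\rangle e_n$ annihilates it, and $P_n$ acts as the identity on the next power since $J^{k+1}e_1 \in \spann(e_1,\ldots,e_n)$.

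With this, the case $\ell\le 2n-2$ follows by writing $\ell=a+b'$ with $a,b'\le n-1$ and using self-adjointness of both $J$ and $J_{n;F}(b)$:
\[
\langle e_1, J^\ell e_1\rangle = \langle J^{a}e_1, J^{b'}e_1\rangle = \langle J_{n;F}(b)^{a}e_1, J_{n;F}(b)^{b'}e_1\rangle = \langle e_1, J_{n;F}(b)^\ell e_1\rangle.
\]
The main (and only real) obstacle is the borderline case $\ell=2n-1$, which forces the split $(n-1)+n$ and hence requires handling $J^n e_1$, which exits $\ran(P_n)$. Here I would use $b=0$ so that $J_{n;F}(0)=P_nJP_n$, write
\[
\langle e_1, J^{2n-1} e_1\rangle = \langle J^{n-1}e_1, J\cdot J^{n-1}e_1\rangle,
\]
and insert $P_n$ on the right (justified because $J^{n-1}e_1\in\ran(P_n)$) to replace the inner $J$ by $P_nJP_n=J_{n;F}(0)$; combined with $J^{n-1}e_1=J_{n;F}(0)^{n-1}e_1$ from the induction, this yields $\langle e_1, J_{n;F}(0)^{2n-1} e_1\rangle$, completing the proof. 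Note that the argument fails for $b\ne 0$ at $\ell=2n-1$ precisely because the rank-one perturbation then contributes through the $e_n$-component of $J^{n-1}e_1$.
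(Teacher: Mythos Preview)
Your proof is correct but takes a genuinely different route from the paper's. The paper observes that $d\mu$ and the spectral measure of $J_{n;F}(b)$ share the Jacobi parameters $\{a_j,b_j\}_{j=1}^{n-1}$, hence share the orthonormal polynomials $p_0,\dots,p_{n-1}$; it then uses the orthogonality relations $\int x^k p_j\,d\mu=0$ ($k<j\le n-1$) and the normalization $\int p_{n-1}^2\,d\mu=1$ to inductively match moments through order $2n-2$, and finally invokes $\int x\,p_{n-1}^2\,d\mu=b_n$ to get $\ell=2n-1$ when $b=0$. You bypass the orthogonal polynomials of the truncated measure entirely and work directly with matrix elements, using only tridiagonality to show $J_{n;F}(b)^k e_1=J^k e_1$ for $k\le n-1$, and then splitting the exponent via self-adjointness. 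Your argument is more operator-theoretic---very much in the spirit the paper advocates elsewhere---and more self-contained; the paper's argument is a bit more conceptual in that it makes transparent why the cutoff is at $2n-2$ (sharing $n$ orthonormal polynomials pins down $2n-1$ moments) and why $b=0$ buys exactly one more (it fixes $b_n$ too). A cosmetic point: your choice of $b'$ for one summand in the split $\ell=a+b'$ clashes with the perturbation parameter $b$ already in play.
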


\begin{proof} For any measure, $\{a_j,b_j\}_{j=1}^{n-1}$ determine $\{p_j\}_{j=0}^{n-1}$, and
moreover,
\begin{equation}  \lb{6.6}
\int x\abs{p_{n-1}(x)}^2 \, d\mu = b_n
\end{equation}
If a measure has finite support with at least $n$ points, one can still define $\{p_j\}_{j=0}^{n-1}$,
Jacobi parameters $\{a_j,b_j\}_{j=1}^{n-1}$, and $b_n$ by \eqref{6.6}.

$d\mu$ and the measure, call it $d\ti\mu_1^{(n)}$, of \eqref{6.3} have the same Jacobi parameters
$\{a_j,b_j\}_{j=1}^{n-1}$, so the same $\{p_j\}_{j=0}^{n-1}$, and thus by
\begin{equation}  \lb{6.7}
\int x^k p_j(x)\, d\mu = 0 \qquad k=0,1,\dots, j-1; \, j=1, \dots, n-1
\end{equation}
we inductively get \eqref{6.5} for $\ell=0,1,2,\dots, 2n-3$. Moreover,
\begin{equation}  \lb{6.8}
\int p_{n-1}(x)^2\, d\mu =1
\end{equation}
determines inductively \eqref{6.5} for $\ell = 2n-2$. Finally, if $b=0$, \eqref{6.6} yields \eqref{6.5}
for $\ell =2n-1$.
\end{proof}

As the second step, we want to determine the $\ti x_j^{(n)}(b)$ and $\ti\lambda_j^{(n)}(b)$.

\begin{theorem}\lb{T6.2} Let $K_{n;F}=\pi_{n-1} M_z \pi_{n-1} \restriction \ran(\pi_{n-1})$ for
a general finite moment measure, $\mu$, on $\bbC$. Then
\begin{equation}  \lb{6.9}
\det_{\ran(\pi_{n-1})} (z\bdone -K_{n;F}) = X_n(z)
\end{equation}
\end{theorem}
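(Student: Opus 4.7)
The plan is to identify the $n$-dimensional space $\ran(\pi_{n-1})$ (polynomials of degree at most $n-1$) with the quotient ring $\bbC[z]/(X_n(z))$ using their common standard representatives, and to show that under this identification $K_{n;F}$ becomes multiplication by $z$ in the quotient. Since that operator is the companion operator of the monic polynomial $X_n$, its characteristic polynomial is exactly $X_n$, which is \eqref{6.9}.

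The key step is to verify that for any $q\in\ran(\pi_{n-1})$, the polynomial $K_{n;F}q = \pi_{n-1}(zq)$ coincides with the reduction of $zq$ modulo $X_n$. If $\deg q \le n-2$, this is immediate: $zq$ already has degree at most $n-1$, so both $\pi_{n-1}(zq)$ and the reduction simply return $zq$. If $\deg q = n-1$ with leading coefficient $c$, write $zq = cX_n + r$ with $\deg r \le n-1$; this decomposition exists and is unique because of the orthogonal splitting $\ran(\pi_n) = \ran(\pi_{n-1}) \oplus \bbC\cdot X_n$, which follows from $X_n \perp \ran(\pi_{n-1})$ together with a dimension count ($\dim\ran(\pi_n) = n+1$, $\dim\ran(\pi_{n-1}) = n$). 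Since $X_n \perp \ran(\pi_{n-1})$, we obtain $\pi_{n-1}(zq) = r$, which is exactly the representative of $zq$ modulo $X_n$.

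With this identification in hand, I would finish by invoking the standard fact that multiplication by $z$ on $\bbC[z]/(X_n)$ has characteristic polynomial $X_n$. A clean way to see this: the class of $1$ is cyclic, since $\{1,z,\dots,z^{n-1}\}$ is a basis of the quotient, so the minimal and characteristic polynomials coincide; the minimal polynomial divides $X_n$ because evaluating $X_n$ at the multiplication operator and applying to $1$ yields $X_n(z) \equiv 0$ in the quotient, while no polynomial of degree less than $n$ can annihilate the operator, since when applied to $1$ it would produce a nonzero class. The main conceptual point is the orthogonal-projection step, which rests on $X_n$ spanning the orthogonal complement of $\ran(\pi_{n-1})$ inside $\ran(\pi_n)$; once granted, the remainder is the classical companion-matrix calculation and requires no reference to the particular measure $\mu$.
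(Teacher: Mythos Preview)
Your proof is correct but takes a different route from the paper's. The paper argues directly at the level of eigenvalues: for each zero $z_0$ of $X_n$ of multiplicity $\ell$, it sets $\varphi = X_n(z)/(z-z_0)^\ell \in \ran(\pi_{n-1})$ and checks that $(K_{n;F}-z_0)^j\varphi = (z-z_0)^j\varphi \neq 0$ for $j<\ell$ while $(K_{n;F}-z_0)^\ell\varphi = \pi_{n-1}X_n = 0$, so $z_0$ has algebraic multiplicity at least $\ell$; summing over the zeros forces equality, and both sides of \eqref{6.9} are monic of degree $n$. Your argument instead identifies $K_{n;F}$ globally with multiplication by $z$ on $\bbC[z]/(X_n)$ via the orthogonal splitting $\ran(\pi_n)=\ran(\pi_{n-1})\oplus\bbC X_n$, and then cites the companion-matrix fact. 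Both rest on the same underlying observation that $\pi_{n-1}(zq)$ is reduction of $zq$ modulo $X_n$; the paper exploits it locally to build Jordan chains, while you package it as an algebra isomorphism. Your version is slightly more structural and avoids any case analysis on the multiplicities of the roots; the paper's version has the minor advantage of exhibiting the generalized eigenvectors explicitly, which can be useful elsewhere.
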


\begin{proof} Suppose $X_n(z)$ has a zero of order $\ell$ at $z_0$. Let $\varphi = X_n(z)/(z-z_0)^\ell$.
Then, in $\ran(\pi_n)$,
\begin{align}
(K_{n;F}-z_0)^j \varphi &\neq 0 \qquad j=0,1,\dots, \ell-1  \lb{6.10} \\
(K_{n;F}-z_0)^\ell \varphi &= 0 \lb{6.11}
\end{align}
since $(M_z-z_0)^\ell \varphi = X_n(z)$ and $\pi_{n-1} X_n =0$. Thus, $z_0$ is an eigenvalue of
$K_{n;F}$ of algebraic multiplicity at least $\ell$. Since $X_n(z)$ has $n$ zeros counting multiplicity,
this accounts for all the roots, so \eqref{6.9} holds because both sides of monic polynomials of degree
$n$ with the same roots.
\end{proof}

\begin{corollary} \lb{C6.3} We have for OPRL
\begin{equation}  \lb{6.5x}
\det(z-J_{n;F}(b)) = P_n(z) -b P_{n-1}(z)
\end{equation}
The eigenvalues $\ti x_j^{(n)}(b)$ are all simple and obey for $0<b<\infty$ and $j=1, \dots, n$
{\rm{(}}with $\ti x_{n+1}(0)=\infty${\rm{)}},
\begin{equation}  \lb{6.6x}
\ti x_j^{(n)}(0) < \ti x_j^{(n)}(b) < \ti x_{j+1}^{(n)}(0)
\end{equation}
and for $-\infty < b < 0$ and $j=1, \dots, n$ {\rm{(}}with $\ti x_{n-1}(0) =-\infty${\rm{)}},
\begin{equation} \lb{6.7x}
\ti x_{j-1}^{(n)}(0) < \ti x_j^{(n)}(b) < \ti x_j^{(n)}(0)
\end{equation}
\end{corollary}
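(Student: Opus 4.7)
The plan for Corollary~\ref{C6.3} splits into three stages: the characteristic polynomial identity \eqref{6.5x}, the simplicity of the eigenvalues, and the interlacing inequalities \eqref{6.6x}--\eqref{6.7x}. All three follow by combining Theorem~\ref{T6.2} with elementary facts about symmetric tridiagonal matrices with nonzero off-diagonal entries, plus a secular-equation analysis of the rank-one perturbation $J_{n;F}(b) = J_{n;F}(0) + b\, e_n e_n^t$.

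For \eqref{6.5x}, I first note that the three-term recurrence \eqref{1.5} gives $\jap{p_i, M_z p_j}$ for $0 \leq i,j \leq n-1$ as the matrix entries of $J_{n;F}(0)$. Thus $J_{n;F}(0)$ represents $K_{n;F} = \pi_{n-1} M_z \pi_{n-1}\restriction\ran(\pi_{n-1})$ in the orthonormal basis $\{p_0,\dots,p_{n-1}\}$, so Theorem~\ref{T6.2} yields $\det(z - J_{n;F}(0)) = P_n(z)$. Then the matrix determinant lemma (or cofactor expansion along the last row of $z - J_{n;F}(b)$) gives
\[
\det(z - J_{n;F}(b)) = \det(z - J_{n;F}(0)) - b \det(z - J_{n-1;F}(0)) = P_n(z) - b P_{n-1}(z),
\]
since the $(n,n)$-minor of $z - J_{n;F}(0)$ is $z - J_{n-1;F}(0)$, whose determinant is $P_{n-1}(z)$ by the same identification at level $n-1$.

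For simplicity of the eigenvalues of $J_{n;F}(b)$, any eigenvector $\psi = (\psi_1,\dots,\psi_n)^t$ at eigenvalue $\lambda$ is determined up to scalar by $\psi_1$ via the forward recurrence $a_j \psi_{j+1} = (\lambda - b_j)\psi_j - a_{j-1}\psi_{j-1}$ (with the convention $\psi_0 := 0$ and $a_0 := 0$), using only that the off-diagonals $a_1,\dots,a_{n-1}$ are strictly positive. The same recurrence, run backward from $\psi_n$, shows that no eigenvector can have $\psi_n = 0$: that would force $\psi_{n-1} = \psi_{n-2} = \cdots = \psi_1 = 0$. This nonvanishing will be used next.

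For \eqref{6.6x}--\eqref{6.7x}, I expand in the eigenbasis of $J_{n;F}(0)$ and set $c_j := \abs{[\ti\varphi_j^{(n)}(0)]_n}^2 > 0$. The eigenvalue equation for $J_{n;F}(b)$ reduces to the secular equation
\[
f(\lambda) \;:=\; \sum_{j=1}^n \f{c_j}{\ti x_j^{(n)}(0) - \lambda} \;=\; -\f{1}{b}.
\]
Since $f'(\lambda) = \sum_j c_j/(\ti x_j^{(n)}(0) - \lambda)^2 > 0$ off the poles, $f$ is strictly increasing on each component of $\bbR\setminus\{\ti x_1^{(n)}(0),\dots,\ti x_n^{(n)}(0)\}$; it maps each bounded component $(\ti x_j^{(n)}(0),\ti x_{j+1}^{(n)}(0))$ bijectively onto $\bbR$, the component $(\ti x_n^{(n)}(0),\infty)$ onto $(-\infty,0)$, and $(-\infty,\ti x_1^{(n)}(0))$ onto $(0,\infty)$. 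For $b>0$ one has $-1/b < 0$, so there is exactly one root in each $(\ti x_j^{(n)}(0),\ti x_{j+1}^{(n)}(0))$ for $j=1,\dots,n-1$ together with one in $(\ti x_n^{(n)}(0),\infty)$, giving \eqref{6.6x}; for $b<0$ the symmetric count gives \eqref{6.7x}. The main obstacle is just bookkeeping in this monotonicity/counting step; the positivity $c_j > 0$ comes from the nonvanishing last component established above.
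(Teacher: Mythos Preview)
Your proof is correct and follows essentially the same route as the paper. For \eqref{6.5x} both you and the paper invoke Theorem~\ref{T6.2} for $b=0$ and then a minor/cofactor expansion to handle general $b$; for the interlacing, the paper simply says ``eigenvalue perturbation theory or the arguments in Section~\ref{s4},'' and your secular-equation analysis of the rank-one perturbation is precisely the first of those alternatives, worked out in full (with the simplicity argument supplying the needed $c_j>0$).
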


\begin{proof} \eqref{6.5x} for $b=0$ is just \eqref{6.9}. Expanding in minors shows the determinant of
$(z-J_{n;F}(b))$ is just the value at $b=0$ minus $b$ times the $(n-1)\times (n-1)$ determinant, proving
\eqref{6.5x} in general.

The inequalities in \eqref{6.6x}/\eqref{6.7x} follow either by eigenvalue perturbation theory or by
using the arguments in Section~\ref{s4}.
\end{proof}

In fact, our analysis below proves that for $0<b<\infty$,
\begin{equation}  \lb{6.8x}
\ti x_j^{(n)}(0) < \ti x_j^{(n)}(b) < \ti x_j^{(n-1)}(0)
\end{equation}

The recursion formula for monic OPs proves that $p_j(\ti x_j(b))$ is the unnormalized eigenvector for
$J_{n;F}(b)$. $K_{n-1} (\ti x_j(b), \ti x_j(b))^{1/2}$ is the normalization constant, so since $p_0
\equiv 1$ (if $\mu(\bbR)=1$):

\begin{proposition}\lb{P6.4} If $\mu(\bbR)=1$, then
\begin{equation}  \lb{6.9x}
\lambda_j^{(n)}(b) = (K_{n-1} (\ti x_j^{(n)}(b), \ti x_j^{(n)}(b)))^{-1}
\end{equation}
\end{proposition}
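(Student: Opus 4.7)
The plan is to identify the eigenvectors of $J_{n;F}(b)$ explicitly in terms of the orthonormal polynomials, then read off the first component of the normalized eigenvector. This is really just formalizing the remark made immediately before the proposition.

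First I would verify that for each $j$, the vector
\[
v_j = (p_0(\ti x_j^{(n)}(b)), p_1(\ti x_j^{(n)}(b)), \dots, p_{n-1}(\ti x_j^{(n)}(b)))^t
\]
is an eigenvector of $J_{n;F}(b)$ with eigenvalue $\ti x_j^{(n)}(b)$. For rows $1,\dots,n-1$ this is immediate from the three-term recurrence \eqref{1.5}. For the bottom row one must check that $a_{n-1} p_{n-2}(\ti x_j) + (b_n + b) p_{n-1}(\ti x_j) = \ti x_j \, p_{n-1}(\ti x_j)$, equivalently that $a_n p_n(\ti x_j) = b \, p_{n-1}(\ti x_j)$; converting to monic polynomials via \eqref{3.10x}--\eqref{3.10y} this is exactly the statement $P_n(\ti x_j) = b P_{n-1}(\ti x_j)$, which by \eqref{6.5x} in Corollary \ref{C6.3} characterizes the eigenvalues $\ti x_j^{(n)}(b)$.

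Second, I would compute the norm of $v_j$: by definition \eqref{1.8b} of the diagonal kernel,
\[
\Norm{v_j}^2 = \sum_{k=0}^{n-1} \abs{p_k(\ti x_j^{(n)}(b))}^2 = K_{n-1}(\ti x_j^{(n)}(b), \ti x_j^{(n)}(b)).
\]
Hence the normalized eigenvector is $\ti\varphi_j^{(n)}(b) = v_j/\Norm{v_j}$, and its first component is
\[
[\ti\varphi_j^{(n)}(b)]_1 = \frac{p_0(\ti x_j^{(n)}(b))}{\sqrt{K_{n-1}(\ti x_j^{(n)}(b), \ti x_j^{(n)}(b))}} = \frac{1}{\sqrt{K_{n-1}(\ti x_j^{(n)}(b), \ti x_j^{(n)}(b))}},
\]
using $p_0 \equiv 1$, which holds because $\mu(\bbR) = 1$. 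Plugging into the definition \eqref{6.2} of $\ti\lambda_j^{(n)}(b)$ yields \eqref{6.9x}.

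No step here is hard; the only thing to be careful about is the boundary row of the eigenvector equation, which is precisely where the perturbation parameter $b$ enters and which is handled by Corollary \ref{C6.3}. The simplicity of $\ti x_j^{(n)}(b)$ asserted in that corollary also ensures that $v_j$ is (up to scalar) the unique eigenvector, so no orthogonalization ambiguity arises.
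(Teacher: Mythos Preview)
Your proof is correct and follows exactly the approach the paper sketches in the sentence preceding the proposition: identify $(p_0(\ti x_j),\dots,p_{n-1}(\ti x_j))^t$ as the unnormalized eigenvector, observe that its norm squared is $K_{n-1}(\ti x_j,\ti x_j)$, and use $p_0\equiv 1$ when $\mu(\bbR)=1$ to read off the first component. You have simply filled in the details the paper omits, in particular the verification of the bottom row of the eigenvector equation via \eqref{6.5x}.
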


Now fix $n$ and $x_0\in\bbR$. Define
\begin{equation}  \lb{6.10x}
b(x_0) = \f{P_n(x_0)}{P_{n-1}(x_0)}
\end{equation}
with the convention $b=\infty$ if $P_{n-1}(x_0)=0$. Define for $b\neq \infty$,
\begin{equation}  \lb{6.11x}
x_j^{(n)}(x_0) = \ti x_j^{(n)}(b(x_0)) \qquad j=1, \dots, n
\end{equation}
and if $b(x_0)=\infty$,
\begin{equation}  \lb{6.12}
x_j^{(n)}(x_0) = \ti x_j^{(n-1)} (0) \qquad j=1, \dots, n-1
\end{equation}
and
\begin{equation}  \lb{6.13}
\lambda_j^{(n)}(x_0) = (K_{n-1} (x_j^{(n)}(x_0), x_j^{(n)}(x_0)))^{-1}
\end{equation}
Then Theorem~\ref{T6.1} becomes

\begin{theorem}[Gaussian Quadrature]\lb{T6.5} Fix $n,x_0$. Then
\begin{equation}  \lb{6.14}
\int Q(x)\, d\mu = \sum_{j=1}^n \lambda_j^{(n)}(x_0) Q(x_j^{(n)} (x_0))
\end{equation}
for all polynomials $Q$ of degree up to:
\begin{SL}
\item[{\rm{(1)}}] $2n-1$ if $P_n(x_0) =0$
\item[{\rm{(2)}}] $2n-2$ if $P_n(x_0)\neq 0 \neq P_{n-1}(x_0)$
\item[{\rm{(3)}}] $2n-3$ if $P_{n-1}(x_0)=0$.
\end{SL}
\end{theorem}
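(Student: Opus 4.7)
The strategy is to apply Theorem~\ref{T6.1} with the specific choice $b = b(x_0)$ from (6.10x), which is rigged precisely so that $x_0$ itself appears among the quadrature nodes. Indeed, by (6.5x),
\[
\det(z\bdone - J_{n;F}(b(x_0))) = P_n(z) - b(x_0)\, P_{n-1}(z),
\]
which vanishes at $z = x_0$ whenever $b(x_0)$ is finite. So for finite $b(x_0)$, the relabeling (6.11x) is consistent, and Proposition~\ref{P6.4} combined with (6.13) identifies $\lambda_j^{(n)}(x_0)$ with $\ti\lambda_j^{(n)}(b(x_0))$. Then for Case~(2), where $b(x_0)$ is finite and nonzero, Theorem~\ref{T6.1} immediately yields (6.14) for $Q(x) = x^\ell$ with $\ell = 0, \dots, 2n-2$, hence for all polynomials of degree at most $2n-2$ by linearity. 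Case~(1), where $P_n(x_0) = 0$ forces $b(x_0) = 0$, gains the extra power $\ell = 2n-1$ from the last sentence of Theorem~\ref{T6.1}, so the quadrature is exact up to degree $2n-1$.

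Case~(3), where $P_{n-1}(x_0) = 0$, requires a small extra argument since $b(x_0) = \infty$. By (6.12) one uses instead the $(n-1)$-point rule with nodes $\ti x_j^{(n-1)}(0)$; since $P_{n-1}(x_0) = 0$, Theorem~\ref{T6.2} applied at level $n-1$ places $x_0$ among these nodes. Applying Case~(1) with $n$ replaced by $n-1$ then yields exactness up to degree $2(n-1) - 1 = 2n-3$. One minor consistency point: the weights (6.13) are written in terms of $K_{n-1}$ rather than $K_{n-2}$, but because each $\ti x_j^{(n-1)}(0)$ is a root of $p_{n-1}$ (again by Theorem~\ref{T6.2} at level $n-1$), the extra summand $\abs{p_{n-1}(x)}^2$ vanishes there, so the two kernels agree at these nodes and (6.13) is the correct $(n-1)$-point weight.

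The main obstacle is not analytic but organizational: one must keep track of the three regimes of $b(x_0)$ (zero, finite nonzero, infinite), verify that each produces exactly the advertised accuracy $2n-1$, $2n-2$, or $2n-3$, and check that the single weight formula (6.13) is consistent with Proposition~\ref{P6.4} in every case. That the three cases are exhaustive and disjoint is guaranteed by the strict interlacing of the zeros of $P_n$ and $P_{n-1}$ (Theorem~\ref{T4.3}), which rules out the simultaneous vanishing of $P_n(x_0)$ and $P_{n-1}(x_0)$. Once this bookkeeping is in place, the theorem is essentially a direct translation of Theorem~\ref{T6.1} through the dictionary supplied by Corollary~\ref{C6.3} and Proposition~\ref{P6.4}.
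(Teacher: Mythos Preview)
Your proposal is correct and follows the same route as the paper: the theorem is presented there as the direct translation of Theorem~\ref{T6.1} through the definitions \eqref{6.10x}--\eqref{6.13}, and you have simply spelled out the three cases more carefully than the paper does (in particular the $b(x_0)=\infty$ case and the consistency of \eqref{6.13} with Proposition~\ref{P6.4} via $p_{n-1}(\ti x_j^{(n-1)}(0))=0$). There is nothing to correct.
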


\begin{remarks} 1. The sum goes to $n-1$ if $P_{n-1}(x_0)=0$.

\smallskip
2. We can define $x_j^{(n)}$ to be the solutions of
\begin{equation}  \lb{6.15}
p_{n-1}(x_0) p_n (x) - p_n(x_0) p_{n-1}(x)=0
\end{equation}
which has degree $n$ if $p_{n-1}(x_0)\neq 0$ and $n-1$ if $p_{n-1}(x_0)=0$.

\smallskip
3. \eqref{6.13} makes sense even if $\mu(\bbR)\neq 1$ and dividing by $\mu(\bbR)$ changes
$\int Q(x)\, d\mu$ and $\lambda_j^{(n)}$ by the same amount, so \eqref{6.14} holds for all
positive $\mu$ (with finite moments), not just the normalized ones.

\smallskip
4. The weights, $\lambda_j^{(n)}(x_0)$, in Gaussian quadrature are called Cotes numbers.
\end{remarks}

%%%%%%%%%%%%%%%%%%%%%%%%%%%%%%%%%%%%%%%%%%%%%%%%%%%%
\section{Markov--Stieltjes Inequalities} \lb{s7}
%%%%%%%%%%%%%%%%%%%%%%%%%%%%%%%%%%%%%%%%%%%%%%%%%%%%

The ideas of this section go back to Markov \cite{Markov} and Stieltjes \cite{Stie}
based on conjectures of Chebyshev \cite{Cheb} (see Freud \cite{FrB}).

\begin{lemma}\lb{L7.1} Fix $x_1 < \cdots < x_n$ in $\bbR$ distinct and $1\leq\ell < n$. Then there is a
polynomial, $Q$, of degree $2n-2$ so that
\begin{SL}
\item[{\rm{(i)}}]
\begin{equation}  \lb{7.1}
Q(x_j) = \begin{cases} 1 & j=1, \dots, \ell \\
0 & 1=\ell+1, \dots, n
\end{cases}
\end{equation}
\item[{\rm{(ii)}}] For all $x\in\bbR$,
\begin{equation}  \lb{7.2}
Q(x)\geq \chi_{(-\infty, x_\ell]} (x)
\end{equation}
\end{SL}
\end{lemma}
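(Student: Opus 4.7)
My plan is to define $Q$ via a Hermite-type interpolation problem at $x_1,\dots,x_n$ and then verify the inequality (ii) by a zero-counting argument applied to $Q'$ together with Rolle's theorem.

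\textbf{Construction.} I would take $Q$ to be the unique polynomial of degree $\le 2n-2$ satisfying the $2n-1$ linear conditions
\[
Q(x_j)=1\ (j\le\ell),\quad Q(x_j)=0\ (j>\ell),\quad Q'(x_j)=0\ \text{for every }j\ne\ell.
\]
Uniqueness (and hence existence, by dimension count) is immediate: a polynomial of degree $\le 2n-2$ in the kernel of this linear system has double zeros at the $n-1$ points $\{x_j:j\ne\ell\}$ and a simple zero at $x_\ell$, accounting for $2n-1$ zeros with multiplicity, forcing it to vanish. Property (i) is built in.

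\textbf{Counting zeros of $Q'$.} The heart of (ii) is the following observation. The derivative $Q'$ has degree $\le 2n-3$ and has $n-1$ forced zeros at $\{x_j:j\ne\ell\}$. Moreover, $Q$ takes the same value at the endpoints of each interval $(x_j,x_{j+1})$ with $j\ne\ell$ (both $1$ if $j<\ell$, both $0$ if $j>\ell$), so Rolle's theorem furnishes at least one additional zero of $Q'$ in each of these $n-2$ intervals. This already gives $2n-3$ zeros, saturating the degree bound. Consequently every zero of $Q'$ is simple, and $Q'$ has \emph{no} zeros in the ``exceptional'' intervals $(-\infty,x_1)$, $(x_\ell,x_{\ell+1})$, and $(x_n,\infty)$.

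\textbf{Propagation of signs.} On $(x_\ell,x_{\ell+1})$, $Q$ drops from $1$ to $0$ with no critical point, so $Q'<0$ there and $Q\in[0,1]$. Since each zero of $Q'$ at an $x_j$ is simple, $Q'$ alternates sign across every $x_j$; combined with the single Rolle critical point between consecutive $x_j,x_{j+1}$ (for $j\ne\ell$), this pins down the sign of $Q'$ on every sub-interval by induction outward from $(x_\ell,x_{\ell+1})$. The resulting pattern is that each $x_j$ with $j\ne\ell$ is a local minimum of $Q$ with value $1$ (when $j<\ell$) or $0$ (when $j>\ell$), that $Q\ge 1$ on each $(x_{j-1},x_j)$ for $j\le\ell$, and that $Q\ge 0$ on each $(x_j,x_{j+1})$ for $j\ge\ell$. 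Finally, on the tails $(-\infty,x_1)$ and $(x_n,\infty)$ the derivative $Q'$ has constant (nonzero) sign and matches the signs at $x_1$ and $x_n$ inferred above, yielding $Q\ge 1$ on $(-\infty,x_1)$ and $Q\ge 0$ on $(x_n,\infty)$.

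\textbf{Main obstacle.} The delicate step is the bookkeeping of signs in the propagation. One must make sure the Rolle count is exactly $2n-3$ (so no extra oscillation is possible), and then correctly track the $\pm$ alternation of $Q'$ through the simple-zero structure at each $x_j$ while handling the ``break'' at $x_\ell$, where $Q'\neq 0$ and no Rolle zero is present to its right. A minor technical point is the possibility that $\deg Q<2n-2$; one checks that this only lowers the degree of $Q'$ and strengthens the count, so the argument survives without modification.
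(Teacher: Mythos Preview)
Your proposal is correct and follows essentially the same approach as the paper: the same Hermite interpolation construction (values at all $x_j$, vanishing derivative at every $x_j$ with $j\ne\ell$), then the same zero-count for $Q'$ ($n-1$ imposed zeros plus $n-2$ Rolle zeros, saturating degree $2n-3$), and the same sign-propagation starting from $Q'<0$ on $(x_\ell,x_{\ell+1})$. Your write-up is in fact more detailed than the paper's, which dispatches the sign-tracking in a single sentence; one small remark is that your final ``minor technical point'' is superfluous, since the $2n-3$ distinct zeros of $Q'$ already force $\deg Q=2n-2$ exactly.
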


\begin{remark} Figure~1  has a graph of $Q$ and $\chi_{(-\infty, x_\ell]}$ for $n=5$, $\ell=3$, $x_j=j-1$.
\end{remark}

\begin{center}
\begin{figure}[h]
\includegraphics[scale=0.6]{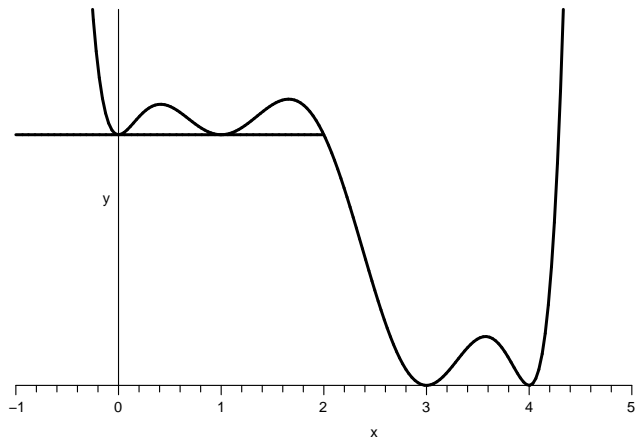}
\caption{An interpolation polynomial}
\end{figure}
\end{center}

\begin{proof} By standard interpolation theory, there exists a unique polynomial of degree $k$ with $k+1$
conditions of the form
\[
Q(y_j) = Q'(y_j) = \cdots = Q^{(n_j)}(y_j) = 0
\]
$\sum_j n_j = k+1$. Let $Q$ be the polynomial of degree $2n-2$ with the $n$ conditions in \eqref{7.1} and
the $n-1$ conditions
\begin{equation}  \lb{7.3}
Q'(x_j)=0\qquad j=1, \dots, \ell-1, \ell+1, \dots, n
\end{equation}

Clearly, $Q'$ has at most $2n-3$ zeros. $n-1$ are given by \eqref{7.3} and, by Snell's theorem, each of
the $n-2$ intervals $(x_1, x_2), \dots, (x_{\ell-1}, x_\ell), (x_{\ell+1}, x_{\ell+2}), \dots,
(x_{n-1}, x_n)$ must have a zero. Since $Q'$ is nonvanishing on $(x_\ell,x_{\ell+1})$ and $Q(x_\ell)=1
> Q(x_{\ell+1})=0$, $Q'(y)<0$ on $(x_\ell,x_{\ell+1})$. Tracking where $Q'$ changes sign, one sees that
\eqref{7.2} holds.
\end{proof}

\begin{theorem} \lb{T7.2} Suppose $d\mu$ is a measure on $\bbR$ with finite moments. Then
\begin{equation}  \lb{7.4}
\begin{split}
&\sum_{\{j\mid x_j^{(n)}(x_0) \leq x_0\}} \f{1}{K_{n-1} (x_j^{(n)} (x_0), x_j^{(n)}(x_0))}
\geq \mu((-\infty, x_0]) \\
&\qquad \geq \mu((-\infty, x_0)) \geq \sum_{\{j\mid x_j^{(n)}(x_0) < x_0\}}
\f{1}{K_{n-1} (x_j^{(n)}(x_0), x_j^{(n)}(x_0))}
\end{split}
\end{equation}
\end{theorem}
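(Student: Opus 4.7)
The plan is to combine the interpolation polynomial from Lemma~\ref{L7.1} with the Gaussian quadrature formula of Theorem~\ref{T6.5}. The crucial observation is that, by Remark~2 following Theorem~\ref{T6.5}, $x_0$ is itself always one of the quadrature nodes, since substituting $x=x_0$ into $p_{n-1}(x_0)p_n(x) - p_n(x_0)p_{n-1}(x)$ yields zero. Let $\ell$ denote the index with $x_\ell^{(n)}(x_0) = x_0$; then the sets $\{j : x_j^{(n)}(x_0) \le x_0\}$ and $\{j : x_j^{(n)}(x_0) < x_0\}$ are $\{1,\dots,\ell\}$ and $\{1,\dots,\ell-1\}$, respectively.

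For the upper bound, I would apply Lemma~\ref{L7.1} with the ordered nodes $x_1^{(n)}(x_0) < \dots < x_n^{(n)}(x_0)$ and this index $\ell$ (assuming $1 \le \ell < n$; the case $\ell = n$ is trivial since then the full Gaussian sum $\sum_j \lambda_j^{(n)}(x_0) = \mu(\bbR) \ge \mu((-\infty,x_0])$ by taking $Q \equiv 1$ in Theorem~\ref{T6.5}). This yields a polynomial $Q$ of degree at most $2n-2$ with $Q(x_j^{(n)}) = 1$ for $j \le \ell$, $Q(x_j^{(n)}) = 0$ for $j > \ell$, and $Q(x) \ge \chi_{(-\infty, x_\ell]}(x) = \chi_{(-\infty, x_0]}(x)$ pointwise. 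Integrating against $d\mu$ gives $\int Q\, d\mu \ge \mu((-\infty, x_0])$, while applying Theorem~\ref{T6.5} (valid for $\deg \le 2n-2$) together with \eqref{6.13} gives
\[
\int Q\, d\mu = \sum_{j=1}^{n} \lambda_j^{(n)}(x_0) Q(x_j^{(n)}(x_0)) = \sum_{j=1}^{\ell} \frac{1}{K_{n-1}(x_j^{(n)}(x_0), x_j^{(n)}(x_0))},
\]
which is the upper bound.

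For the lower bound, I would use the mirror image of Lemma~\ref{L7.1} (obtained either by applying $x \mapsto -x$ to the same argument, or directly by swapping the roles of $\{x_1,\dots,x_{\ell-1}\}$ and $\{x_\ell,\dots,x_n\}$ in its proof) to produce a polynomial $\widetilde Q$ of degree at most $2n-2$ with $\widetilde Q(x_j^{(n)}) = 0$ for $j < \ell$, $\widetilde Q(x_j^{(n)}) = 1$ for $j \ge \ell$, and $\widetilde Q \ge \chi_{[x_0,\infty)}$. Then $R := 1 - \widetilde Q$ satisfies $R \le \chi_{(-\infty,x_0)}$, so $\int R\, d\mu \le \mu((-\infty, x_0))$, while quadrature gives $\int R\, d\mu = \sum_{j < \ell} K_{n-1}(x_j^{(n)}, x_j^{(n)})^{-1}$. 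The middle inequality $\mu((-\infty,x_0]) \ge \mu((-\infty,x_0))$ is immediate.

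There is no deep obstacle in this scheme; the care required is in the bookkeeping of edge cases. In particular, one must note: (i) $\ell = n$ or $\ell = 1$ make Lemma~\ref{L7.1} inapplicable but render the corresponding inequality trivial; (ii) when $P_{n-1}(x_0) = 0$ there are only $n-1$ quadrature nodes and exactness only up through degree $2n-3$, but then Lemma~\ref{L7.1} produces a polynomial of degree at most $2(n-1) - 2 = 2n-4$, so quadrature still applies exactly; (iii) the normalization issue flagged in Remark~3 after Theorem~\ref{T6.5} means the argument goes through without the $\mu(\bbR) = 1$ assumption.
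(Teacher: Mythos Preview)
Your proof is correct and follows essentially the same route as the paper: the upper bound via Lemma~\ref{L7.1} and quadrature is identical, and your mirror-image construction for the lower bound is exactly what the paper abbreviates as ``by $x\to -x$ symmetry.'' You are in fact more thorough than the paper, which treats only the case $P_{n-1}(x_0)\neq 0$ and leaves the $\ell=1$, $\ell=n$, and $P_{n-1}(x_0)=0$ edge cases implicit.
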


\begin{remarks} 1. The two bounds differ by $K_{n-1}(x_0,x_0)^{-1}$.

\smallskip
2. These imply
\begin{equation}  \lb{7.5}
\mu(\{x_0\}) \leq K_{n-1}(x_0,x_0)^{-1}
\end{equation}
In fact, one knows (see \eqref{9.18} below)
\begin{equation}  \lb{7.6}
\mu(\{x_0\}) =\lim_{n\to\infty}\, K_{n-1} (x_0,x_0)^{-1}
\end{equation}
If $\mu(\{x_0\}) =0$, then the bounds are exact as $n\to\infty$.
\end{remarks}

\begin{proof} Suppose $P_{n-1}(x_0)\neq 0$. Let $\ell$ be such that $x_\ell^{(n)}(x_0)=x_0$.
Let $Q$ be the polynomial of Lemma~\ref{L7.1}. By \eqref{7.2},
\[
\mu((-\infty, x_0]) \leq \int Q(x)\, d\mu
\]
and, by \eqref{7.1} and Theorem~\ref{T6.5}, the integral is the sum on the left of \eqref{7.4}.

Clearly, this implies
\[
\mu((x_0,\infty))\geq \sum_{\{j\mid x_j^{(n)}(x_0) > x_0\}} \f{1}{K_{n-1}(x_j^{(n)}(x_0),
x_j^{(n)}(x_0))}
\]
which, by $x\to -x$ symmetry, implies the last inequality in \eqref{7.4}.
\end{proof}

\begin{corollary} If $\ell \leq k-1$, then
\begin{equation}  \lb{7.7}
\begin{aligned}
\sum_{j=\ell+1}^{k-1} \f{1}{K(x_j^{(n)}(x_0), x_j^{(n)}(x_0))} & \leq \mu ([x_\ell^{(n)}(x_0),
x_k^{(n)}(x_0)]) \\
& \leq \sum_{j=\ell}^k \f{1}{K(x_j^{(n)}(x_0), x_j^{(n)}(x_0))}
\end{aligned}
\end{equation}
\end{corollary}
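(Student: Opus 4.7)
The plan is to derive the corollary from Theorem~\ref{T7.2} by applying it at two different reference points (namely the nodes $x_\ell^{(n)}(x_0)$ and $x_k^{(n)}(x_0)$) and subtracting. The key observation I need to establish is that the set of Gaussian quadrature nodes is invariant under replacing $x_0$ by any of its own nodes: if $y \in \{x_j^{(n)}(x_0)\}_{j=1}^n$, then $b(y) = b(x_0)$ and hence $\{x_j^{(n)}(y)\} = \{x_j^{(n)}(x_0)\}$ together with the corresponding Cotes numbers. This follows from the characterization in Remark~2 after Theorem~\ref{T6.5}: every $y$ in the node set satisfies $p_{n-1}(x_0) p_n(y) - p_n(x_0) p_{n-1}(y) = 0$, which rearranges to $P_n(y)/P_{n-1}(y) = P_n(x_0)/P_{n-1}(x_0)$, i.e. $b(y) = b(x_0)$.

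Granted this, applying Theorem~\ref{T7.2} at $x_0' := x_k^{(n)}(x_0)$ yields
\[
\sum_{j=1}^{k-1} \frac{1}{K_{n-1}(x_j^{(n)}(x_0), x_j^{(n)}(x_0))} \leq \mu\bigl((-\infty, x_k^{(n)}(x_0))\bigr) \leq \mu\bigl((-\infty, x_k^{(n)}(x_0)]\bigr) \leq \sum_{j=1}^{k} \frac{1}{K_{n-1}(x_j^{(n)}(x_0), x_j^{(n)}(x_0))},
\]
and analogously at $x_0'' := x_\ell^{(n)}(x_0)$ we get the same string with $k$ replaced by $\ell$.

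Subtracting gives the result: for the upper bound use
\[
\mu([x_\ell^{(n)}(x_0), x_k^{(n)}(x_0)]) = \mu((-\infty, x_k^{(n)}(x_0)]) - \mu((-\infty, x_\ell^{(n)}(x_0))) \leq \sum_{j=1}^{k} \tfrac{1}{K_{n-1}} - \sum_{j=1}^{\ell-1} \tfrac{1}{K_{n-1}} = \sum_{j=\ell}^{k} \tfrac{1}{K_{n-1}},
\]
while for the lower bound use the chain
\[
\mu([x_\ell^{(n)}(x_0), x_k^{(n)}(x_0)]) \geq \mu((x_\ell^{(n)}(x_0), x_k^{(n)}(x_0))) = \mu((-\infty, x_k^{(n)}(x_0))) - \mu((-\infty, x_\ell^{(n)}(x_0)]) \geq \sum_{j=1}^{k-1} \tfrac{1}{K_{n-1}} - \sum_{j=1}^{\ell} \tfrac{1}{K_{n-1}},
\]
which collapses to $\sum_{j=\ell+1}^{k-1} \tfrac{1}{K_{n-1}}$, as required.

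The only real obstacle is the invariance observation in the first paragraph; once that is in hand the proof is purely combinatorial cancellation of telescoping partial sums. A minor edge case occurs if $b(x_0) = \infty$ (i.e., $P_{n-1}(x_0)=0$), in which case one uses the definition \eqref{6.12}; the argument goes through identically using the alternate characterization from Remark~2 after Theorem~\ref{T6.5}.
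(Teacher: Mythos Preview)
Your proof is correct and follows exactly the paper's approach: the paper's one-line proof says ``Note if $x_1=x_\ell^{(n)}(x_0)$ for some $\ell$, then $x_j^{(n)}(x_0)=x_j^{(n)}(x_1)$, so we get \eqref{7.7} by subtracting values of \eqref{7.4},'' and you have simply spelled out both the invariance observation (via $b(y)=b(x_0)$) and the subtraction in full detail.
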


\begin{proof} Note if $x_1=x_\ell^{(n)}(x_0)$ for some $\ell$, then $x_j^{(n)}(x_0)=x_j^{(n)}
(x_1)$, so we get \eqref{7.7} by subtracting values of \eqref{7.4}.
\end{proof}

Notice that this corollary gives effective lower bounds only if $k-1\geq\ell+1$, that is, only on
at least three consecutive zeros. The following theorem of Last--Simon \cite{S309}, based on ideas
of Golinskii \cite{Gol02}, can be used on successive zeros (see \cite{S309} for the proof).

\begin{theorem}\lb{T7.4} If $E,E'$ are distinct zeros of $P_n(x)$, $\wti E=\f12(E+E')$ and
$\delta >\f12\abs{E-E'}$, then
\begin{equation}  \lb{7.8}
\abs{E-E'} \geq \f{\delta^2 - (\f12\abs{E-E'}^2)^2}{3n}
\biggl[ \f{K_n(E,E)}{\sup_{\abs{y-\wti E} \leq \delta} K_n(y,y)}\biggr]^{1/2}
\end{equation}
\end{theorem}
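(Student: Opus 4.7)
The plan is to use the Christoffel--Darboux kernel itself as a test polynomial, exploit the reproducing property through the Schwarz inequality \eqref{1.13a}, and then invoke a Bernstein--Markov-type polynomial inequality to convert a uniform $L^\infty$ bound into a lower bound on the separation of two specified values of the polynomial.

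First I would set
\[
f(y) = K_n(y, E)/\sqrt{K_n(E, E)},
\]
a polynomial of degree $n$ in $y$ with $\int |f|^2\, d\mu = 1$ by the reproducing property \eqref{1.12}. The crucial arithmetic input is that $f(E') = 0$: indeed, applying \eqref{3.1},
\[
K_n(E, E') = a_{n+1}\,\frac{p_{n+1}(E)\,p_n(E') - p_n(E)\,p_{n+1}(E')}{E - E'} = 0,
\]
since $p_n(E) = p_n(E') = 0$. At the same time $f(E) = \sqrt{K_n(E, E)}$, so $f$ must drop by exactly $\sqrt{K_n(E, E)}$ across the interval $[E, E']$.

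Next, the reproducing representation $f(y) = \int K_n(x, y)\,f(x)\,d\mu(x)$ combined with the Schwarz inequality gives the pointwise bound
\[
|f(y)|^2 \leq K_n(y, y)\int |f|^2\, d\mu = K_n(y, y),
\]
so $\|f\|_{L^\infty[\wti E - \delta, \wti E + \delta]} \leq \sqrt{M}$, where $M := \sup_{|y - \wti E| \leq \delta} K_n(y, y)$.

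Finally, I would apply a Bernstein--Markov-type inequality on $[\wti E - \delta, \wti E + \delta]$. The direct Bernstein estimate $|f'(y)| \leq n\sqrt{M}/\sqrt{\delta^2 - (y - \wti E)^2}$, integrated across $[E, E']$ (where $|y - \wti E| \leq |E - E'|/2 < \delta$), already yields a lower bound of the shape $|E - E'| \gtrsim n^{-1}\sqrt{\delta^2 - (|E-E'|/2)^2}\,\sqrt{K_n(E,E)/M}$. To obtain the sharper form $(\delta^2 - (|E-E'|/2)^2)/(3n)$ claimed in the statement, I expect one must apply a Bernstein-type estimate instead to the nonnegative polynomial $f^2$ of degree $2n$, which vanishes to second order at $E'$ (since $K_n(\cdot, E)$ has a simple zero there), and exploit this double zero via the Golinskii-type refinement of \cite{Gol02}. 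The main obstacle is precisely this last step: producing the correct power of $\delta^2 - (|E-E'|/2)^2$ and the constant $1/(3n)$ requires a careful polynomial inequality rather than the off-the-shelf Bernstein bound, and this is where the argument becomes technical and is deferred to \cite{S309}.
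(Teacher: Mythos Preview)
The paper does not actually prove Theorem~\ref{T7.4}: immediately after stating it, the text says ``see \cite{S309} for the proof'' and records only that the argument is ``based on ideas of Golinskii \cite{Gol02}.'' So there is no in-paper proof to compare against; both the paper and your proposal ultimately defer to \cite{S309} for the final technical step.

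That said, your sketch is a faithful outline of the Golinskii/Last--Simon strategy. The three ingredients you isolate---(i) $K_n(E,E')=0$ from the CD formula \eqref{3.1} because $p_n(E)=p_n(E')=0$, (ii) the pointwise bound $|K_n(y,E)|\le\sqrt{K_n(y,y)K_n(E,E)}$ from \eqref{1.13a}, and (iii) a Bernstein--Markov inequality on $[\wti E-\delta,\wti E+\delta]$ applied to the degree-$n$ polynomial $K_n(\,\cdot\,,E)$---are precisely the ones used in \cite{S309}. You also correctly flag that the naive Bernstein bound yields the exponent $\sqrt{\delta^2-(\tfrac12|E-E'|)^2}$ rather than the first power, and that squeezing out the stated form requires a refinement (in \cite{S309} this is done via a careful Chebyshev/Bernstein argument rather than the raw inequality). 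In short, your proposal goes further than the paper itself in explaining the mechanism, and what you identify as the ``main obstacle'' is exactly the place where both you and the paper hand off to \cite{S309}.
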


%%%%%%%%%%%%%%%%%%%%%%%%%%%%%%%%%%%%%%%%%%%%
\section{Mixed CD Kernels} \lb{s8}
%%%%%%%%%%%%%%%%%%%%%%%%%%%%%%%%%%%%%%%%%%%%

Recall that given a measure $\mu$ on $\bbR$ with finite moments and Jacobi parameters
$\{a_n,b_n\}_{n=1}^\infty$, the {\it second kind polynomials} are defined by the recursion
relations \eqref{1.5} but with initial conditions
\begin{equation}  \lb{2.1.1}
q_0(x) =0 \qquad q_1(x)=a_1^{-1}
\end{equation}
so $q_n(x)$ is a polynomial of degree $n-1$. In fact, if $\ti\mu$ is the measure with Jacobi
parameters given by
\[
\ti a_n =a_{n+1} \qquad \ti b_n=b_{n+1}
\]
then
\begin{equation}  \lb{8.1}
q_n(x;d\mu) = a_1^{-1} p_{n-1} (x;d\ti\mu)
\end{equation}
It is sometimes useful to consider
\begin{equation}  \lb{8.2}
K_n^{(q)} (x,y) = \sum_{j=0}^n\, \ol{q_j(x)}\, q_j(y)
\end{equation}
and the mixed CD kernel
\begin{equation}  \lb{8.3}
K_n^{(pq)} (x,y) = \sum_{j=0}^n\, \ol{q_j(x)}\, p_j(y)
\end{equation}

Since \eqref{8.1} implies
\begin{equation}  \lb{8.4}
K_n^{(q)} (x,y;d\mu) = a_1^{-2} K_{n-1} (x,y;d\ti\mu)
\end{equation}
there is a CD formula for $K^{(q)}$ which follows immediately from the one for $K$. There is also a
mixed CD formula for $K_n^{(pq)}$.

OPUC also have second kind polynomials, mixed CD kernels, and mixed CD formulae. These are discussed in
Section~3.2 of \cite{OPUC1}.

Mixed CD kernels will enter in Section~\ref{s21}.

%%%%%%%%%%%%%%%%%%%%%%%%%%%%%%%%%%%%%%%%%%%%%%%%%%%%%%%%%%%%%%
\section{Variational Principle: Basics} \lb{s9}
%%%%%%%%%%%%%%%%%%%%%%%%%%%%%%%%%%%%%%%%%%%%%%%%%%%%%%%%%%%%%%

If one thing marks the OP approach to the CD kernel that has been missing from the spectral
theorists' approach, it is a remarkable variational principle for the diagonal kernel. We
begin with:

\begin{lemma}\lb{L9.1} Fix $(\alpha_1, \dots, \alpha_m)\in\bbC^m$. Then
\begin{equation}  \lb{9.1}
\min\biggl(\, \sum_{j=1}^m \abs{z_j}^2 \biggm| \sum_{j=1}^m \alpha_j z_j =1\biggr) =
\biggl( \, \sum_{j=1}^m \, \abs{\alpha_j}^2\biggr)^{-1}
\end{equation}
with the minimizer given uniquely by
\begin{equation}  \lb{9.2}
z_j^{(0)} = \f{\bar\alpha_j}{\sum_{j=1}^m\, \abs{\alpha_j}^2}
\end{equation}
\end{lemma}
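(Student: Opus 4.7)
The plan is to recognize this as a minimum-norm solution problem for a single linear constraint and dispatch it with the Cauchy--Schwarz inequality in $\bbC^m$. Specifically, I would view the constraint $\sum_j \alpha_j z_j = 1$ as an inner product identity $\jap{\bar\alpha, z} = 1$, where $\bar\alpha = (\bar\alpha_1, \dots, \bar\alpha_m)$ and $\jap{u,v} = \sum_j \bar u_j v_j$ is the standard Hermitian inner product.

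First I would apply Cauchy--Schwarz to get
\[
1 = \abs{\jap{\bar\alpha, z}}^2 \leq \Norm{\bar\alpha}^2 \Norm{z}^2 = \Bigl(\, \sum_j \abs{\alpha_j}^2\Bigr) \Bigl(\, \sum_j \abs{z_j}^2\Bigr),
\]
which immediately yields the lower bound $\sum_j \abs{z_j}^2 \geq (\sum_j \abs{\alpha_j}^2)^{-1}$. Next, I would invoke the equality case of Cauchy--Schwarz: equality holds iff $z$ is a scalar multiple of $\bar\alpha$, say $z_j = c\bar\alpha_j$. Feeding this back into the constraint $\sum_j \alpha_j (c\bar\alpha_j) = 1$ forces $c = (\sum_j \abs{\alpha_j}^2)^{-1}$, which gives exactly \eqref{9.2}. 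A direct computation then confirms that this $z^{(0)}$ achieves $\sum_j \abs{z_j^{(0)}}^2 = (\sum_j \abs{\alpha_j}^2)^{-1}$, so the bound is attained. Uniqueness of the minimizer is also built in: the Cauchy--Schwarz equality case fixes $z$ up to a scalar, and the constraint fixes that scalar.

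Implicit in the formula is that $\alpha \neq 0$ (otherwise the constraint set is empty), which is automatic wherever the right-hand side of \eqref{9.1} is finite. There is no real obstacle here; the only mild subtlety is keeping conjugates straight (so that $\jap{\bar\alpha,z}$ really does read off the constraint). An alternative derivation via Lagrange multipliers, or by writing the affine constraint set as $z^{(0)} + \{\bar\alpha\}^\perp$ and using the Pythagorean theorem, gives the same result with comparable effort; I prefer Cauchy--Schwarz because it packages the bound and the equality condition in one line.
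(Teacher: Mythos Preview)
Your proof is correct. The paper takes a slightly different (though closely related) route: rather than invoking Cauchy--Schwarz, it simply verifies the identity
\[
\sum_{j=1}^m \abs{z_j - z_j^{(0)}}^2 = \sum_{j=1}^m \abs{z_j}^2 - \biggl(\,\sum_{j=1}^m \abs{\alpha_j}^2\biggr)^{-1}
\]
for any $z$ satisfying the constraint, by expanding the left side and using $\sum_j \alpha_j z_j = 1$ to collapse the cross terms. This is exactly the Pythagorean/affine-decomposition alternative you mention at the end. The paper's version has the mild advantage that the minimum value and the uniqueness of the minimizer drop out of a single identity without separately analyzing the equality case, whereas your Cauchy--Schwarz argument packages the lower bound neatly and then recovers the minimizer from the equality condition. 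Both are one-line arguments of the same strength.
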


\begin{remark} One can use Lagrange multipliers to a priori compute $z_j^{(0)}$ and prove this result.
\end{remark}

\begin{proof} If
\begin{equation}  \lb{9.3}
\sum_{j=1}^m \alpha_j z_j =1
\end{equation}
then
\begin{equation}  \lb{9.4}
\sum_{j=1}^m\, \abs{z_j-z_j^{(0)}}^2 = \sum_{j=1}^m \, \abs{z_j}^2 - \biggl( \, \sum_{j=1}^m \,
\abs{\alpha_j}^2\biggr)^{-1}
\end{equation}
from which the result is obvious.
\end{proof}

If $Q$ has $\deg(Q)\leq n$ and $Q_n (z_0)=1$, then
\begin{equation}  \lb{9.5}
Q_n(z) =\sum_{j=0}^n \alpha_j x_j (z)
\end{equation}
with $x_j$ the orthonormal polynomials for a measure $d\mu$, then $\sum\alpha_j x_j (z_0)=1$ and
$\Norm{Q_n}_{L^2(\bbC,d\mu)}^2 = \sum_{j=0}^n \abs{\alpha_j}^2$. Thus the lemma implies:

\begin{theorem}[Christoffel Variational Principle]\lb{T9.2} Let $\mu$ be a measure on $\bbC$ with finite
moments. Then for $\bar z_0\in\bbC$,
\begin{equation}  \lb{9.6}
\min\biggl(\int \abs{Q_n(z)}^2\, d\mu \biggm| Q_n(z_0) =1, \, \deg(Q_n)\leq n\biggr)
= \f{1}{K_n(z_0,z_0)}
\end{equation}
and the minimizer is given by
\begin{equation}  \lb{9.7}
Q_n(z,z_0) = \f{K_n(z_0,z)}{K_n(z_0,z_0)}
\end{equation}
\end{theorem}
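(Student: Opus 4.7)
The plan is to reduce Theorem~\ref{T9.2} directly to Lemma~\ref{L9.1} by expanding the competitor $Q_n$ in the orthonormal basis of polynomials of degree at most $n$. First, I would observe that $\{x_0, x_1, \dots, x_n\}$ spans this $(n+1)$-dimensional subspace and is orthonormal in $L^2(\bbC, d\mu)$, so every admissible $Q_n$ has a unique expansion $Q_n(z) = \sum_{j=0}^n \alpha_j x_j(z)$, and by \eqref{1.4},
\[
\int \abs{Q_n(z)}^2\, d\mu = \sum_{j=0}^n \abs{\alpha_j}^2.
\]

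Next, the interpolation constraint $Q_n(z_0) = 1$ translates to the single linear condition $\sum_{j=0}^n x_j(z_0)\, \alpha_j = 1$, which is precisely the hypothesis of Lemma~\ref{L9.1} with $m = n+1$, the role of $\alpha_j$ in the lemma played by $x_j(z_0)$, and the role of $z_j$ played by our expansion coefficients $\alpha_j$. The lemma then yields the minimum value
\[
\left(\sum_{j=0}^n \abs{x_j(z_0)}^2\right)^{-1} = \frac{1}{K_n(z_0, z_0)},
\]
where the equality uses \eqref{1.8} on the diagonal, and identifies the unique minimizing coefficients as $\alpha_j^{(0)} = \overline{x_j(z_0)}/K_n(z_0, z_0)$.

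Finally, substituting these coefficients back,
\[
Q_n(z, z_0) = \sum_{j=0}^n \frac{\overline{x_j(z_0)}\, x_j(z)}{K_n(z_0, z_0)} = \frac{K_n(z_0, z)}{K_n(z_0, z_0)},
\]
again by \eqref{1.8}. There is no serious obstacle in this argument: the substantive content is packaged entirely inside Lemma~\ref{L9.1}. The only point that requires care is to match the conjugation convention in \eqref{1.8}---which places the bar on the first slot---so that the coefficients $\overline{x_j(z_0)}$ produced by the lemma recombine into $K_n(z_0, z)$ rather than $K_n(z, z_0)$. Evaluating the resulting formula at $z = z_0$ gives $K_n(z_0, z_0)/K_n(z_0, z_0) = 1$, so the normalization constraint is automatically satisfied and the minimizer is indeed admissible.
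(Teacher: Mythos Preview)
Your proposal is correct and follows essentially the same argument as the paper: expand $Q_n$ in the orthonormal basis $\{x_j\}_{j=0}^n$, translate the constraint $Q_n(z_0)=1$ into the linear condition of Lemma~\ref{L9.1}, and read off both the minimum value and the minimizer. Your added remark about matching the conjugation convention in \eqref{1.8} is a useful clarification that the paper leaves implicit.
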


One immediate useful consequence is:

\begin{theorem}\lb{T9.3} If $\mu\leq \nu$, then
\begin{equation}  \lb{9.8}
K_n(z,z;d\nu) \leq K_n(z,z;d\mu)
\end{equation}
\end{theorem}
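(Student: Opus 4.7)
The plan is to obtain the inequality as an immediate corollary of the Christoffel variational principle (Theorem~\ref{T9.2}), which characterizes $1/K_n(z_0,z_0;d\mu)$ as an infimum of $L^2$-norms of polynomials subject to a normalization at $z_0$. The key observation is that this infimum is monotone in the measure: a larger measure produces a larger $L^2$-norm for every admissible test polynomial, hence a larger infimum.

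Concretely, first I would fix $z_0 \in \bbC$ and note that the admissible class $\{Q_n : \deg(Q_n) \leq n,\ Q_n(z_0) = 1\}$ appearing in \eqref{9.6} does not depend on the measure. Next, from $\mu \leq \nu$ I would use the fact that for any nonnegative Borel function (in particular $|Q_n|^2$),
\begin{equation}
\int |Q_n(z)|^2\, d\mu(z) \leq \int |Q_n(z)|^2\, d\nu(z).
\end{equation}
Then I would let $Q_n^{(\nu)}$ denote the minimizer \eqref{9.7} for $d\nu$, so that
\begin{equation}
\frac{1}{K_n(z_0,z_0;d\mu)} \leq \int |Q_n^{(\nu)}(z)|^2\, d\mu(z) \leq \int |Q_n^{(\nu)}(z)|^2\, d\nu(z) = \frac{1}{K_n(z_0,z_0;d\nu)},
\end{equation}
where the first inequality is because $Q_n^{(\nu)}$ is admissible in the minimization problem for $d\mu$ (it satisfies $Q_n^{(\nu)}(z_0)=1$ and has degree $\leq n$), and the equality is the content of Theorem~\ref{T9.2} applied to $d\nu$. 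Taking reciprocals flips the inequality and yields \eqref{9.8}.

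There is essentially no obstacle: the only thing to be slightly careful about is that the admissible class is nonempty and that both $K_n(z_0,z_0;d\mu)$ and $K_n(z_0,z_0;d\nu)$ are strictly positive so that reciprocating is legitimate. Positivity follows from $p_0 \equiv \kappa_0 > 0$ for any nontrivial finite-moment measure, so $K_n(z_0,z_0) \geq |\kappa_0|^2 > 0$; nontriviality of $d\nu$ is inherited from $d\mu \leq d\nu$ if we assume $d\mu$ is nontrivial (and the inequality is vacuous otherwise, or one can treat $1/0$ as $+\infty$ in \eqref{9.6} consistently). With these remarks the proof is a three-line application of the variational principle.
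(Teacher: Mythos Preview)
Your proposal is correct and is precisely the argument the paper has in mind: the paper states Theorem~\ref{T9.3} as ``one immediate useful consequence'' of the Christoffel variational principle (Theorem~\ref{T9.2}) without writing out a proof, and your three-line derivation via monotonicity of the infimum in the measure is exactly that immediate consequence.
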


For this reason, it is useful to have comparison models:

\begin{example}\lb{E9.4} Let $d\mu = d\theta/2\pi$ for $z=re^{i\theta}$ and $\zeta =
e^{i\varphi}$. We have, since $\varphi_n(z)=z^n$,
\begin{equation}  \lb{9.9}
K_n(z,\zeta) = \f{1-r^{n+1} e^{i(n+1)(\varphi-\theta)}}{1-re^{i(\varphi-\theta)}}
\end{equation}
If $r<1$, $K_n(z,z_0)$ has a limit as $n\to\infty$, and for $z=e^{i\varphi}$, $z_0 =
re^{i\theta}$, $r<1$,
\begin{equation}  \lb{9.10}
\abs{Q_n(z,z_0)}^2 \, \f{d\varphi}{2\pi} \to P_r (\theta,\varphi)\, \f{d\varphi}{2\pi}
\end{equation}
the Poisson kernel,
\begin{equation}  \lb{9.11}
P_r (\theta,\varphi) = \f{1-r^2}{1+r^2 - 2r\cos(\theta-\varphi)}
\end{equation}
For $r=1$, we have
\begin{equation}  \lb{9.12}
\abs{K_n(e^{i\theta}, e^{i\varphi})}^2 = \f{\sin^2 (\f{n+1}{2}(\theta-\varphi))}{\sin^2 (\theta-\varphi)}
\end{equation}
the Fej\'er kernel.

For $r>1$, we use
\begin{equation}  \lb{9.12a}
K_n(z,\zeta) = \bar z^n \zeta^n K_n \biggl( \f{1}{z}\, , \f{1}{\zeta}\biggr)
\end{equation}
which implies, for $z=e^{i\varphi}$, $z_0=re^{i\theta}$, $r>1$,
\begin{equation}  \lb{9.12b}
\abs{Q_n(z,z_0)}^2 \, \f{d\varphi}{2\pi} \to P_{r^{-1}} (\theta,\varphi)\, \f{d\varphi}{2\pi}
\end{equation}
\qed
\end{example}

\begin{example}\lb{E9.5} Let $d\mu_0$ be the measure
\begin{equation}  \lb{9.13}
d\mu_0(x) = \f{1}{2\pi}\, \sqrt{4-x^2}\, \chi_{[-2,2]}(x)\, dx
\end{equation}
on $[-2,2]$. Then $p_n$ are the Chebyshev polynomials of the second kind,
\begin{equation}  \lb{9.14}
p_n(2\cos\theta) = \f{\sin(n+1)\theta}{\sin\theta}
\end{equation}
In particular, if $\abs{x}\leq 2-\delta$,
\begin{equation}  \lb{9.15}
\abs{p_n(x+iy)} \leq C_{1,\delta} e^{nC_{2,\delta}\abs{y}}
\end{equation}
and so
\begin{equation}  \lb{9.16}
\f{1}{n}\, \abs{K_n(x+iy,x+iy)} \leq C_{1,\delta}^2 e^{2n C_{2,\delta}\abs{y}}
\end{equation}
\qed
\end{example}

The following shows the power of the variational principle:

\begin{theorem}\lb{T9.4} Let
\begin{equation}  \lb{9.16a}
d\mu = w(x)\, dx + d\mu_\s
\end{equation}
Suppose for some $x_0,\delta$, we have
\[
w(x) \geq c>0
\]
for $x\in [x_0-\delta, x_0 + \delta]$. Then for any $\delta' <\delta$ and all $x\in [x_0-\delta',
x_0 + \delta']$, we have for all $a$ real,
\begin{equation}  \lb{9.17}
\f{1}{n}\, K_n \biggl( x+ \f{ia}{n}, x + \f{ia}{n}\biggr) \leq C_1 e^{C_2 \abs{a}}
\end{equation}
\end{theorem}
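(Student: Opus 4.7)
The plan is to compare $\mu$ from below with a suitable translated--rescaled Chebyshev measure and then invoke the explicit bound of Example~\ref{E9.5} via the Christoffel variational principle. All three ingredients needed are already in hand: the monotonicity \eqref{9.8}, the scaling identity \eqref{1.8a}, and the exponential estimate \eqref{9.16}.

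First, since $d\mu \geq w(x)\,dx \geq c\,\chi_{[x_0-\delta,x_0+\delta]}(x)\,dx$, Theorem~\ref{T9.3} reduces the task to bounding $\tfrac{1}{n}K_n(z,z;\nu)$ with $\nu = c\,\chi_{[x_0-\delta,x_0+\delta]}\,dx$. Next, introduce
\[
d\ti\mu_0(x) = \tfrac{2}{\pi\delta^2}\sqrt{\delta^2 - (x-x_0)^2}\,\chi_{[x_0-\delta,x_0+\delta]}(x)\,dx,
\]
which is the pullback of the Chebyshev measure of Example~\ref{E9.5} under $y = 2(x-x_0)/\delta$. Since $d\ti\mu_0/dx \leq 2/(\pi\delta)$, we have $d\nu \geq (c\pi\delta/2)\,d\ti\mu_0$, so by Theorem~\ref{T9.3} and \eqref{1.8a},
\[
K_n(z,z;\nu) \leq \tfrac{2}{c\pi\delta}\,K_n(z,z;d\ti\mu_0).
\]

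The affine change of variables identifies the orthonormal polynomials as $\ti p_n(x) = p_n(2(x-x_0)/\delta)$, where $p_n$ are the Chebyshev second-kind polynomials of Example~\ref{E9.5}; hence $K_n(z,z;d\ti\mu_0) = K_n(y,y;d\mu_0)$ with $y = 2(z-x_0)/\delta$. For $x \in [x_0-\delta',x_0+\delta']$ we have $|\Real y| \leq 2\delta'/\delta = 2-\delta_0$ with $\delta_0 = 2(1-\delta'/\delta) > 0$, and the substitution $z = x + ia/n$ gives $\Ima y = 2a/(n\delta)$. The bound \eqref{9.16} then yields
\[
\tfrac{1}{n}\,K_n(y,y;d\mu_0) \leq C_{1,\delta_0}^2\, e^{2nC_{2,\delta_0}\cdot 2|a|/(n\delta)} = C_{1,\delta_0}^2\, e^{4C_{2,\delta_0}|a|/\delta},
\]
and chaining the three inequalities produces \eqref{9.17} with explicit constants $C_1 = 2C_{1,\delta_0}^2/(c\pi\delta)$ and $C_2 = 4C_{2,\delta_0}/\delta$.

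The only real subtlety lies in the dimensional bookkeeping under the affine change of variables: the factor of $n$ in $e^{2nC_{2,\delta_0}|y|}$ from \eqref{9.16} is cancelled precisely by the $1/n$ appearing in $\Ima y = 2a/(n\delta)$. This cancellation is exactly why the kernel must be evaluated at $z = x + ia/n$ rather than at fixed height to obtain an $n$-independent bound, and it is also why the variational comparison with the Chebyshev reference measure produces the correct $|a|$-exponential growth.
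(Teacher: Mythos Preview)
Your proof is correct and follows exactly the approach of the paper: compare $\mu$ from below with a scaled and translated Chebyshev measure, then invoke Theorem~\ref{T9.3} together with the explicit bound \eqref{9.16}. The paper's proof is two sentences; you have simply written out the affine change of variables and the constant bookkeeping in full, including the intermediate comparison with the flat measure $c\,\chi_{[x_0-\delta,x_0+\delta]}\,dx$, which is a harmless extra step.
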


\begin{proof} We can find a scaled and translated version of the $d\mu_0$ of \eqref{9.13} with $\mu\geq\mu_0$.
Now use Theorem~\ref{T9.3} and \eqref{9.16}.
\end{proof}

The following has many proofs, but it is nice to have a variational one:

\begin{theorem}\lb{T9.5} Let $\mu$ be a measure on $\bbR$ of compact support. For all $x_0\in\bbR$,
\begin{equation}  \lb{9.18}
\lim_{n\to\infty} \, K_n (x_0,x_0) = \mu(\{x_0\})^{-1}
\end{equation}
\end{theorem}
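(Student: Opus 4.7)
I will use the Christoffel variational principle (Theorem~\ref{T9.2}), which tells us that
\[
K_n(x_0,x_0)^{-1}=\min\Bigl\{\int |Q_n|^2\,d\mu : Q_n(x_0)=1,\ \deg Q_n\le n\Bigr\}.
\]
Thus the claim \eqref{9.18} is equivalent to showing this minimum converges to $\mu(\{x_0\})$ as $n\to\infty$. Note that $K_n(x_0,x_0)$ is monotone increasing in $n$ (it is a sum of nonnegative terms), so $K_n(x_0,x_0)^{-1}$ is monotone decreasing and the limit exists in $[0,\infty]$; we need only pin down its value via matching one-sided bounds.

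\textbf{Lower bound.} For any admissible $Q_n$, restricting the integral to the atom at $x_0$ gives
\[
\int |Q_n|^2\,d\mu \;\ge\; |Q_n(x_0)|^2\,\mu(\{x_0\}) \;=\; \mu(\{x_0\}).
\]
Hence $K_n(x_0,x_0)^{-1}\ge \mu(\{x_0\})$ for every $n$, so $\lim_n K_n(x_0,x_0)\le \mu(\{x_0\})^{-1}$.

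\textbf{Upper bound.} I must construct, for each $\varepsilon>0$, a polynomial $Q$ (of some finite degree $N$) with $Q(x_0)=1$ and $\int|Q|^2\,d\mu\le \mu(\{x_0\})+\varepsilon$; monotonicity of $K_n(x_0,x_0)^{-1}$ in $n$ then forces $\lim_n K_n(x_0,x_0)^{-1}\le \mu(\{x_0\})+\varepsilon$, and letting $\varepsilon\downarrow0$ finishes the proof. To build $Q$, first choose $\delta>0$ so small that $\mu([x_0-\delta,x_0+\delta])<\mu(\{x_0\})+\varepsilon/2$, which is possible by continuity of $\mu$ from above at the point $\{x_0\}$. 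Pick a continuous $g\colon\mathbb{R}\to[0,1]$ with $g(x_0)=1$ and $\supp g\subset[x_0-\delta,x_0+\delta]$. Since $\supp\mu$ is compact, by the Weierstrass approximation theorem there is a polynomial $P$ with $\|P-g\|_{\infty,\supp\mu}<\eta$ for any prescribed $\eta>0$. Then $P(x_0)\in(1-\eta,1+\eta)$, and setting $Q:=P/P(x_0)$ yields $Q(x_0)=1$, while
\[
\int|Q|^2\,d\mu \;\le\; \frac{1}{(1-\eta)^2}\int(g+\eta)^2\,d\mu \;\le\; \frac{\mu([x_0-\delta,x_0+\delta]) + 2\eta\,\mu(\supp\mu) + \eta^2\mu(\supp\mu)}{(1-\eta)^2}.
\]
Choosing $\eta$ sufficiently small (depending on $\varepsilon$ and the total mass of $\mu$) makes the right-hand side $\le \mu(\{x_0\})+\varepsilon$, as required.

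\textbf{Main obstacle.} There is no serious obstacle; the whole proof rests on the variational principle plus a standard Weierstrass approximation of a bump function. The only subtle point is the choice of $g$ and the quantitative tracking of the error when passing from $g$ to the polynomial $P$ and then normalizing at $x_0$: one must be careful that the normalization factor $P(x_0)^{-1}$ does not blow up the contribution $\mu([x_0-\delta,x_0+\delta])$ by more than a factor $1+O(\eta)$, which is why one takes $\eta$ very small compared to $\varepsilon$ and compared to the total mass $\mu(\supp\mu)$. Compactness of $\supp\mu$ is used only to invoke the Weierstrass theorem; the result extends to unbounded support whenever polynomials are dense in $L^2(d\mu)$, but that extension is outside what is being asserted here.
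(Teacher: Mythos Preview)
Your argument is correct, modulo one small technical slip: you apply Weierstrass on $\supp\mu$ and then assert $P(x_0)\in(1-\eta,1+\eta)$, but nothing in the statement guarantees $x_0\in\supp\mu$. If $x_0\notin\supp\mu$ then uniform closeness of $P$ to $g$ on $\supp\mu$ says nothing about $P(x_0)$. The fix is trivial: approximate $g$ uniformly on a compact interval containing $\supp\mu\cup\{x_0\}$ (or simply on the convex hull of that set); then both the bound on $\supp\mu$ and the control of $P(x_0)$ follow.

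With that fix, your proof is complete but takes a genuinely different route from the paper. The paper never invokes Weierstrass; instead it writes down an explicit family of trial polynomials
\[
Q_{2n}(x)=\biggl(1-\frac{(x-x_0)^2}{A^2}\biggr)^{n},
\]
with $A$ chosen so that $|x-x_0|\le A$ on $\sigma(d\mu)$. These satisfy $Q_{2n}(x_0)=1$, $|Q_{2n}|\le 1$ on $\sigma(d\mu)$, and $\sup_{|x-x_0|\ge a}|Q_{2n}(x)|\to 0$ for each $a>0$, which immediately yields $\liminf K_n(x_0,x_0)\ge\mu((x_0-a,x_0+a))^{-1}$ and hence the result by letting $a\downarrow 0$. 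Your soft approach via bump functions is arguably cleaner and, as you note, generalizes at once to any measure for which polynomials are dense in $L^2(d\mu)$. The paper's explicit choice, on the other hand, is not incidental: the same localizing factor $(1-(x-x_0)^2/A^2)^{[\varepsilon n]}$ reappears in Section~\ref{s16} as the \emph{Nevai trial polynomial}, where having an explicit polynomial of controlled degree with exponential decay away from $x_0$ is essential. So the paper's proof doubles as a warm-up for later machinery, while yours is a self-contained density argument.
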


\begin{remark} If $\mu(\{x_0\})=0$, the limit is infinite.
\end{remark}

\begin{proof} Clearly, if $Q(x_0)=1$, $\int \abs{Q_n(x)}^2\, d\mu \geq \mu(\{x_0\})$, so
\begin{equation}  \lb{9.19}
K_n(x_0,x_0) \leq \mu(\{x_0\})^{-1}
\end{equation}
On the other hand, pick $A\geq\diam(\sigma(d\mu))$ and let
\begin{equation}  \lb{9.20}
Q_{2n}(x) = \biggl( 1-\f{(x-x_0)^2}{A^2}\biggr)^n
\end{equation}
For any $a$,
\begin{equation}  \lb{9.21}
\sup_{\substack{\abs{x-x_0} \geq a \\ x\in\sigma(d\mu)}} \abs{Q_{2n}(x)}\equiv M_{2n}(a) \to 0
\end{equation}
so, since $Q_{2n}\leq 1$ on $\sigma(d\mu)$,
\begin{equation}  \lb{9.22}
K_n(x_0,x_0)\geq [\mu((x_0-a, x_0+a))+ M_{2n}(a)]^{-1}
\end{equation}
so
\begin{equation}  \lb{9.23}
\liminf\, K_n(x_0,x_0) \geq [\mu((x_0-a, x_0 +a))]
\end{equation}
for each $a$. Since $\lim_{a\downarrow 0} \mu((x_0-a, x_0+a))=\mu(\{x_0\})$,
\eqref{9.19} and \eqref{9.23} imply \eqref{9.18}.
\end{proof}

%%%%%%%%%%%%%%%%%%%%%%%%%%%%%%%%%%%%%%%%%%%%
\section{The Nevai Class: An Aside} \lb{s10}
%%%%%%%%%%%%%%%%%%%%%%%%%%%%%%%%%%%%%%%%%%%%

In his monograph, Nevai \cite{Nev79} emphasized the extensive theory that can be developed for OPRL
measures whose Jacobi parameters obey
\begin{equation}  \lb{10.1}
a_n\to a \qquad b_n\to b
\end{equation}
for some $b$ real and $a>0$. He proved such measures have ratio asymptotics, that is, $P_{n+1}
(z)/P_n(z)$ has a limit for all $z\in\bbC\setminus\bbR$, and Simon \cite{S290} proved a converse:
Ratio asymptotics at one point of $\bbC_+$ implies there are $a,b,$ with \eqref{10.1}. The
essential spectrum for such a measure is $[b-2a,b+2a]$, so the Nevai class is naturally
associated with a single interval $\fre\subset\bbR$.

The question of what is the proper analog of the Nevai class for a set $\fre$ of the form
\begin{equation}  \lb{10.2}
\fre=[\alpha_1,\beta_1]\cup [\alpha_2,\beta_2]\cup \dots [\alpha_{\ell+1},\beta_{\ell+1}]
\end{equation}
with
\begin{equation}  \lb{10.3}
\alpha_1 <\beta_1 < \cdots < \alpha_{\ell+1} < \beta_{\ell+1}
\end{equation}
has been answered recently and is relevant below.

The key was the realization of L\'opez \cite{BRLL,BHLL} that the proper analog of an arc of a
circle was $\abs{\alpha_n}\to a$ and $\bar\alpha_{n+1} \alpha_n \to a^2$ for some $a>0$. This
is not that $\alpha_n$ approaches a fixed sequence but rather that for each $k$,
\begin{equation}  \lb{10.4}
\min_{e^{i\theta}\in\partial\bbD}\, \sum_{j=n}^{n+k} \, \abs{\alpha_j -ae^{i\theta}} \to 0
\end{equation}
as $n\to\infty$. Thus, $\alpha_j$ approaches a set of Verblunsky coefficients rather than a
fixed one.

For any finite gap set $\fre$ of the form \eqref{10.2}/\eqref{10.3}, there is a natural torus,
$\calJ_\fre$, of almost periodic Jacobi matrics with $\sigma_\ess (J) =\fre$ for all $J\in\calJ_\fre$.
This can be described in terms of minimal Herglotz functions \cite{SY,Rice} or reflectionless
two-sided Jacobi matrices \cite{Rem}. All $J\in\calJ_\fre$ are periodic if and only if each
$[\alpha_j,\beta_j]$ has rational harmonic measure. In this case, we say $\fre$ is periodic.

\begin{definition}
\begin{gather}
d_m(\{a_n,b_n\}_{n=1}^\infty, \{\ti a_n,\ti b_n\}_{n=1}^\infty) =
\sum_{j=0}^\infty e^{-j} (\abs{a_{m+j} - \ti a_{m+j}} + \abs{b_{m+j} - \ti b_{m+j}}) \lb{10.5} \\
d_m (\{a_n,b_n\},\calJ_\fre) = \min_{J\in\calJ_\fre}\, d_m (\{a_n,b_n\},J) \lb{10.6}
\end{gather}
\end{definition}

\begin{definition} The {\it Nevai class\/} for $\fre$, $N(\fre)$, is the set of all Jacobi matrices,
$J$, with
\begin{equation}  \lb{10.7}
d_m (J,\calJ_\fre)\to 0
\end{equation}
as $m\to\infty$.
\end{definition}

This definition is implicit in Simon \cite{OPUC2}; the metric $d_m$ is from \cite{DKS}. Notice that
in case of a single gap $\fre$ in $\partial\bbD$, the isospectral torus is the set of
$\{\alpha_n\}_{n=0}^\infty$ with $\alpha_n =ae^{i\theta}$ for all $n$ where $a$ is $\fre$ dependent
and fixed and $\theta$ is arbitrary. The above definition is the L\'opez class.

That this is the ``right'' definition is seen by the following pair of theorems:

\begin{theorem}[Last--Simon \cite{S304}]\lb{T10.1} If $J\in N(\fre)$, then
\begin{equation}  \lb{10.8}
\sigma_\ess (J)=\fre
\end{equation}
\end{theorem}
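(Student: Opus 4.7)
The plan is to combine the right-limit characterization of essential spectrum (Last--Simon \cite{LastS}) with the compactness and shift-invariance of $\calJ_\fre$. Throughout, each element of $\calJ_\fre$ will be identified with its canonical two-sided extension, which is almost periodic and reflectionless with spectrum $\fre$ (see \cite{SY,Rice,Rem}).

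A \emph{right limit} of $J$ is a two-sided Jacobi matrix $\tilde J$ on $\ell^2(\bbZ)$ for which there is a sequence $n_k\to\infty$ with $a_{n_k+j}\to\tilde a_j$ and $b_{n_k+j}\to\tilde b_j$ for every $j\in\bbZ$; write $\calR(J)$ for the set of right limits. For bounded $J$, Last--Simon prove
\[
\sigma_\ess(J) \;=\; \bigcup_{\tilde J\in\calR(J)} \sigma(\tilde J),
\]
and $\calR(J)\neq\emptyset$. Since every element of $\calJ_\fre$ has spectrum $\fre$ by construction, it will suffice to show $\calR(J)\subseteq\calJ_\fre$. Boundedness of $J$ itself is automatic: $\calJ_\fre$ is a compact set of Jacobi matrices with uniformly bounded parameters, and $d_m(J,\calJ_\fre)\to 0$ forces the tail of $J$ to be bounded.

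The main calculation is the identification of right limits. Fix $n_k\to\infty$ and select $J^{(k)}\in\calJ_\fre$ with $d_{n_k}(J,J^{(k)})\le d_{n_k}(J,\calJ_\fre)+1/k$, so $d_{n_k}(J,J^{(k)})\to 0$. Write $\{a^{(k)}_n,b^{(k)}_n\}$ for the parameters of $J^{(k)}$, and let $K^{(k)}$ be the $n_k$-shift of $J^{(k)}$---the two-sided matrix whose $j$-th parameter pair is $(a^{(k)}_{n_k+j},b^{(k)}_{n_k+j})$. Because shifting a reflectionless matrix with spectrum $\fre$ produces another such matrix, $K^{(k)}\in\calJ_\fre$. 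By compactness of the finite-dimensional torus $\calJ_\fre$, pass to a subsequence along which $K^{(k)}\to K^\infty\in\calJ_\fre$ coefficient-wise. For each fixed $j\ge 0$, the definition of $d_{n_k}$ gives the estimate
\[
|a_{n_k+j}-a^{(k)}_{n_k+j}|+|b_{n_k+j}-b^{(k)}_{n_k+j}|\;\le\; e^j\, d_{n_k}(J,J^{(k)})\;\longrightarrow\;0,
\]
so a further diagonal extraction produces a right limit $\tilde J\in\calR(J)$ agreeing with $K^\infty$ at every $j\ge 0$.

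The hard part is promoting this half-line agreement to equality $\tilde J=K^\infty$ on all of $\bbZ$---for $j<0$ the estimate above does not directly apply. Here the reflectionless theory is decisive: every element of $\calJ_\fre$ is uniquely determined by its restriction to any half-line, so it suffices to show $\tilde J$ itself is reflectionless on $\fre$. That can be extracted from Remling's theorem \cite{Rem} once one separately establishes $\sigma_\ess(J)\subseteq\fre$; this inclusion is proved directly by localizing a Weyl sequence for $J$ at $\lambda\in\sigma_\ess(J)$ via weak convergence and finite-support truncation to an interval $[N_k,N_k+L_k]$ with $L_k$ growing slowly relative to $-\log d_{N_k}(J,\calJ_\fre)$, so that the truncated vector is an approximate eigenvector for the $J^{(k)}\in\calJ_\fre$ of the previous paragraph, forcing $\lambda\in\sigma(J^{(k)})=\fre$. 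With $\calR(J)\subseteq\calJ_\fre$ in hand, the Last--Simon identity yields $\sigma_\ess(J)=\fre$.
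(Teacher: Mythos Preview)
The paper does not actually prove Theorem~\ref{T10.1}; it merely states the result and attributes it to Last--Simon \cite{S304}. So there is no ``paper's own proof'' to compare against, and your sketch should be read as an attempt to reconstruct the argument of \cite{S304}.

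Your overall strategy---invoke the right-limit characterization $\sigma_\ess(J)=\bigcup_{\tilde J\in\calR(J)}\sigma(\tilde J)$ and show $\calR(J)\subseteq\calJ_\fre$---is the right one, and is essentially what \cite{S304} does. (Note: that characterization is the main theorem of \cite{S304}, not of \cite{LastS}; you have the citation wrong.) But the part you label ``the hard part'' is not hard at all, and your proposed fix via Remling's theorem \cite{Rem} is a large detour. You chose approximants $J^{(k)}$ using $d_{n_k}$, which only controls indices $n_k+j$ with $j\ge 0$; to get $j<0$, simply approximate using $d_{n_k-L}$ instead. Since $n_k-L\to\infty$ for each fixed $L$, one still has $d_{n_k-L}(J,\calJ_\fre)\to 0$, and the same estimate you wrote now yields
\[
|a_{n_k+j}-a^{(k)}_{n_k+j}|+|b_{n_k+j}-b^{(k)}_{n_k+j}|\le e^{j+L}\,d_{n_k-L}(J,J^{(k)})\to 0\qquad(-L\le j\le L).
\]
Shift-invariance and compactness of $\calJ_\fre$ then give $\tilde J\in\calJ_\fre$ directly, with no appeal to reflectionless theory, half-line determinism, or a separate Weyl-sequence argument for the inclusion $\sigma_\ess(J)\subseteq\fre$. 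Invoking Remling here is also uncomfortably close to circular, since \cite{Rem} is what the paper cites for the \emph{converse} direction (Theorem~\ref{T10.2}).
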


\begin{theorem}[\cite{DKS} for periodic $\fre$'s; \cite{Rem} in general]\lb{T10.2} If
\[
\sigma_\ess (J)=\sigma_\ac (J)=\fre
\]
then $J\in N(\fre)$.
\end{theorem}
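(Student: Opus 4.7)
The plan is to argue by contradiction using right limits of $J$. Suppose $J \notin N(\fre)$, so there exist $\epsilon > 0$ and a subsequence $m_k \to \infty$ with $d_{m_k}(J, \calJ_\fre) \geq \epsilon$. Because $\sigma_\ess(J) = \fre$ is compact, the Jacobi parameters of $J$ are bounded, and I would extract---via a diagonal argument exploiting the exponential weighting in \eqref{10.5}---a further subsequence $m_{k_j}$ along which the shifted sequences $\{a_{m_{k_j}+n}, b_{m_{k_j}+n}\}_n$ converge entrywise to a two-sided sequence $\{a_n^\#, b_n^\#\}_{n \in \bbZ}$. These parameters define a whole-line Jacobi matrix $J^\#$, a so-called right limit of $J$. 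Continuity of $\dist(\dott,\calJ_\fre)$ in this topology would then give $\dist(J^\#,\calJ_\fre) \geq \epsilon$, so in particular $J^\# \notin \calJ_\fre$; the goal is to derive a contradiction by showing $J^\#$ must in fact lie in $\calJ_\fre$.

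For this I would invoke two ingredients. First, any right limit satisfies $\spec(J^\#) \subseteq \sigma_\ess(J) = \fre$; this is a standard soft fact coming from strong resolvent convergence of suitable translates of $J$ to $J^\#$ together with the definition of essential spectrum. Second---and this is the substantive input---Remling's theorem \cite{Rem} says that when $\sigma_\ac(J) = \fre$, every right limit $J^\#$ is \emph{reflectionless on $\fre$}, in the sense that the half-line $m$-functions of $J^\#$ from $+\infty$ and $-\infty$ satisfy $m_+(x+i0) = -\overline{m_-(x+i0)}$ for Lebesgue a.e.\ $x \in \fre$.

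Combining these, $J^\#$ is a whole-line Jacobi matrix whose spectrum sits inside $\fre$ and which is reflectionless on $\fre$. The characterization of the isospectral torus---via minimal Herglotz functions in \cite{SY,Rice}, or directly via the reflectionless property as in \cite{Rem}---identifies $\calJ_\fre$ as \emph{exactly} the set of such whole-line Jacobi matrices. Hence $J^\# \in \calJ_\fre$, contradicting the lower bound on $\dist(J^\#,\calJ_\fre)$, and completing the argument. The principal obstacle is Remling's theorem itself, whose proof hinges on a delicate analysis of the dynamics of Weyl disks and $m$-functions under translation; everything else in the plan is soft compactness plus the known structure theory of $\calJ_\fre$. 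In the periodic case treated by \cite{DKS}, one can avoid Remling entirely by using the discriminant polynomial and the magic formula to reduce the problem to the Nevai class for a single interval, where Denisov--Rakhmanov-type theorems apply.
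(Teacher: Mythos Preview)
The paper does not supply its own proof of this theorem: Section~\ref{s10} is an aside that merely states the result and attributes it to \cite{DKS} in the periodic case and to Remling \cite{Rem} in general. So there is no in-paper argument to compare against; the relevant benchmark is the cited literature.

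Your outline is precisely the skeleton of Remling's argument: pass to a right limit $J^\#$ along a subsequence witnessing failure of $d_m(J,\calJ_\fre)\to 0$, use $\spec(J^\#)\subseteq\sigma_\ess(J)=\fre$ (Last--Simon), invoke Remling's theorem to conclude $J^\#$ is reflectionless on $\fre$, and then identify $J^\#$ with an element of $\calJ_\fre$. One small point worth tightening: the characterization of $\calJ_\fre$ as reflectionless whole-line matrices is usually stated for those with spectrum \emph{equal} to $\fre$, not merely contained in it. You have only argued $\spec(J^\#)\subseteq\fre$. The missing half is immediate, though: reflectionlessness on $\fre$ forces purely a.c.\ spectrum filling $\fre$ (the diagonal Green's function is purely imaginary a.e.\ on $\fre$, hence the spectral measure has a.c.\ part supported on all of $\fre$), so $\fre\subseteq\spec(J^\#)$ as well. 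With that addendum your sketch is correct, and your remark about the \cite{DKS} route via the discriminant and a Denisov--Rakhmanov argument is also accurate.
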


%%%%%%%%%%%%%%%%%%%%%%%%%%%%%%%%%%%%%%%%%%%%%%%%%%%%%%%%%%%%%%%
\section{Delta Function Limits of Trial Polynomials} \lb{s11}
%%%%%%%%%%%%%%%%%%%%%%%%%%%%%%%%%%%%%%%%%%%%%%%%%%%%%%%%%%%%%%%

Intuitively, the minimizer, $Q_n(x,x_0)$, in the Christoffel variational principle must be $1$ at
$z_0$ and should try to be small on the rest of $\sigma(d\mu)$. As the degree gets larger and
larger, one expects it can do this better and better. So one might guess that for every $\delta >0$,
\begin{equation}  \lb{11.1}
\sup_{\substack{\abs{x-x_0}>\delta \\ x\in \sigma(d\mu)}} \abs{Q_n(x,x_0)} \to 0
\end{equation}
While this happens in many cases, it is too much to hope for. If $x_1\in\sigma(d\mu)$ but $\mu$ has
very small weight near $x_1$, then it may be a better strategy for $Q_n$ not to be small very near
$x_1$. Indeed, we will see (Example~\ref{E11.3}) that the $\sup$ in \eqref{11.1} can go to infinity.
What is more likely is to expect that $\abs{Q_n(x,x_0)}^2\, d\mu$ will be concentrated near $x_0$.
We normalize this to define
\begin{equation}  \lb{11.2}
d\eta_n^{(x_0)}(x) = \f{\abs{Q_n(x,x_0)}^2\, d\mu(x)}{\int \abs{Q_n(x,x_0)}^2\, d\mu(x)}
\end{equation}
so, by \eqref{9.6}/\eqref{9.7}, in the OPRL case,
\begin{equation}  \lb{11.3}
d\eta_n^{(x_0)}(x) = \f{\abs{K_n(x,x_0)}^2}{K_n(x,x_0)}\, d\mu(x)
\end{equation}
We say $\mu$ obeys the Nevai $\delta$-convergence criterion if and only if, in the sense of
weak (aka vague) convergence of measures,
\begin{equation}  \lb{11.4}
d\eta_n^{(x_0)}(x)\to \delta_{x_0}
\end{equation}
the point mass at $x_0$. In this section, we will explore when this holds.

Clearly, if $x_0\notin\sigma(d\mu)$, \eqref{11.4} cannot hold. We saw, for OPUC with $d\mu = d\theta/2\pi$
and $z\notin\partial\bbD$, the limit was a Poisson measure, and similar results should hold for suitable
OPRL. But we will see below (Example~\ref{E11.2}) that even on $\sigma(d\mu)$, \eqref{11.4} can fail.
The major result below is that for Nevai class on $\fre^\intt$, it does hold. We begin with an equivalent
criterion:

\begin{definition} We say Nevai's lemma holds if
\begin{equation}  \lb{11.5}
\lim_{n\to\infty}\, \f{\abs{p_n(x_0)}^2}{K_n(x_0,x_0)} =0
\end{equation}
\end{definition}

\begin{theorem}\lb{T11.1} If $d\mu$ is a measure on $\bbR$ with bounded support and
\begin{equation}  \lb{11.6}
\inf_n\, a_n >0
\end{equation}
then for any fixed $x_0\in\bbR$,
\[
\eqref{11.4} \Leftrightarrow \eqref{11.5}
\]
\end{theorem}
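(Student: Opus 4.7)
\medskip
\noindent\textbf{Proof plan.}  The plan is to run everything through a single moment identity for $d\eta_n^{(x_0)}$ obtained from the CD formula, and then pass between weak convergence and control of the relevant ratio by elementary arguments using compact support and $\inf a_n>0$.

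\smallskip
\emph{Step 1: Compute the second moment of $d\eta_n^{(x_0)}$ about $x_0$.}  Starting from the CD formula \eqref{3.1},
\[
(x-x_0)K_n(x,x_0)=-a_{n+1}\bigl[p_{n+1}(x_0)p_n(x)-p_n(x_0)p_{n+1}(x)\bigr].
\]
Squaring, integrating against $d\mu$, and using the orthonormality of $p_n,p_{n+1}$ (so the cross term drops), I get
\[
\int (x-x_0)^2\,|K_n(x,x_0)|^2\,d\mu(x)=a_{n+1}^2\bigl[|p_{n+1}(x_0)|^2+|p_n(x_0)|^2\bigr].
\]
Dividing by $K_n(x_0,x_0)$ gives the key identity
\begin{equation}\label{plan:moment}
\int (x-x_0)^2\,d\eta_n^{(x_0)}(x)=\frac{a_{n+1}^2\bigl[|p_{n+1}(x_0)|^2+|p_n(x_0)|^2\bigr]}{K_n(x_0,x_0)}.
\end{equation}

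\smallskip
\emph{Step 2: $\eqref{11.5}\Rightarrow\eqref{11.4}$.}  Nevai's lemma says $|p_n(x_0)|^2/K_n(x_0,x_0)\to 0$, which, via $K_n=K_{n-1}+|p_n|^2$, is equivalent to $K_{n-1}(x_0,x_0)/K_n(x_0,x_0)\to 1$.  Hence also $|p_{n+1}(x_0)|^2/K_n(x_0,x_0)=\bigl(K_{n+1}/K_n\bigr)\cdot|p_{n+1}|^2/K_{n+1}\to 0$.  Since $\supp(d\mu)$ is bounded, the Jacobi matrix is bounded and so $\sup_n a_n<\infty$; hence the right side of \eqref{plan:moment} tends to $0$.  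By Chebyshev, for every $\delta>0$,
\[
\eta_n^{(x_0)}(\{x:|x-x_0|\geq\delta\})\leq\delta^{-2}\int (x-x_0)^2\,d\eta_n^{(x_0)}\to 0,
\]
which on the compact set $\supp(d\mu)$ is exactly weak convergence $d\eta_n^{(x_0)}\to\delta_{x_0}$.

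\smallskip
\emph{Step 3: $\eqref{11.4}\Rightarrow\eqref{11.5}$.}  The function $x\mapsto(x-x_0)^2$ is continuous and bounded on $\supp(d\mu)$, so weak convergence yields $\int(x-x_0)^2\,d\eta_n^{(x_0)}\to 0$.  Using \eqref{plan:moment} once more and the hypothesis $\inf_n a_n>0$ (together with $\sup_n a_n<\infty$ from compact support), I conclude
\[
\frac{|p_{n+1}(x_0)|^2+|p_n(x_0)|^2}{K_n(x_0,x_0)}\to 0,
\]
and dropping the $|p_{n+1}(x_0)|^2$ term gives \eqref{11.5}.

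\smallskip
\emph{Where the work lives.}  The identity \eqref{plan:moment} does essentially all the work, and the only place both hypotheses (boundedness of support and $\inf_n a_n>0$) are used is to convert $a_{n+1}^2$ into something comparable to a fixed positive constant in both directions; the possible subtlety is making sure that Nevai's lemma applied at index $n$ controls $|p_{n+1}(x_0)|^2/K_n(x_0,x_0)$, which is handled by the simple ratio argument $K_{n+1}/K_n\to 1$ in Step 2.
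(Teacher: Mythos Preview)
Your proof is correct and follows essentially the same route as the paper's: both hinge on the second-moment identity \eqref{plan:moment} (the paper's \eqref{11.9x}), the observation that \eqref{11.5} is equivalent to $K_{n-1}(x_0,x_0)/K_n(x_0,x_0)\to 1$ and hence to the two-term ratio $(|p_n(x_0)|^2+|p_{n+1}(x_0)|^2)/K_n(x_0,x_0)\to 0$, and the fact that on a fixed compact support weak convergence to $\delta_{x_0}$ is equivalent to vanishing of the second moment. The only difference is organizational---you derive the moment identity first and then run both implications through it, while the paper first packages \eqref{11.5} into the two-term form \eqref{11.9} and then invokes the identity.
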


\begin{remark} That \eqref{11.5} $\Rightarrow$ \eqref{11.4} is in Nevai \cite{Nev79}.
The equivalence is a result of Breuer--Last--Simon \cite{BLSprep}.
\end{remark}

\begin{proof} Since
\begin{gather}
1 - \f{K_{n-1}(x_0,x_0)}{K_n(x_0,x_0)} = \f{\abs{p_n(x_0)}^2}{K_n(x_0,x_0)}  \lb{11.7} \\
\eqref{11.5} \Leftrightarrow \f{K_{n-1}(x_0,x_0)}{K_n(x_0,x_0)} \to 1 \lb{11.8}
\end{gather}
so
\[
\eqref{11.5} \Rightarrow \f{\abs{p_{n+1}(x_0)}^2}{K_n(x_0,x_0)} =
\f{\abs{p_{n+1}(x_0)}^2}{K_{n+1}(x_0,x_0)}\, \f{K_{n+1}(x_0,x_0)}{K_n(x_0,x_0)}\to 0
\]
We thus conclude
\begin{equation}  \lb{11.9}
\eqref{11.5} \Leftrightarrow \f{\abs{p_n(x_0)}^2+ \abs{p_{n+1}(x_0)}^2}{K_n(x_0,x_0)} \to 0
\end{equation}

By the CD formula and orthonormality of $p_j(x)$,
\begin{equation}  \lb{11.9x}
\int\abs{x-x_0}^2 \abs{K_n(x,x_0)}^2\, d\mu = a_{n+1}^2 [p_n(x_0)^2 + p_{n+1}(x_0)^2]
\end{equation}
so, by \eqref{11.6} and \eqref{11.9x},
\[
\int \abs{x-x_0}^2 \, d\eta_n^{(x_0)}(x) \to 0 \Leftrightarrow \eqref{11.5}
\]
when $a_n$ is uniformly bounded above and away from zero. But since $d\eta_n$ have support in a
fixed interval,
\[
\eqref{11.4} \Leftrightarrow \int \abs{x-x_0}^2\, d\eta_n^{(x_0)} \to 0
\qedhere
\]
\end{proof}

\begin{example}\lb{E11.2} Suppose at some point $x_0$, we have
\begin{equation}  \lb{11.10}
\lim_{n\to\infty}\, (\abs{p_n(x_0)}^2 + \abs{p_{n+1}(x_0)}^2)^{1/n} \to A>1
\end{equation}
We claim that
\begin{equation}  \lb{11.11}
\limsup_{n\to\infty}\, \f{\abs{p_n(x_0)}^2}{K_n(x_0,x_0)} >0
\end{equation}
for if \eqref{11.11} fails, then \eqref{11.5} holds and, by \eqref{11.7}, for any $\veps$,
we can find $N_0$ so for $n\geq N_0$,
\begin{equation}  \lb{11.12}
K_{n+1} (x_0,x_0) \leq (1+\veps) K_n (x_0,x_0)
\end{equation}
so
\[
\lim\, K_n(x_0,x_0)^{1/n} \leq 1
\]
So, by \eqref{11.5}, \eqref{11.10} fails. Thus, \eqref{11.10} implies that \eqref{11.5} fails,
and so \eqref{11.4} fails.
\qed
\end{example}

\begin{remark} As the proof shows, rather than a limit in \eqref{11.11}, we can have a
$\liminf > 1$.
\end{remark}

The first example of this type was found by Szwarc \cite{Szwarc}. He has a $d\mu$ with pure points
at $2-n^{-1}$ but not at $2$, and so that the Lyapunov exponent at $2$ was positive but $2$ was not
an eigenvalue, so \eqref{11.10} holds. The Anderson model (see \cite{CarLac}) provides a more
dramatic example. The spectrum is an interval $[a,b]$ and \eqref{11.10} holds for a.e.\ $x\in [a,b]$.
The spectral measure in this case is supported at eigenvalues and at eigenvalues \eqref{11.8},
and so \eqref{11.4} holds. Thus \eqref{11.4} holds on a dense set in $[a,b]$ but fails for
Lebesgue a.e.\ $x_0$!

\begin{example}\lb{E11.3} A Jacobi weight has the form
\begin{equation}  \lb{11.12x}
d\mu(x) = C_{a,b} (1-x)^a (1+x)^b\, dx
\end{equation}
with $a,b >-1$. In general, one can show \cite{Szb}
\begin{equation}  \lb{11.13}
p_n(1)\sim cn^{a+1/2}
\end{equation}
so if $x_0\in (-1,1)$ where $\abs{p_n(x_0)}^2 + \abs{p_{n-1}(x_0)}^2$ is bounded above and below, one has
\[
\f{\abs{K_n(x_0,1)}}{K_n(x_0,x_0)} \sim \f{n^{a+1/2}}{n} = n^{a-1/2}
\]
so if $a>\f12$, $\abs{Q_n(x_0,1)}\to\infty$. Since $d\mu(x)$ is small for $x$ near $1$, one can
(and, as we will see, does) have \eqref{11.4} even though \eqref{11.1} fails.
\qed
\end{example}

With various counterexamples in place (and more later!), we turn to the positive results:

\begin{theorem}[Nevai \cite{Nev79}, Nevai--Totik--Zhang \cite{NTZ91}]\lb{T11.4} If $d\mu$ is a
measure in the classical Nevai class {\rm{(}}i.e., for a single interval, $\fre=[b-2a,b+2a]${\rm{)}},
then \eqref{11.5} and so \eqref{11.4} holds uniformly on $\fre$.
\end{theorem}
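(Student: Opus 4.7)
By Theorem~\ref{T11.1} (applicable since $a_n\to a>0$ forces $\inf_n a_n>0$), it suffices to prove \eqref{11.5} uniformly on $\fre$. The strategy is to bound the numerator $|p_n(x_0)|^2$ and to force the denominator $K_n(x_0,x_0)$ to infinity with quantitative control.

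\textbf{Numerator bound in the interior.} For $x_0\in(b-2a,b+2a)$, write $x_0=b+2a\cos\theta(x_0)$ with $\theta(x_0)\in(0,\pi)$. Recasting \eqref{1.5} as a first-order recursion for $(p_n,p_{n-1})^T$ gives transfer matrices $T_n(x_0)$ converging to the constant \emph{elliptic} matrix $T_\infty(x_0)$, conjugate to rotation by $\theta(x_0)$. Using a $T_\infty(x_0)$-invariant quadratic form $Q_{x_0}$, introduce Pr\"ufer amplitudes $R_n^2=Q_{x_0}((p_n,p_{n-1})^T)\asymp p_n(x_0)^2+p_{n-1}(x_0)^2$. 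A direct computation produces $R_{n+1}^2=R_n^2(1+\eta_n(x_0))$ with $\eta_n=f(\varphi_n)(|a_n-a|+|b_n-b|)+O((|a_n-a|+|b_n-b|)^2)$, where $\varphi_n$ is the Pr\"ufer phase. Averaging over a block on which $\varphi_n$ completes a full rotation (possible because $\theta(x_0)$ stays bounded away from $0$ and $\pi$ on compacts in the interior) cancels the leading oscillatory term, and yields a uniform bound $p_n(x_0)^2+p_{n-1}(x_0)^2\leq C(K)$ on each compact $K\subset(b-2a,b+2a)$.

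\textbf{Denominator lower bound.} In the Nevai class one has $\sigma_{\mathrm{ess}}(\mu)=\fre$ with $w>0$ a.e.\ on $\fre$. For any subinterval on which $w\geq c>0$, Theorem~\ref{T9.3} together with a translated-rescaled copy of the Chebyshev measure $d\mu_0$ of Example~\ref{E9.5} gives $K_n(x_0,x_0)\geq c'n$ on compacts of $(b-2a,b+2a)$. Combining with the previous step yields pointwise \eqref{11.5} in the interior, and uniformity on interior compacts follows because $p_n^2$ and $K_n$ are polynomials with estimates uniform on compacts. At any mass point $x_0\in\fre$, completeness of $\{p_j\}$ in $L^2(d\mu)$ makes $\sum_n p_n(x_0)^2$ finite, forcing $p_n(x_0)^2\to 0$, while $K_n(x_0,x_0)\to\mu(\{x_0\})^{-1}$ by Theorem~\ref{T9.5}; so \eqref{11.5} still holds.

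\textbf{Main obstacle: the endpoints.} The delicate step is uniformity up to $x_0=b\pm 2a$, where $T_\infty(x_0)$ is \emph{parabolic}, the Pr\"ufer phase ceases to rotate, and $|p_n(x_0)|^2$ may grow polynomially (cf.\ Example~\ref{E11.3}). The key is to exhibit explicit trial polynomials $Q_n$ of degree $\leq n$ with $Q_n(b\pm 2a)=1$ and $\int|Q_n|^2\,d\mu$ small enough that the Christoffel variational principle \eqref{9.6} produces a lower bound on $K_n(b\pm 2a,b\pm 2a)$ strictly beating this polynomial growth. The Nevai--Totik--Zhang construction uses suitably localized Chebyshev-type extremal polynomials at the endpoint; the technical core is to patch the interior estimates and the endpoint estimates so that the ratio $|p_n(x_0)|^2/K_n(x_0,x_0)$ decays uniformly across all of $\fre$.
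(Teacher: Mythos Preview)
The paper itself does not prove Theorem~\ref{T11.4}; it cites Nevai \cite{Nev79} for the pointwise statement and uniformity on interior compacts, and Nevai--Totik--Zhang \cite{NTZ91} for uniformity on all of $\fre$, the latter via what the remarks call ``a beautiful lemma.'' So there is no in-paper proof to match. That said, your strategy of bounding numerator and denominator \emph{separately} has a genuine gap and cannot succeed at the stated generality.

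Your Step~3 asserts that membership in the Nevai class forces $w>0$ a.e.\ on $\fre$. This is false. Take $a_n\equiv1$ and $b_n=0$ except $b_{n_k}=c_k$ along a very sparse sequence $n_k$, with $c_k\to0$ but $\sum_k c_k^2=\infty$; then $b_n\to0$ puts $\mu$ in the Nevai class for $\fre=[-2,2]$, yet the spectrum on $(-2,2)$ is purely singular continuous. For such $\mu$ there is no subinterval with $w\geq c>0$, so your comparison with a Chebyshev model via Theorem~\ref{T9.3} is unavailable. The same example defeats Step~2: with no a.c.\ spectrum, $p_n(x_0)^2+p_{n-1}(x_0)^2$ must be unbounded for Lebesgue-a.e.\ $x_0\in(-2,2)$, since boundedness on a set of positive measure would force a.c.\ spectrum there by the criterion of \cite{LastS}. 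Your Pr\"ufer averaging cancels only the leading oscillatory piece; the residual $O\bigl((|a_n-a|+|b_n-b|)^2\bigr)$ is summable only under an $\ell^2$ hypothesis on the perturbation, which the Nevai class does not supply.

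The moral is that neither $|p_n|^2$ nor $K_n$ is individually controllable in the Nevai class---only their ratio is. The actual Nevai and NTZ arguments bound $\sup_{x\in\fre}p_n(x)^2/K_n(x,x)$ directly from the three-term recurrence, via an elementary inequality relating the sequence $p_n(x)^2$ to its partial sums $K_n(x,x)$, uniformly in $x\in\fre$; no spectral information about $w$ and no boundedness of $p_n$ enter. Your Step~4 description (``localized Chebyshev-type extremal polynomials at the endpoint'') does not correspond to what \cite{NTZ91} do, and in any case is a sketch rather than a proof.
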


\begin{theorem}[Zhang \cite{Zhang}, Breuer--Last--Simon \cite{BLSprep}]\lb{T11.5} Let $\fre$ be a periodic
finite gap set and let $\mu$ lie in the Nevai class for $\fre$. Then \eqref{11.5} and so \eqref{11.4} holds
uniformly on $\fre$.
\end{theorem}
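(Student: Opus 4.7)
The plan is to derive estimates of the form
$$
|p_n(x;J)|^2 \le C(1+n^2), \qquad K_n(x,x;J) \ge c\,n\bigl(1+ n^2 \wedge \dist(x,\partial\fre)^{-2}\bigr),
$$
uniformly in $x\in\fre$, which together give \eqref{11.5} at rate $O(1/n)$, uniformly on $\fre$. (By the discussion after Theorem~\ref{T11.1}, uniform \eqref{11.5} then yields uniform \eqref{11.4}.) I would proceed by first establishing such bounds for every $\tilde J\in\calJ_\fre$ and then propagating them to the whole Nevai class via the defining approximation \eqref{10.7}.

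For $\tilde J\in\calJ_\fre$, the periodicity of $\fre$ forces $\tilde J$ to be a genuine periodic Jacobi matrix with some common period $p$, so Floquet theory applies. The $p$-step monodromy $M(x;\tilde J)$ has discriminant $\tr M(x;\tilde J)$ bounded by $2$ on $\fre$, with equality precisely at the $2(\ell+1)$ band edges. In $\fre^\intt$ the monodromy is elliptic, so its powers are uniformly bounded and hence $|p_n(x;\tilde J)|$ stays bounded; at the band edges $M(x;\tilde J)$ is parabolic (nontrivial Jordan block at eigenvalue $\pm1$) and its $k$th power grows linearly in $k$, giving $|p_n(x;\tilde J)|\le Cn$. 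Interpolating via $\dist(x,\partial\fre)$ yields the first estimate above, and compactness of $\calJ_\fre$ makes all constants uniform over the torus. For the lower bound on $K_n(x,x;\tilde J)$, one combines this pointwise information with the density-of-states convergence $\tfrac1n K_n(x,x;\tilde J)\to\rho_\fre(x)/w_{\tilde J}(x)$, where $w_{\tilde J}$ is the strictly positive a.c.~weight of the canonical spectral measure of $\tilde J$.

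To pass from $\calJ_\fre$ to $N(\fre)$, fix $m$ and choose $\tilde J_m\in\calJ_\fre$ minimizing $d_m(J,\tilde J_m)$, so $d_m(J,\tilde J_m)\to 0$. The transfer matrix of $J$ from site $m$ to site $n$ differs from the corresponding transfer matrix of $\tilde J_m$ by a telescoping product whose norm is controlled by $d_m$; because the periodic transfer matrices are only polynomially bounded in $n$, this telescoping argument preserves the upper bound on $|p_n(x;J)|^2$ uniformly in $x\in\fre$, with constants that may depend on the first $m$ parameters of $J$ but not on $n$ or $x$. For the lower bound on $K_n(x,x;J)$, I would use the Christoffel variational principle \eqref{9.6}: take a trial polynomial of degree $n$ obtained by gluing together a low-degree prefactor with a normalized copy of the periodic reproducing kernel $K_{n-m}(\cdot,x;\tilde J_m)$ shifted by $m$ sites, and use $d_m(J,\tilde J_m)\to 0$ to compare its $L^2(d\mu)$ norm with its $L^2(d\tilde\mu_m)$ norm.

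The main obstacle is the quantitative uniform control at the band edges, which is also what makes the proof genuinely harder than in the single-interval Theorem~\ref{T11.4}. There the Floquet representation degenerates (the two eigenvalues of the monodromy coalesce to $\pm1$), both $p_n$ and $K_n$ grow polynomially instead of being bounded, and the margin in $|p_n|^2/K_n$ is only $O(1/n)$. One must be careful that the constant in the approximation step does not blow up as $x$ approaches $\partial\fre$; this requires tracking how the Jordan structure at the edge interacts with the telescoping estimate, and it is the central technical point on which the uniform-on-$\fre$ conclusion hinges.
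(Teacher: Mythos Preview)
The paper does not actually prove Theorem~\ref{T11.5}; it merely states it and cites Zhang~\cite{Zhang} and Breuer--Last--Simon~\cite{BLSprep}. The surrounding remarks only record that Zhang handled the case of Jacobi parameters approaching a \emph{fixed} periodic matrix, that \cite{BLSprep} observed his argument extends to the full Nevai class ``without change,'' and (for the single-interval Theorem~\ref{T11.4}) that uniformity up to the edges comes from a ``beautiful lemma'' of \cite{NTZ91}. So there is no in-paper argument to compare your proposal against.

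Taken on its own, your outline has the right architecture---Floquet analysis on $\calJ_\fre$ followed by a perturbative transfer-matrix comparison---but the displayed inequalities do not do what you claim. Your upper bound $|p_n|^2\le C(1+n^2)$ carries no $\dist(x,\partial\fre)$ refinement while your lower bound on $K_n$ does; combining them at a fixed interior point $x$ and large $n$ gives a ratio of order $n$, not $O(1/n)$. The Chebyshev/Floquet computation in fact gives the \emph{matching} pair
\[
|p_n(x)|^2\le C\min\bigl(n^2,\dist(x,\partial\fre)^{-1}\bigr),\qquad
K_n(x,x)\ge c\,n\,\min\bigl(n^2,\dist(x,\partial\fre)^{-1}\bigr),
\]
with exponent $-1$, not $-2$ (near an edge $x=2\cos\theta$ one has $\dist\sim\theta^2$ and $|p_n|\le C/|\sin\theta|\sim C\dist^{-1/2}$). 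Your own narrative (``interpolating via $\dist$'') suggests you had this in mind, but the formulas you wrote down do not reflect it.

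More seriously, you correctly flag the perturbation step at the band edges as the crux and then leave it open. The telescoping transfer-matrix estimate at a parabolic edge is exactly where the naive argument loses uniformity, and the ``gluing'' trial polynomial for the $K_n$ lower bound is too vague to assess (what does ``shifted by $m$ sites'' mean for a polynomial in $x$, and why does its $L^2(d\mu)$ norm stay controlled near $\partial\fre$?). That gap is real; it is precisely what the cited references---and, in the single-interval case, the NTZ lemma alluded to in the paper's remarks---are there to close.
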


\begin{theorem}[Breuer--Last--Simon \cite{BLSprep}]\lb{T11.6} Let $\fre$ be a general finite gap set
and let $\mu$ lie in the Nevai class for $\fre$. Then \eqref{11.5} and so \eqref{11.4} holds uniformly
on compact subsets of $\fre^\intt$.
\end{theorem}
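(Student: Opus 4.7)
My plan is to reduce the general finite-gap case to the isospectral-torus case via compactness and a tail-approximation argument. Since Theorem~\ref{T11.1} gives \eqref{11.4}$\Leftrightarrow$\eqref{11.5}, it suffices to establish \eqref{11.5} uniformly on any compact $K \subset \fre^\intt$; I will do this by bounding $\abs{p_n(x_0)}^2 + \abs{p_{n+1}(x_0)}^2$ from above and $K_n(x_0,x_0)$ from below by a linear-in-$n$ quantity, both uniformly in $x_0 \in K$.

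For the torus, fix $J^{(0)} \in \calJ_\fre$. Since $J^{(0)}$ is almost periodic and reflectionless on $\fre$, the Lyapunov exponent vanishes identically on $\fre$, and by Kotani theory (or by direct computation via theta functions on the hyperelliptic Riemann surface attached to $\fre$) the transfer matrices satisfy $\sup_n \Norm{T_n^{(0)}(x)} \leq C(x)$ for $x \in \fre^\intt$. Because $\calJ_\fre$ is compact in the product topology and the entries of $T_n^{(0)}(x)$ depend continuously on $J^{(0)}$, this bound can be upgraded to a $C(K)$ that is uniform over the torus and uniform on compact $K \subset \fre^\intt$. This controls $\abs{p_n^{(0)}(x)}^2 + \abs{p_{n+1}^{(0)}(x)}^2$ from above. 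Moreover, every $J^{(0)} \in \calJ_\fre$ has density of states equal to the equilibrium measure $d\rho_\fre$ of $\fre$, whose Radon--Nikodym density $\rho_\fre(x)$ is continuous and strictly positive on $\fre^\intt$; standard arguments upgrade the weak convergence $\f{1}{n}K_n^{(0)}(x,x)\,dx \to d\rho_\fre$ to uniform pointwise convergence on $K$ and uniform in $J^{(0)} \in \calJ_\fre$.

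To pass to $J \in N(\fre)$, fix $\veps > 0$ and choose $m$ large so that $d_m(J, \calJ_\fre) < \veps$; let $J^{(m)} \in \calJ_\fre$ nearly attain this infimum. Write the transfer matrix $T_n(x) = T_{n,m+1}(x)\,T_m(x)$ with $T_m$ a fixed bounded initial factor and $T_{n,m+1}$ the tail of length $n-m$. The definition of $d_m$ gives $\abs{a_{m+j}-a_{m+j}^{(m)}} + \abs{b_{m+j}-b_{m+j}^{(m)}} \leq \veps\,e^{j}$ for each $j \geq 0$, so on any fixed tail window of length $L$ the parameters of $J$ match those of $J^{(m)}$ to error $O(\veps\,e^{L})$; choosing $\veps$ small compared to $e^{L}$, the tail transfer matrices of $J$ closely track those of $J^{(m)}$, which are uniformly bounded by Step~1. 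A telescoping concatenation over successively larger shifts then gives $\sup_n \Norm{T_n(x)} \leq C(K)$ uniformly in $x \in K$. The linear growth of $K_n(x_0,x_0)$ transfers analogously: the Christoffel variational principle (Theorem~\ref{T9.2}) bounds $K_n$ of $J$ below by a constant times the CD kernel of the shifted tail Jacobi matrix, which by Step~1 grows like $n\,\rho_\fre(x_0)$.

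The main obstacle is uniformity of the transfer-matrix bound across the almost-periodic torus $\calJ_\fre$. In the periodic case of Theorem~\ref{T11.5} this follows immediately from Floquet theory, but for a general almost-periodic isospectral torus one must combine the reflectionless property (for pointwise-in-time boundedness at each $J^{(0)}$) with compactness of $\calJ_\fre$ and continuity of the transfer matrices in the Jacobi parameters (for uniformity over the torus). A secondary subtlety is that the exponential weights in $d_m$ give good control on only finitely many tail parameters per $m$; this is resolved by a two-parameter limit in which for each window length $L$ one chooses $\veps$, hence $m$, smaller, using the uniform bound $C(K)$ from Step~1 as a fixed backstop for the remaining tail.
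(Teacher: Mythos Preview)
The paper does not actually prove Theorem~\ref{T11.6}; it is stated as a result of Breuer--Last--Simon \cite{BLSprep} and only the surrounding remarks are given. So there is no ``paper's own proof'' to compare against, and your proposal must be judged on its own merits.

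Your broad strategy---uniform boundedness of transfer matrices over the isospectral torus, then a tail-approximation argument to pass to the Nevai class---is the natural one and almost certainly what \cite{BLSprep} does. But two steps in your execution have genuine gaps.

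\textbf{The telescoping step fails as written.} From $d_m(J,J^{(m)})<\veps$ you correctly extract $\abs{a_{m+j}-a_{m+j}^{(m)}}+\abs{b_{m+j}-b_{m+j}^{(m)}}\le \veps e^{j}$, so on a window of length $L$ the one-step matrices of $J$ match those of $J^{(m)}$ to $O(\veps e^L)$. But your ``telescoping concatenation over successively larger shifts'' must then switch to a \emph{different} torus element $J^{(m+L)}$ on the next window, and so on. Each window contributes a factor bounded by the torus constant $C(K)$, but concatenating $n/L$ such factors gives only $C(K)^{n/L}$, which is exponential in $n$, not uniform. The torus bound $\sup_n\Norm{T_n^{(0)}(x)}\le C(K)$ applies to a \emph{single} orbit run for all time; it does not compose across orbit changes. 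What is actually needed is a cocycle perturbation argument: the one-step transfer matrices of $J$ at index $k$ converge (as $k\to\infty$) to the one-step matrices along some torus orbit, and one must show that a cocycle which is asymptotic in this sense to a \emph{uniformly bounded} cocycle is itself bounded. That requires more than block-by-block matching.

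\textbf{The lower bound on $K_n$ is not justified.} Theorem~\ref{T9.2} compares \emph{measures}, not Jacobi matrices, so ``$K_n$ of $J$ is bounded below by a constant times the CD kernel of the shifted tail Jacobi matrix'' is not a consequence of the variational principle as stated. One route is to note that once $\Norm{T_n(x)}$ is uniformly bounded and $\det T_n$ is bounded away from zero (which follows from $a_n$ bounded above and below in the Nevai class), then $\Norm{T_n(x)^{-1}}$ is bounded, forcing $\abs{p_n(x)}^2+\abs{p_{n-1}(x)}^2+\abs{q_n(x)}^2+\abs{q_{n-1}(x)}^2$ to be bounded below; combining this with the Wronskian identity one can extract a linear lower bound on $K_n+K_n^{(q)}$ and then separate the two pieces. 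But that argument has to be supplied---it is not the variational principle.
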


\begin{remarks} 1. Nevai \cite{Nev79} proved \eqref{10.4}/\eqref{10.5} for the classical Nevai class for
every energy in $\fre$ but only uniformly on compacts of $\fre^\intt$. Uniformity on all of $\fre$
using a beautiful lemma is from \cite{NTZ91}.

\smallskip
2. Zhang \cite{Zhang} proved Theorem~\ref{T11.5} for any $\mu$ whose Jacobi parameters approached a fixed
periodic Jacobi matrix. Breuer--Last--Simon \cite{BLSprep} noted that without change, Zhang's result
holds for the Nevai class.

\smallskip
3. It is hoped that the final version of \cite{BLSprep} will prove the result in Theorem~\ref{T11.6}
on all of $\fre$, maybe even uniformly in $\fre$.
\end{remarks}

\begin{example}[\cite{BLSprep}]\lb{E11.7} In the next section, we will discuss regular measures. They have
zero Lyapunov exponent on $\sigma_\ess(\mu)$, so one might expect Nevai's lemma could hold---and it will
in many regular cases. However, \cite{BLSprep} prove that if $b_n\equiv 0$ and $a_n$ is alternately $1$
and $\f12$ on successive very long blocks ($1$ on blocks of size $3^{n^2}$ and $\f12$ on blocks of
size $2^{n^2}$), then $d\mu$ is regular for $\sigma(d\mu) =[-2,2]$. But for a.e.\ $x\in [-2,2]
\setminus [-1,1]$, \eqref{10.4} and \eqref{10.3} fail.
\qed
\end{example}

\begin{conjecture}[\cite{BLSprep}]\lb{Con11.8} The following is extensively discussed in \cite{BLSprep}:
For general OPRL of compact support and a.e.\ $x$ with respect to $\mu$, \eqref{10.4} and so \eqref{10.3}
holds.
\end{conjecture}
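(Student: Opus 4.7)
The plan is to prove Nevai's lemma \eqref{11.5} at $\mu$-a.e.\ $x_0$, which via Theorem \ref{T11.1} yields the $\delta$-convergence \eqref{11.4} (for compactly supported $\mu$ the Jacobi parameters are bounded above automatically, and the degenerate case $\inf_n a_n = 0$ can be handled separately). The natural strategy is to decompose $\mu = \mu_{\rm ac} + \mu_{\rm sc} + \mu_{\rm pp}$ and exploit the spectral-theoretic information available at $\mu$-a.e.\ point of each component.

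The pure point part is immediate: if $\mu(\{x_0\}) > 0$, then $\{p_n(x_0)\}_{n\ge 0}$ is an $\ell^2$ eigenvector sequence with $\sum_n |p_n(x_0)|^2 = \mu(\{x_0\})^{-1} < \infty$, so $|p_n(x_0)|^2 \to 0$ while $K_n(x_0,x_0) \to \mu(\{x_0\})^{-1} > 0$ by Theorem \ref{T9.5}. For the absolutely continuous part, Gilbert--Pearson subordinacy asserts that at $\mu_{\rm ac}$-a.e.\ point no subordinate solution of the Jacobi recursion exists; the first-kind polynomials $p_n(x_0)$ and the second-kind polynomials $q_n(x_0)$ must then have comparable norms, and combined with constancy of the Wronskian this forces $|p_n(x_0)|^2 + |p_{n+1}(x_0)|^2$ to be bounded in $n$ while $K_n(x_0,x_0)$ grows at least linearly (essentially the Last--Simon \cite{LastS} estimate). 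So the ratio in \eqref{11.5} decays like $1/n$ here.

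The remaining, and hardest, case is the singular continuous one, where subordinate solutions do exist at $\mu_{\rm sc}$-a.e.\ $x_0$ and the individual growth rates of $p_n(x_0)$ and $K_n(x_0,x_0)$ can be genuinely subtle (cf.\ Example \ref{E11.7}). Here I would combine three ingredients: (a) the telescoping identity
\begin{equation*}
\frac{|p_n(x_0)|^2}{K_n(x_0,x_0) K_{n-1}(x_0,x_0)} = \frac{1}{K_{n-1}(x_0,x_0)} - \frac{1}{K_n(x_0,x_0)},
\end{equation*}
summing unconditionally to $\mu(\bbR) - \mu(\{x_0\})$ and yielding $|p_n(x_0)|^2/(K_n K_{n-1}) \to 0$ with no spectral hypothesis at all; (b) Schnol-type polynomial upper bounds $|p_n(x_0)|^2 \leq C n^{1+\veps}$ valid $\mu$-a.e.; and (c) Jitomirskaya--Last power-law subordinacy, which links the local scaling of $\mu_{\rm sc}$ at $x_0$ to the growth of $K_n(x_0,x_0)$ through the Markov--Stieltjes bounds of Theorem \ref{T7.2}. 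Together, these should upgrade the unconditional $|p_n|^2/(K_n K_{n-1}) \to 0$ to the desired $|p_n|^2/K_n \to 0$ at $\mu_{\rm sc}$-a.e.\ $x_0$.

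The main obstacle lies precisely in step (c): the telescoping alone does not suffice, because $K_{n-1}(x_0,x_0)$ could in principle increase only slowly along the bad subsequence while $|p_n(x_0)|^2/K_n(x_0,x_0) \in [0,1]$ clusters near $1$. Ruling out such persistent excursions requires a genuine lower bound on the growth of $K_n$ holding on a set of full $\mu_{\rm sc}$-measure, i.e.\ a quantitative refinement of power-law subordinacy keyed to $\mu_{\rm sc}$ rather than Hausdorff measures---and this is exactly the place where Example \ref{E11.7} shows that the Lebesgue-a.e.\ analogue fails, so the $\mu$-a.e.\ character of the conclusion must be used essentially.
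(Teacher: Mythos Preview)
The statement you are attempting to prove is presented in the paper as a \emph{conjecture}, not a theorem; the paper offers no proof, only a reference to \cite{BLSprep} where the problem is ``extensively discussed.'' So there is no argument in the paper to compare your proposal against, and at the time of writing the full statement was open.

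Your pure-point case is fine. The absolutely continuous case, however, contains a genuine gap. You assert that absence of a subordinate solution together with constancy of the Wronskian ``forces $|p_n(x_0)|^2 + |p_{n+1}(x_0)|^2$ to be bounded in $n$.'' This inference is not valid: non-subordinacy only gives comparability of the \emph{cumulative} norms $\sum_{j\le n}|p_j|^2$ and $\sum_{j\le n}|q_j|^2$, and the Wronskian identity $a_n(p_{n+1}q_n - p_n q_{n+1})=\text{const}$ yields only a \emph{lower} bound on $(|p_n|^2+|p_{n+1}|^2)(|q_n|^2+|q_{n+1}|^2)$, not an upper bound on either factor. Sparse-barrier models with purely a.c.\ spectrum furnish examples where the transfer matrices, and hence $|p_n|^2+|p_{n+1}|^2$, are unbounded along a subsequence even though no solution is subordinate. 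Your accompanying claim that $K_n$ grows at least linearly also needs justification beyond \cite{LastS}: Theorems~\ref{T15.1}--\ref{T15.2} give $\liminf \tfrac{1}{n}K_n <\infty$ on $\Sigma_{\ac}$, an \emph{upper} bound on the liminf, and the M\'at\'e--Nevai lower bound (Theorem~\ref{T14.1}) requires an interval inside $\sigma_\ess(\mu)$, which need not exist (e.g.\ Cantor spectrum). The conclusion $|p_n|^2/K_n\to 0$ may well hold $\mu_{\ac}$-a.e., but your argument does not establish it.

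For the singular continuous part you yourself identify the obstacle: the telescoping identity gives only $|p_n|^2/(K_nK_{n-1})\to 0$, and upgrading this to $|p_n|^2/K_n\to 0$ requires a $\mu_{\singc}$-a.e.\ lower bound on the growth of $K_n$ that is not supplied by Jitomirskaya--Last subordinacy or by Schnol bounds. Example~\ref{E11.7} shows exactly why a Lebesgue-a.e.\ argument cannot substitute here. In short, your outline isolates the right cases and the right difficulty, but does not close either the a.c.\ or the s.c.\ gap; the statement remains a conjecture.
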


%%%%%%%%%%%%%%%%%%%%%%%%%%%%%%%%%%%%%%%%%%%%%%%%%%%%%%%%%%%%%%%
\section{Regularity: An Aside} \lb{s12}
%%%%%%%%%%%%%%%%%%%%%%%%%%%%%%%%%%%%%%%%%%%%%%%%%%%%%%%%%%%%%%%

There is another class besides the Nevai class that enters in variational problems because it allows
exponential bounds on trial polynomials. It relies on notions from potential theory; see \cite{Helms,Land,Ran,Tsu}
for the general theory and \cite{StT,EqMC} for the theory in the context of orthogonal polynomials.

\begin{definition} Let $\mu$ be a measure with compact support and let $\fre=\sigma_\ess(\mu)$.
We say $\mu$ is {\it regular\/} for $\fre$ if and only if
\begin{equation} \lb{12.1}
\lim_{n\to\infty}\, (a_1\dots a_n)^{1/n} = C(\fre)
\end{equation}
the capacity of $\fre$.
\end{definition}

For $\fre=[-1,1]$, $C(\fre)=\f12$ and the class of regular measures was singled out initially by
Erd\H{o}s--Tur\'an \cite{ET} and extensively studied by Ullman \cite{Ull}. The general theory was developed
by Stahl--Totik \cite{StT}.

Recall that any set of positive capacity has an equilibrium measure, $\rho_\fre$, and Green's function,
$G_\fre$, defined by requiring $G_\fre$ is harmonic on $\bbC\setminus\fre$, $G_\fre(z)=\log\abs{z} + O(1)$
near infinity, and for quasi-every $x\in\fre$,
\begin{equation} \lb{12.2}
\lim_{z_n\to x}\, G_\fre (z_n)=0
\end{equation}
(quasi-every means except for a set of capacity $0$). $\fre$ is called regular for the Dirichlet problem
if and only if \eqref{12.2} holds for every $x\in\fre$. Finite gap sets are regular for the Dirichlet
problem.

One major reason regularity will concern us is:

\begin{theorem}\lb{T12.1} Let $\fre\subset\bbR$ be compact and regular for the Dirichlet problem. Let
$\mu$ be a measure regular for $\fre$. Then for any $\veps$, there is $\delta >0$ and $C_\veps$ so that
\begin{equation} \lb{12.2x}
\sup_{\dist(z,\fre)<\delta}\, \abs{p_n(z,d\mu)} \leq C_\veps e^{\veps\abs{n}}
\end{equation}
\end{theorem}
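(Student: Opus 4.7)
The plan is to combine two potential-theoretic tools already implicit in the regularity framework referenced in this section: (a) the Stahl--Totik characterization of regularity in terms of polynomial norms, which converts the $L^{2}(\mu)$ normalization $\Norm{p_{n}}_{L^{2}}=1$ into a sub-exponentially growing $L^{\infty}$ bound on $\fre$; and (b) the Bernstein--Walsh lemma, which propagates a sup-norm bound on $\fre$ into the complex plane at the rate dictated by the Green's function $G_{\fre}$.

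First I would exploit the fact that, by the Stahl--Totik theory (\cite{StT}), a measure $\mu$ with $\supp(\mu) \subset \fre$ is regular if and only if, for every polynomial sequence $\{Q_{n}\}$ with $\deg Q_{n}\leq n$,
\[
\limsup_{n\to\infty}\,\bigl(\Norm{Q_{n}}_{L^{\infty}(\fre)}/\Norm{Q_{n}}_{L^{2}(\mu)}\bigr)^{1/n}\leq 1.
\]
Applying this root-test inequality to $Q_{n}=p_{n}$, for which $\Norm{p_{n}}_{L^{2}(\mu)}=1$, yields the uniform sub-exponential bound: given $\veps>0$, there exists $C_{\veps}^{(1)}$ such that
\[
\Norm{p_{n}}_{L^{\infty}(\fre)}\leq C_{\veps}^{(1)}\,e^{n\veps/2}\qquad\text{for all }n\geq 0.
\]

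Next I would appeal to the Bernstein--Walsh lemma: for every polynomial $Q$ with $\deg Q\leq n$,
\[
\abs{Q(z)}\leq \Norm{Q}_{L^{\infty}(\fre)}\exp\bigl(n\,G_{\fre}(z)\bigr),\qquad z\in\bbC,
\]
where $G_{\fre}$ is the Green's function with pole at infinity. Specializing to $Q=p_{n}$ gives
\[
\abs{p_{n}(z)}\leq C_{\veps}^{(1)}\,e^{n\veps/2}\exp\bigl(n\,G_{\fre}(z)\bigr).
\]
Because $\fre$ is regular for the Dirichlet problem, $G_{\fre}$ extends continuously to $\bbC$ with $G_{\fre}\restriction\fre\equiv 0$; hence $G_{\fre}$ is uniformly continuous on any compact neighborhood of $\fre$. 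Choose $\delta>0$ so that $G_{\fre}(z)\leq \veps/2$ whenever $\dist(z,\fre)<\delta$. Combining the two displays produces $\abs{p_{n}(z)}\leq C_{\veps}^{(1)}e^{n\veps}$ on this neighborhood, which is \eqref{12.2x} with $C_{\veps}=C_{\veps}^{(1)}$.

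The main obstacle is the first step: establishing the Bernstein--Markov-type inequality $\limsup\Norm{p_{n}}_{L^{\infty}(\fre)}^{1/n}\leq 1$ from regularity. Unlike Bernstein--Walsh, which is a classical pluripotential-theoretic result independent of $\mu$, this estimate genuinely uses the hypothesis \eqref{12.1} and is one of the deeper equivalences in Stahl--Totik's characterization of regular measures. In a self-contained presentation I would isolate this as a black box (just as the text treats regularity throughout), noting only that it follows by combining the asymptotic $\kappa_{n}^{1/n}\to C(\fre)^{-1}$ with comparison against Chebyshev-type extremal polynomials on $\fre$ (which exist precisely because $\fre$ is regular for the Dirichlet problem, so that $\|T_{n}\|_{L^{\infty}(\fre)}^{1/n}\to C(\fre)$).
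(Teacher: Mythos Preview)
The paper does not prove Theorem~\ref{T12.1}; immediately after the statement it simply writes ``For proofs, see \cite{StT,EqMC}.'' So there is no in-paper argument to compare against. Your outline is correct and is precisely the route taken in those references: regularity of $\mu$ is equivalent (this is one of the Stahl--Totik characterizations) to the Bernstein--Markov property $\limsup_n (\Norm{Q_n}_{L^\infty(\fre)}/\Norm{Q_n}_{L^2(\mu)})^{1/n}\leq 1$, which specialized to $Q_n=p_n$ gives sub-exponential sup-norm growth on $\fre$; then Bernstein--Walsh transports this to $\{\dist(z,\fre)<\delta\}$ at the cost of a factor $e^{nG_\fre(z)}$, and Dirichlet regularity of $\fre$ makes $G_\fre$ uniformly small on that strip.

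One small wording issue: you write ``$\supp(\mu)\subset\fre$,'' but in the paper's setup $\fre=\sigma_\ess(\mu)$, so $\sigma(\mu)$ may contain isolated mass points outside $\fre$. This is harmless---adding a countable set accumulating only on $\fre$ does not change the logarithmic capacity, so the Stahl--Totik equivalence still applies and yields the $L^\infty$ bound on all of $\sigma(\mu)\supset\fre$; Bernstein--Walsh on $\fre$ then proceeds exactly as you wrote.
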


For proofs, see \cite{StT,EqMC}. Since $K_n$ has $n+1$ terms, \eqref{12.2x} implies
\begin{equation} \lb{12.3}
\sup_{\substack{\dist(z,\fre) <\delta \\ \dist(w,\fre)<\delta}} \abs{K_n(z,w)} \leq (n+1)
C_\veps^2 e^{2\veps\abs{n}}
\end{equation}
and for the minimum (since $K_n(z_0,z_0)\geq 1$),
\begin{equation} \lb{12.4}
\sup_{\substack{\dist(z,\fre) <\delta \\ \dist(z_0,\fre)<\delta}} \abs{Q_n(z,z_0)} \leq (n+1)
C_\veps^2 e^{2\veps\abs{n}}
\end{equation}

The other reason regularity enters has to do with the density of zeros. If $x_j^{(n)}$ are the zeros of
$p_n(x,d\mu)$, we define the zero counting measure, $d\nu_n$, to be the probability measure that gives
weight to $n^{-1}$ to each $x_j^{(n)}$. For the following, see \cite{StT,EqMC}:

\begin{theorem}\lb{T12.2} Let $\fre\subset\bbR$ be compact and let $\mu$ be a regular measure for $\fre$.
Then
\begin{equation} \lb{12.5}
d\nu_n\to d\rho_\fre
\end{equation}
the equilibrium measure for $\fre$.
\end{theorem}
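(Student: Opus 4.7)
\medskip
\noindent\textbf{Proof plan for Theorem \ref{T12.2}.}

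The strategy is to extract subsequential limits of $\{d\nu_n\}$ and identify each with $d\rho_\fre$ using logarithmic potential theory together with the two consequences of regularity already in hand: the asymptotic formula for the leading coefficient, and the Bernstein--Walsh-type upper bound in the spirit of Theorem~\ref{T12.1}. Since $p_n$ has its zeros in $\cvh(\sigma(\mu))$ (by Theorem~\ref{T4.3}), the probability measures $d\nu_n$ are all supported in a fixed compact set, so by Banach--Alaoglu every subsequence has a weak-$*$ convergent subsubsequence to some probability measure $\nu$. It suffices to show that every such limit equals $\rho_\fre$.

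The key identity comes from factoring the OPRL: writing $p_n(z) = \kappa_n \prod_{j=1}^n (z-x_j^{(n)})$ one gets
\begin{equation}
\f{1}{n} \log|p_n(z)| = \f{1}{n}\log\kappa_n + \int \log|z-w|\, d\nu_n(w).
\end{equation}
By \eqref{3.10x}, $\kappa_n = (a_1\cdots a_n)^{-1}\mu(\bbR)^{-1/2}$, so regularity \eqref{12.1} gives $\tfrac{1}{n}\log\kappa_n \to -\log C(\fre)$. Passing to a weakly convergent subsequence $d\nu_{n_k}\to d\nu$, and using that $w\mapsto \log|z-w|$ is continuous on the (fixed) compact support for $z\notin \cvh(\sigma(\mu))$, we obtain
\begin{equation}
\lim_{k\to\infty} \f{1}{n_k} \log|p_{n_k}(z)| \;=\; \int \log|z-w|\, d\nu(w) - \log C(\fre)
\end{equation}
for all such $z$. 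This shows the right-hand limit exists; call the limit $L(z)$.

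Now I would combine this with the standard potential-theoretic bounds for regular measures. The upper estimate \eqref{12.2x} (applied off a neighborhood of $\fre$ and combined with the maximum principle on $\bbC\setminus\fre$) gives
\begin{equation}
\limsup_{n\to\infty} \f{1}{n}\log|p_n(z)| \leq G_\fre(z), \qquad z\in\bbC\setminus\fre,
\end{equation}
while regularity together with the $L^2(\mu)$ normalization $\int |p_n|^2 \, d\mu = 1$ (used against a trial polynomial of large degree concentrated near $z$, in the manner of Stahl--Totik \cite{StT} and \cite{EqMC}) gives the matching lower bound
\begin{equation}
\liminf_{n\to\infty} \f{1}{n}\log|p_n(z)| \geq G_\fre(z)
\end{equation}
for quasi-every $z\in\bbC\setminus\fre$. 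Combining these with the identity above yields
\begin{equation}
\int \log|z-w|\, d\nu(w) \;=\; \log C(\fre) + G_\fre(z) \;=\; \int \log|z-w|\, d\rho_\fre(w)
\end{equation}
q.e.\ on $\bbC\setminus\fre$, where the last equality is Frostman's theorem for the equilibrium measure.

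To conclude, I would invoke the unicity theorem for logarithmic potentials: two probability measures on a common compact set $K$ whose potentials agree quasi-everywhere on $\bbC\setminus K$ (equivalently, everywhere outside $K$ by harmonicity) must be equal. Applied to $\nu$ and $\rho_\fre$, both supported in $\cvh(\sigma(\mu))$, this gives $\nu = \rho_\fre$. Since every subsequential weak-$*$ limit is $\rho_\fre$, the full sequence $d\nu_n$ converges to $d\rho_\fre$.

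The main obstacle, and the step I have glossed most lightly, is the quasi-everywhere \emph{lower} bound on $\tfrac{1}{n}\log|p_n(z)|$. The upper bound is a direct consequence of the already-quoted Theorem~\ref{T12.1}, but the lower bound is the heart of the Stahl--Totik theory: one must upgrade the $L^2$ normalization of $p_n$ to a genuine pointwise lower bound off a polar exceptional set. A secondary subtlety is ensuring that atoms of $\mu$ (and hence possible accumulation of zeros of $p_n$) outside $\fre = \sigma_\ess(\mu)$ contribute nothing to $\nu$, which follows because such zeros form at most a countable set and each carries only $1/n$ mass.
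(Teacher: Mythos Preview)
The paper does not actually prove Theorem~\ref{T12.2}; it is stated with the preamble ``For the following, see \cite{StT,EqMC},'' and no argument is given. So there is no in-paper proof to compare against. Your sketch is precisely the standard potential-theoretic route one finds in those references: pass to subsequential limits of the zero-counting measures, identify $\tfrac1n\log|p_n(z)|$ with $\tfrac1n\log\kappa_n - U^{\nu_n}(z)$, use regularity for the constant, obtain matching upper and lower bounds off $\fre$ in terms of $G_\fre$, and finish with the unicity principle for logarithmic potentials. You have also correctly isolated the genuinely delicate point: turning the $L^2(d\mu)$ normalization into a quasi-everywhere pointwise lower bound is where one must invoke the lower-envelope/principle-of-descent machinery of Stahl--Totik, and it is not elementary.

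Two technical comments. First, you lean on Theorem~\ref{T12.1} for the upper bound, but that theorem carries the extra hypothesis that $\fre$ is regular for the Dirichlet problem, which Theorem~\ref{T12.2} as stated does not assume. In the Stahl--Totik framework the inequality $\limsup_n \tfrac1n\log|p_n(z)|\le G_\fre(z)$ on $\bbC\setminus\fre$ holds for any regular $\mu$ without Dirichlet regularity of $\fre$ (Green's function still makes sense on the unbounded component; only its boundary behavior is affected), so the step is salvageable, but you should not cite Theorem~\ref{T12.1} for it. Second, your closing remark about point masses outside $\fre$ is slightly imprecise: $\sigma(\mu)\setminus\fre$ need not be finite, only countable with accumulation in $\fre$; the relevant fact is that each open gap of $\sigma(\mu)$ contains at most one zero of $p_n$, which forces any limit $\nu$ to put zero mass on $\bbR\setminus\fre$ and hence $\supp\nu\subset\fre$.
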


In \eqref{12.5}, the convergence is weak.

%%%%%%%%%%%%%%%%%%%%%%%%%%%%%%%%%%%%%%%%%%%%%%%%%%%%%%%%%%%%%%%
\section{Weak Limits} \lb{s13}
%%%%%%%%%%%%%%%%%%%%%%%%%%%%%%%%%%%%%%%%%%%%%%%%%%%%%%%%%%%%%%%

A major theme in the remainder of this review is pointwise asymptotics of $\f{1}{n+1}
K_n(x,y;d\mu)$ and its diagonal. Therefore, it is interesting that one can say something
about $\f{1}{n+1} K_n(x,x;d\mu)\, d\mu(x)$ without pointwise asymptotics. Notice that
\begin{equation} \lb{13.1}
d\mu_n(x) \equiv \f{1}{n+1}\, K_n(x,x;d\mu)\, d\mu(x)
\end{equation}
is a probability measure. Recall the density of zeros, $\nu_n$, defined after \eqref{12.4}.

\begin{theorem}\lb{T13.1} Let $\mu$ have compact support. Let $\nu_n$ be the density of zeros
and $\mu_n$ given by \eqref{13.1}. Then for any $\ell=0,1,2,\dots$,
\begin{equation} \lb{13.2}
\biggl| \int x^\ell\, d\nu_{n+1} - \int x^\ell\, d\mu_n\biggr| \to 0
\end{equation}
In particular, $d\mu_{n(j)}$ and $d\nu_{n(j)+1}$ have the same weak limits for any subsequence $n(j)$.
\end{theorem}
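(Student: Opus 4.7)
\bigskip

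\noindent\textbf{Proof proposal for Theorem~\ref{T13.1}.}

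The plan is to recognize both integrals as (normalized) traces of powers of Jacobi matrices and then exploit the locality of powers of a tridiagonal matrix.

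First I would rewrite $\int x^\ell\, d\mu_n$ spectrally. Since $d\mu_n = \frac{1}{n+1}\sum_{j=0}^n |p_j(x)|^2\, d\mu(x)$, one has
\[
\int x^\ell\, d\mu_n = \frac{1}{n+1}\sum_{j=0}^n \jap{p_j, M_x^\ell p_j}_{L^2(d\mu)} = \frac{1}{n+1}\sum_{j=0}^n (J^\ell)_{jj},
\]
where $J$ is the (infinite) Jacobi matrix of $\mu$ expressed in the orthonormal basis $\{p_j\}_{j\ge 0}$. Next, by Corollary~\ref{C6.3}, the zeros of $p_{n+1}$ are exactly the eigenvalues of the $(n+1)\times(n+1)$ truncation $J_{n+1;F}(0)$, so
\[
\int x^\ell\, d\nu_{n+1} = \frac{1}{n+1}\tr\bigl(J_{n+1;F}(0)^\ell\bigr) = \frac{1}{n+1}\sum_{j=0}^n \bigl(J_{n+1;F}(0)^\ell\bigr)_{jj}.
\]

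The key observation is then the following locality fact: because $J$ is tridiagonal, the entry $(J^\ell)_{jj}$ is a sum over lattice paths of length $\ell$ from $j$ back to $j$, each step changing the index by $0$ or $\pm 1$. Such a path stays in the interval $[j-\lfloor\ell/2\rfloor,\, j+\lfloor\ell/2\rfloor]$. Since $J_{n+1;F}(0)$ is literally the upper-left $(n+1)\times(n+1)$ block of $J$, for any index $j$ with $j+\lfloor\ell/2\rfloor\leq n$ every such path lies in $[0,n]$ and uses only matrix elements shared by $J$ and $J_{n+1;F}(0)$. Thus
\[
(J^\ell)_{jj} = \bigl(J_{n+1;F}(0)^\ell\bigr)_{jj} \qquad \text{for all } 0\le j \le n-\lfloor\ell/2\rfloor.
\]

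Finally I would bound the boundary contribution. Since $\mu$ has compact support, $J$ is bounded, and the truncation is a compression, so $\Norm{J_{n+1;F}(0)}\le \Norm{J}$. Each diagonal entry of $J^\ell$ or $J_{n+1;F}(0)^\ell$ is therefore bounded by $\Norm{J}^\ell$. At most $\ell$ indices $j\in\{0,\dots,n\}$ can fail the locality condition, so
\[
\left|\int x^\ell\, d\nu_{n+1} - \int x^\ell\, d\mu_n\right|
\;\le\; \frac{1}{n+1}\sum_{j=n-\lfloor\ell/2\rfloor+1}^{n}\! 2\Norm{J}^\ell
\;\le\; \frac{2\ell\,\Norm{J}^\ell}{n+1}\;\longrightarrow\; 0,
\]
which is \eqref{13.2}. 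The equality of weak subsequential limits follows at once: both $\{\mu_n\}$ and $\{\nu_{n+1}\}$ are probability measures supported in the fixed compact set $[-\Norm{J},\Norm{J}]$, so by Weierstrass approximation, agreement of all moment limits along a subsequence forces agreement of the weak limits.

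There is no real obstacle here; the only point to be careful about is the simple combinatorial fact that $\ell$-step walks of a tridiagonal matrix have range $\lfloor\ell/2\rfloor$, which is what makes the upper-left block of $J$ and the truncated matrix $J_{n+1;F}(0)$ yield identical diagonal entries of $(\cdot)^\ell$ away from the corner.
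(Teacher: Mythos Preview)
Your proof is correct and follows essentially the same approach as the paper: both identify $\int x^\ell\,d\nu_{n+1}$ and $\int x^\ell\,d\mu_n$ as $\frac{1}{n+1}\tr\bigl((\pi_n M_x\pi_n)^\ell\bigr)$ and $\frac{1}{n+1}\tr\bigl(\pi_n M_x^\ell\pi_n\bigr)$ (in your language, $\frac{1}{n+1}\tr\bigl(J_{n+1;F}(0)^\ell\bigr)$ and $\frac{1}{n+1}\sum_j (J^\ell)_{jj}$), and then bound the difference by $O(\ell\,\Norm{M_x}^\ell/n)$. The only cosmetic difference is that the paper observes abstractly that $(\pi_n M_x\pi_n)^\ell-\pi_n M_x^\ell\pi_n$ has rank at most $\ell$, whereas you spell this out via the path-locality of powers of a tridiagonal matrix; these are two ways of saying the same thing.
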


\begin{proof} By Theorem~\ref{T6.2}, the zeros of $P_{n+1}$ are eigenvalues of $\pi_n M_x \pi_n$, so
\begin{equation} \lb{13.3}
\int x^\ell \,d\nu_{n+1} = \f{1}{n+1}\, \tr((\pi_n M_x \pi_n)^\ell)
\end{equation}

On the other hand, since $\{p_j\}_{j=0}^n$ is a basis for $\ran(\pi_n)$,
\begin{align}
\int x^\ell\, d\mu_n &= \f{1}{n+1} \sum_{j=0}^n \int x^\ell \abs{p_j(x)}^2\, d\mu(x) \notag \\
&= \f{1}{n+1}\, \tr (\pi_n M_x^\ell \pi_n) \lb{13.4}
\end{align}

It is easy to see that $(\pi_n M_x \pi_n)^\ell - \pi_n M_x^\ell \pi_n$ is rank at most $\ell$, so
\[
\text{LHS of \eqref{13.2}} \leq \f{\ell}{n+1}\, \Norm{M_x}^\ell
\]
goes to $0$ as $n\to\infty$ for $\ell$ fixed.
\end{proof}

\begin{remark} This theorem is due to Simon \cite{Weak-cd} although the basic fact goes back to
Avron--Simon \cite{S149}.
\end{remark}

See Simon \cite{Weak-cd} for an interesting application to comparison theorems for limits of density of states.
We immediately have:

\begin{corollary} \lb{C13.2} Suppose that
\begin{equation} \lb{13.5}
d\mu = w(x)\, dx + d\mu_\s
\end{equation}
with $d\mu_\s$ Lebesgue singular, and on some open interval $I\subset\fre=\sigma_\ess (d\mu)$ we have
$d\nu_n\to d\nu_\infty$ and
\begin{equation} \lb{13.6}
d\nu_\infty \restriction I = \nu_\infty(x)\, dx
\end{equation}
and suppose that uniformly on $I$,
\begin{equation} \lb{13.7}
\lim\, \f{1}{n} K_n(x,x) = g(x)
\end{equation}
and $w(x)\neq 0$ on $I$. Then
\[
g(x) = \f{\nu_\infty(x)}{w(x)}
\]
\end{corollary}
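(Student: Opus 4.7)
The plan is to combine Theorem~\ref{T13.1} with the uniform convergence hypothesis and then extract the conclusion via uniqueness of the Lebesgue decomposition.

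First, I would promote Theorem~\ref{T13.1} from equality of moment differences to full weak convergence of $d\mu_n$. Since $\supp(d\mu)$ is compact, all the $d\mu_n$ and $d\nu_{n+1}$ are supported in a common compact set, and on such a set moment convergence is equivalent to weak convergence (Weierstrass approximation). So from $d\nu_n\to d\nu_\infty$ and \eqref{13.2} we get $d\mu_n\to d\nu_\infty$ weakly as well.

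Next I would unpack what weak convergence means when tested against a continuous $\varphi$ with $\supp(\varphi)$ a compact subset of $I$. On the one hand, by assumption and the restriction statement for $\nu_\infty$,
\begin{equation*}
\int \varphi\,d\mu_n \;\longrightarrow\; \int_I \varphi(x)\,\nu_\infty(x)\,dx.
\end{equation*}
On the other hand, using the decomposition \eqref{13.5} and the fact that $d\mu_n = \tfrac{1}{n+1}K_n(x,x)\,d\mu(x)$,
\begin{equation*}
\int \varphi\,d\mu_n = \int_I \varphi(x)\,\tfrac{1}{n+1}K_n(x,x)\,w(x)\,dx + \int_I \varphi(x)\,\tfrac{1}{n+1}K_n(x,x)\,d\mu_\s(x).
\end{equation*}
Because $\tfrac{1}{n+1}K_n(x,x)=\tfrac{n}{n+1}\cdot\tfrac{1}{n}K_n(x,x)\to g(x)$ uniformly on $\supp(\varphi)$, I can pass to the limit in each piece separately (uniform convergence against the finite measures $w(x)\,dx$ and $d\mu_\s$ restricted to $\supp(\varphi)$). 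This yields
\begin{equation*}
\int_I \varphi(x)\,g(x)\,w(x)\,dx + \int_I \varphi(x)\,g(x)\,d\mu_\s(x) = \int_I \varphi(x)\,\nu_\infty(x)\,dx.
\end{equation*}

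Since this holds for every $\varphi\in C_c(I)$, the signed measures agree on $I$:
\begin{equation*}
g(x)\,w(x)\,dx + g(x)\,d\mu_\s\!\restriction\! I = \nu_\infty(x)\,dx.
\end{equation*}
The right-hand side is Lebesgue absolutely continuous, while $g(x)\,d\mu_\s\!\restriction\! I$ is Lebesgue singular. By uniqueness of the Lebesgue decomposition, $g(x)\,d\mu_\s\!\restriction\! I = 0$ and $g(x)w(x)=\nu_\infty(x)$ for a.e.\ $x\in I$. Since $w(x)\neq 0$ on $I$ by hypothesis, dividing gives $g(x)=\nu_\infty(x)/w(x)$ as claimed.

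The only step requiring real care is the passage to the limit against the singular part $d\mu_\s$: one must notice that the uniform convergence of $\tfrac{1}{n}K_n(x,x)$ on $\supp(\varphi)$ is strong enough to integrate against \emph{any} finite Borel measure there, not just Lebesgue measure. Once that is in hand, the ``extraction'' of $g$ is just Lebesgue-decomposition bookkeeping; it is notable that the argument simultaneously forces $g\,d\mu_\s=0$ on $I$, a by-product of the proof worth pointing out.
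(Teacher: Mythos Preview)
Your proof is correct and follows the same route as the paper: use Theorem~\ref{T13.1} to transfer the weak limit $d\nu_\infty$ to the sequence $d\mu_n$, then read off the density on $I$ from the uniform convergence of $\tfrac{1}{n}K_n(x,x)$. The paper's proof is a single line (``The theorem implies $d\nu_\infty\restriction I = w(x)g(x)$''), and you have simply filled in the details it leaves to the reader---most notably the explicit use of the Lebesgue decomposition to dispose of the singular piece $g\,d\mu_\s\restriction I$, which the paper silently absorbs into that one line.
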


\begin{proof} The theorem implies $d\nu_\infty\restriction I =w(x)g(x)$.
\end{proof}

Thus, in the regular case, we expect that ``usually"
\begin{equation} \lb{13.8}
\f{1}{n}\, K_n(x,x) \to \f{\rho_\fre(x)}{w(x)}
\end{equation}
This is what we explore in much of the rest of this paper.

%%%%%%%%%%%%%%%%%%%%%%%%%%%%%%%%%%%%%%%%%%%%%%%%%%%%%%%%%%%%%%%%%%%%%%
\section{Variational Principle: M\'at\'e--Nevai Upper Bounds} \lb{s14}
%%%%%%%%%%%%%%%%%%%%%%%%%%%%%%%%%%%%%%%%%%%%%%%%%%%%%%%%%%%%%%%%%%%%%%

The Cotes numbers, $\lambda_n (z_0)$, are given by \eqref{9.6}, so upper bounds on $\lambda_n (z_0)$
mean lower bounds on diagonal CD kernels and there is a confusion of ``upper bounds'' and ``lower bounds.''
We will present here some very general estimates that come from the use of trial functions in \eqref{9.6}
so they are called M\'at\'e--Nevai upper bounds (after \cite{MNT87b}), although we will write them as lower
bounds on $K_n$. One advantage is their great generality.

\begin{definition} Let $d\mu$ be a measure on $\bbR$ of the form
\begin{equation} \lb{14.1}
d\mu = w(x)\, dx + d\mu_\s
\end{equation}
where $d\mu_\s$ is singular with respect to Lebesgue measure. We call $x_0$ a Lebesgue point of $\mu$
if and only if
\begin{gather}
\f{n}{2}\, \mu_\s \biggl(\biggl[ x_0 - \f{1}{n}, x_0 + \f{1}{n}\biggr]\biggr)\to 0 \lb{14.2} \\
\f{n}{2} \int_{x_0-\f{1}{n}}^{x_0 + \f{1}{n}} \abs{w(x)-w_0(x_0)}\, dx \to 0 \lb{14.3}
\end{gather}
\end{definition}

It is a fundamental fact of harmonic analysis (\cite{Rudin}) that for any $\mu$ Lebesgue-a.e., $x_0$ in
$\bbR$ is a Lebesgue point for $\mu$. Here is the most general version of the MN upper bound:

\begin{theorem}\lb{T14.1} Let $\fre\subset\bbR$ be an arbitrary compact set which is regular for the Dirichlet
problem. Let $I\subset\fre$ be a closed interval. Let $d\mu$ be a measure with compact support in
$\bbR$ with $\sigma_\ess (d\mu)\subset\fre$. Then for any Lebesgue point, $x$ in $I$,
\begin{equation} \lb{14.4}
\liminf_{n\to\infty} \f{1}{n}\, K_n(x,x) \geq \f{\rho_\fre(x)}{w(x)}
\end{equation}
where $d\rho_\fre\restriction I=\rho_\fre(x)\, dx$. If $w$ is continuous on $I$ {\rm{(}}including at
the endpoints as a function in a neighborhood of $I${\rm{)}} and nonvanishing, then \eqref{14.4} holds
uniformly on $I$. If $x_n\to x\in I$ and $A=\sup_n n\abs{x_n-x} <\infty$ and $x$ is a Lebesgue, then
\eqref{14.4} holds with $K_n(x,x)$ replaced by $K_n(x_n,x_n)$. If $w$ is continuous and nonvanishing on $I$,
then this extended convergence is uniform in $x\in I$ and $x_n$'s with $A\leq A_0 <\infty$.
\end{theorem}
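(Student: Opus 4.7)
By the Christoffel variational principle (Theorem~9.2), $K_n(x,x)^{-1}$ is the infimum of $\int |Q|^2\, d\mu$ over polynomials $Q$ with $\deg Q \leq n$ and $Q(x) = 1$. So the task is to construct trial polynomials $Q_n$ with $Q_n(x) = 1$ and
\[
\limsup_{n \to \infty} n \int |Q_n|^2\, d\mu \;\leq\; \frac{w(x)}{\rho_\fre(x)},
\]
which by the variational principle yields \eqref{14.4}.

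The construction rests on a comparison measure. Fix $\mu_0$ supported on $\fre$ with an absolutely continuous weight $w_0$ that is continuous and strictly positive near $x$, for which one knows independently that $\frac{1}{n} K_n(y,y;\mu_0) \to \rho_\fre(y)/w_0(y)$ pointwise and for which Nevai $\delta$-convergence (Theorem~11.6) holds at $x$. Natural choices are a scaled Chebyshev-type weight when $\fre$ is a single interval, whose CD kernel can be computed essentially explicitly (cf.\ Example~9.5), or the equilibrium measure $d\mu_0 = \rho_\fre(y)\, dy$ for general finite-gap regular $\fre$; the general case reduces to the interval case by Totik's polynomial inverse image method. Pick $\eta > 0$ small, set $k = \lfloor \eta n \rfloor$, $m = n - k$, and define
\[
R_m(y) = \frac{K_m(y, x; \mu_0)}{K_m(x, x; \mu_0)}, \qquad Q_n(y) = T_k(y)\, R_m(y),
\]
where $T_k$ is a fast decreasing polynomial of degree $\leq k$ with $T_k(x) = 1$, $|T_k(y)| \leq 1 + o(1)$ on a fixed window $|y - x| \leq \delta$, and $|T_k(y)| \leq e^{-c \eta n}$ on $\{y \in \fre : |y - x| \geq \delta\}$. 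Such $T_k$ exist precisely because $\fre$ is regular for the Dirichlet problem.

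Now estimate $\int |Q_n|^2\, d\mu$ by splitting the line into the window $|y - x| \leq \delta$ and its complement. Inside the window, bound $|T_k| \leq 1 + o(1)$ and use Nevai $\delta$-convergence for $\mu_0$: the measure $|R_m|^2\, d\mu_0$ is concentrated at $x$ and has total mass $1/K_m(x,x;\mu_0) \sim w_0(x)/(n\rho_\fre(x))$. The Lebesgue-point hypothesis~\eqref{14.3} lets one replace the absolutely continuous weight $w$ of $d\mu$ by $w(x)$ on average against the concentrating kernel $|R_m|^2 \, dy$ (using also $w_0$ continuous, so $d\mu_0 \approx w_0(x)\, dy$ on the window), giving a contribution asymptotic to $w(x)/(w_0(x) K_m(x,x;\mu_0)) \sim w(x)/(n\rho_\fre(x))$. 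The singular part is handled by~\eqref{14.2} together with the pointwise bound $|R_m(y)|^2 \leq K_m(y,y;\mu_0)/K_m(x,x;\mu_0) = O(1)$ from the Schwarz inequality \eqref{1.13a}; this forces one to localize $T_k$ further on an intermediate scale (say $\delta_n \to 0$) so that the singular mass on the effective support of $|Q_n|^2$ is $o(1/n)$. Outside the window, exponential decay of $T_k$ overwhelms the sub-exponential $e^{o(n)}$ growth of $|K_m(y,x;\mu_0)|$ supplied by Theorem~12.1 applied to the regular $\mu_0$, producing an $O(e^{-c\eta n})$ tail. Sending $\eta \downarrow 0$ after $n \to \infty$ yields~\eqref{14.4}.

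The uniform statement on compacta of $I$ when $w$ is continuous and positive follows because the Lebesgue-differentiation estimates become uniform and the construction depends continuously on $x$. The off-diagonal case $n|x_n - x| \leq A$ is handled by centering $T_k$ and $R_m$ at $x_n$ rather than $x$: the shift is $O(1/n)$, much smaller than the scale $1/(\eta n)$ on which $T_k$ and the reproducing-kernel quotient vary, so both polynomials pick up only $O(\eta)$ errors, absorbable at the end. \textbf{The main obstacle} is the input asymptotic $\frac{1}{n} K_n(x,x;\mu_0) \to \rho_\fre(x)/w_0(x)$ for the comparison measure --- this is in essence our theorem in the special case of a nice $\mu_0$, and while it is classical for Chebyshev-type measures on a single interval, its extension to general regular finite-gap $\fre$ requires Totik's polynomial inverse image construction or an equivalent device; the delicate singular-part estimate, requiring a sharper intermediate-scale localization of $T_k$, is the second technical hurdle.
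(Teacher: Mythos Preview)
Your Nevai trial polynomial approach---localizer $T_k$ times the normalized CD kernel $R_m$ of a model measure $\mu_0$---is exactly what the paper outlines (it does not prove Theorem~14.1 in detail but refers to Sections~16, 18, 19 and the M\'at\'e--Nevai and Totik papers for the single-interval and general-$\fre$ cases respectively). Two small points worth tightening: (i) since the hypothesis is only $\sigma_\ess(d\mu)\subset\fre$, there may be finitely many eigenvalues of $\mu$ outside any fixed tube around $\fre$, where $R_m$ grows genuinely exponentially and your localizer's decay need not dominate; as the paper notes at the end of Section~16, one inserts explicit zeros at these finitely many points into the trial polynomial, costing only $O(1)$ degrees. (ii) The existence of the localizer $T_k$ does not hinge on Dirichlet regularity of $\fre$---the elementary Nevai polynomial $(1-(y-x)^2/A^2)^{\lfloor\eta n\rfloor}$ with $A\geq\diam(\supp\mu)$ already decays exponentially on all of $\supp\mu$ away from $x$; Dirichlet regularity enters only to invoke Theorem~12.1 and get the sub-exponential bound on $p_n(\cdot;\mu_0)$, hence on $R_m$, uniformly on a neighborhood of $\fre$.
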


\begin{remarks} 1. If $I\subset\fre$ is a nontrivial interval, the measure $d\rho_\fre\restriction I$ is
purely absolutely continuous (see, e.g., \cite{EqMC,Rice}).

\smallskip
2. For OPUC, this is a result of M\'at\'e--Nevai \cite{MN}. The translation to OPRL on $[-1,1]$ is
explicit in M\'at\'e--Nevai--Totik \cite{MNT91}. The extension to general sets via polynomial mapping
and approximation (see Section~\ref{s18}) is due to Totik \cite{Tot}. These papers also require a local
Szeg\H{o} condition, but that is only needed for lower bounds on $\lambda_n$ (see Section~\ref{s17}).
They also don't state the $x_n\to x_\infty$ result, which is a refinement introduced by Lubinsky
\cite{Lub} who implemented it in certain $[-1,1]$ cases.

\smallskip
3. An alternate approach for Totik's polynomial mapping is to use trial functions based on Jost--Floquet
solutions for periodic problems; see Section~\ref{s19} (and also \cite{2exts,Rice}).
\end{remarks}

One can combine \eqref{14.4} with weak convergence and regularity to get

\begin{theorem}[Simon \cite{Weak-cd}]\lb{T14.2} Let $\fre\subset\bbR$ be an arbitrary compact set, regular
for the Dirichlet problem. Let $d\mu$ be a measure with compact support in $\bbR$ with $\sigma_\ess (d\mu)
= \fre$ and with $d\mu$ regular for $\fre$. Let $I\subset\fre$ be an interval so $w(x) >0$ a.e.\ on $I$. Then
\begin{alignat}{2}
&\text{\rm{(i)}} \qquad && \int_I \biggl| \f{1}{n}\, K_n(x,x)\, w(x)-\rho_\fre(x)\biggr| dx \to 0 \lb{14.5} \\
&\text{\rm{(ii)}} \qquad && \int_I \f{1}{n}\, K_n(x,x)\, d\mu_\s (x) \to 0 \lb{14.6}
\end{alignat}
\end{theorem}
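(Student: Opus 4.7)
The plan is to combine the pointwise M\'at\'e--Nevai lower bound (Theorem~\ref{T14.1}) with the weak convergence of the probability measures $d\mu_n$ of~\eqref{13.1} to the equilibrium measure $d\rho_\fre$, and then run a standard Fatou-squeeze argument to upgrade from integral convergence to the $L^1$ statement in~(i). Part~(ii) will then fall out as a by-product.

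First I would establish weak convergence $d\mu_n\to d\rho_\fre$. Since $\mu$ is regular for $\fre$, Theorem~\ref{T12.2} gives $d\nu_n\to d\rho_\fre$ weakly. Theorem~\ref{T13.1} asserts that $d\mu_n$ and $d\nu_{n+1}$ have the same weak limit points, and both sequences are supported in a fixed compact set (hence are tight), so every subsequential limit of $d\mu_n$ is $d\rho_\fre$, giving $d\mu_n\to d\rho_\fre$. Because $I\subset\fre$ is an interval and the equilibrium measure is purely absolutely continuous on any subinterval of its support, $\rho_\fre(\partial I)=0$, and the Portmanteau theorem yields
\[
\int_I \f{1}{n+1}\,K_n(x,x)\,d\mu(x) \;\longrightarrow\; \int_I \rho_\fre(x)\,dx.
\]

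Next, set $f_n(x):=\f{1}{n}K_n(x,x)\,w(x)$ on $I$. At every Lebesgue point $x\in I$ (i.e., a.e.\ $x\in I$), Theorem~\ref{T14.1} gives $\liminf_n \f{1}{n}K_n(x,x)\geq \rho_\fre(x)/w(x)$, and the hypothesis $w>0$ a.e.\ on $I$ makes the division legitimate a.e., so $\liminf_n f_n(x)\geq \rho_\fre(x)$ a.e.\ on $I$. Splitting $d\mu$ according to~\eqref{14.1} and absorbing the harmless factor $n/(n+1)$, the preceding display becomes
\[
\int_I f_n(x)\,dx \;+\; \int_I \f{1}{n}K_n(x,x)\,d\mu_\s(x) \;\longrightarrow\; \int_I \rho_\fre(x)\,dx.
\]
Both summands on the left are nonnegative; Fatou's lemma applied to $f_n$ together with the pointwise liminf bound gives $\liminf_n \int_I f_n\,dx \geq \int_I \rho_\fre\,dx$. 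Comparing with the convergence of the sum forces the singular summand to tend to $0$---this is~(ii)---and also $\int_I f_n\,dx\to\int_I \rho_\fre\,dx$.

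Finally, to upgrade to~(i), set $g_n:=(\rho_\fre-f_n)^+$. Then $0\leq g_n\leq \rho_\fre\in L^1(I)$, and $g_n\to 0$ a.e.\ by the liminf bound, so dominated convergence gives $\int_I g_n\,dx\to 0$. Combined with $\int_I (f_n-\rho_\fre)\,dx\to 0$ from the previous step, this yields $\int_I (f_n-\rho_\fre)^+\,dx\to 0$, and hence $\int_I |f_n-\rho_\fre|\,dx\to 0$, which is~(i). The main point requiring care is the Portmanteau step: one must know $\rho_\fre$ has no atom at $\partial I$, which is classical from potential theory since $\rho_\fre$ is absolutely continuous on any interval contained in $\fre$ (see~\cite{StT,EqMC}); otherwise the argument is a textbook combination of Fatou with a matching integral upper bound.
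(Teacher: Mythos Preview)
Your argument is correct and follows essentially the same route as the paper: weak convergence $\frac{1}{n}K_n\,d\mu\to d\rho_\fre$ (via Theorems~\ref{T12.2} and~\ref{T13.1}) combined with Fatou and the M\'at\'e--Nevai lower bound \eqref{14.4} to kill the singular part and force the a.c.\ part to match $\rho_\fre$. The only cosmetic difference is that the paper phrases the squeeze via subsequential weak limits of $\frac{1}{n}K_n\,d\mu_\s$, whereas you go straight through Portmanteau on $I$ and spell out the $L^1$ upgrade $(\rho_\fre-f_n)^+\to 0$ that the paper dismisses as ``a simple argument \cite{Weak-cd}''; both are the same idea.
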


\begin{proof} By Theorems~\ref{T12.2} and \ref{T13.1},
\begin{equation} \lb{14.7}
\f{1}{n}\, K_n(x,x)\, d\mu \to d\rho_\fre
\end{equation}
Let $\nu_1$ be a limit point of $\f{1}{n} K_n(x,x)\, d\mu_\s$ and
\begin{equation} \lb{14.8}
d\nu_2 = d\rho_\fre - d\nu_1
\end{equation}

If $f\geq 0$, by Fatou's lemma and \eqref{14.4},
\begin{equation} \lb{14.9}
\int_I f\, d\nu_2 \geq \int_I \rho_\fre(x) f(x)\, dx
\end{equation}
that is, $d\nu_2 \restriction I \geq \rho_\fre(x)\, dx \restriction I$. By \eqref{14.8}, $d\nu_2
\restriction I\leq \rho_\fre(x)\, dx$. It follows $d\nu_1 \restriction I$ is $0$ and $d\nu_2
\restriction I = d\rho_\fre \restriction I$.

By compactness, $\f{1}{n} K_n(x,x)\, d\mu_\s \restriction I\to 0$ weakly, implying \eqref{14.6}. By a
simple argument \cite{Weak-cd}, weak convergence of $\f{1}{n} K_n(x,x) w(x)\, dx \to \rho_\fre(x)\, dx$
and \eqref{14.4} imply \eqref{14.5}.
\end{proof}

%%%%%%%%%%%%%%%%%%%%%%%%%%%%%%%%%%%%%%%%%%%%%%%%%%%%%%%%%%%%%%%
\section{Criteria for A.C.\ Spectrum} \lb{s15}
%%%%%%%%%%%%%%%%%%%%%%%%%%%%%%%%%%%%%%%%%%%%%%%%%%%%%%%%%%%%%%%

Define
\begin{equation} \lb{15.1}
N=\biggl\{x\in\bbR\biggm| \liminf \, \f{1}{n}\, K_n(x,x)<\infty\biggr\}
\end{equation}
so that
\begin{equation} \lb{15.2}
\bbR\setminus N =\biggl\{x\in\bbR\biggm| \lim \,\f{1}{n}\, K_n(x,x)=\infty\biggr\}
\end{equation}

Theorem~\ref{T14.1} implies

\begin{theorem}\lb{T15.1} Let $\fre\subset\bbR$ be an arbitrary compact set and $d\mu=w(x)\, dx +
d\mu_\s$ a measure with $\sigma(\mu)=\fre$. Let $\Sigma_\ac=\{x\mid w(x) >0\}$. Then $N\setminus
\Sigma_\ac$ has Lebesgue measure zero.
\end{theorem}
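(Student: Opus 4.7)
The strategy is to deduce the theorem from Theorem~\ref{T14.1} by enlarging the arbitrary compact set $\fre$ to one that is regular for the Dirichlet problem. Any closed interval $[a,b]$ is automatically regular for the Dirichlet problem, and any closed interval containing $\fre$ satisfies $\sigma_\ess(\mu) \subseteq \sigma(\mu) = \fre \subseteq [a,b]$, so the hypotheses of Theorem~\ref{T14.1} are satisfied with $\fre$ replaced by $[a,b]$. Crucially, the equilibrium measure of $[a,b]$ is absolutely continuous with density
\[
\rho_{[a,b]}(x) = \frac{1}{\pi\sqrt{(x-a)(b-x)}},
\]
which is strictly positive on any closed subinterval $I \subset (a,b)$.

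First, I would fix an arbitrary $x_0 \in \bbR$, choose $a<b$ with $\fre \cup \{x_0\} \subset (a,b)$, and let $I \subset (a,b)$ be a closed interval containing $x_0$ in its interior. Applying Theorem~\ref{T14.1} with $\fre$ replaced by $[a,b]$ gives that at every Lebesgue point $x \in I$ of $\mu$,
\[
\liminf_{n \to \infty} \frac{1}{n}\, K_n(x,x) \geq \frac{\rho_{[a,b]}(x)}{w(x)}.
\]
At such a Lebesgue point with $w(x) = 0$, the right-hand side is $+\infty$, and therefore $x \notin N$.

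Since Lebesgue-a.e.\ point of $\bbR$ is a Lebesgue point of $\mu$ (the standard harmonic-analysis fact invoked in Section~\ref{s14}), this shows that $(N \setminus \Sigma_\ac) \cap I$ has Lebesgue measure zero. Exhausting $\bbR$ by a countable union of such intervals $I$---by letting $x_0$ range over a countable dense set and enlarging $[a,b]$ correspondingly---yields the global conclusion. I do not anticipate any real obstacle: the only thing that might appear to stand in the way is the regularity hypothesis in Theorem~\ref{T14.1}, but the embedding trick dispatches it cleanly, since Theorem~\ref{T14.1} only demands $\sigma_\ess(\mu) \subseteq \fre$ and leaves us free to inflate $\fre$ to any convenient larger regular set, at the cost only of replacing the specific density $\rho_\fre(x)$ by $\rho_{[a,b]}(x)$---a change that is harmless since we only need positivity of the density, not any precise value.
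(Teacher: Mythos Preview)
Your proposal is correct and follows essentially the same route as the paper: at a Lebesgue point $x_0$ with $w(x_0)=0$, invoke Theorem~\ref{T14.1} to force $\liminf \frac{1}{n}K_n(x_0,x_0)=\infty$, hence $x_0\notin N$. The paper's proof is terser and simply cites Theorem~\ref{T14.1} without commenting on the regularity hypothesis; your embedding of $\fre$ into a closed interval $[a,b]$ (automatically regular for the Dirichlet problem, with strictly positive equilibrium density on its interior) is exactly the right way to make that citation honest, and is the natural reading of what the paper intends.
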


\begin{proof} If $x_0\in\bbR\setminus\Sigma_\ac$ and is a Lebesgue point of $\mu$, then $w(x_0)=0$ and,
by Theorem~\ref{T14.1}, $x_0\in\bbR\setminus N$. Thus,
\[
(\bbR\setminus\Sigma_\ac)\setminus (\bbR\setminus N) = N\setminus\Sigma_\ac
\]
has Lebesgue measure zero.
\end{proof}

\begin{remark}  This is a direct but not explicit consequence of the M\'at\'e--Nevai ideas
\cite{MN}. Without knowing of this work, Theorem~\ref{T15.1} was rediscovered with a very
different proof by Last--Simon \cite{LastS}.
\end{remark}

On the other hand, following Last--Simon \cite{LastS}, we note that Fatou's lemma and
\begin{equation} \lb{15.3}
\int \f{1}{n}\, K_n(x,x)\, d\mu(x) =1
\end{equation}
implies
\begin{equation} \lb{15.4}
\int \liminf\, \f{1}{n}\, K_n(x,x)\, d\mu(x) \leq 1
\end{equation}
so

\begin{theorem}[\cite{LastS}]\lb{T15.2} $\Sigma_\ac\setminus N$ has Lebesgue measure zero.
\end{theorem}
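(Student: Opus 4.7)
The plan is to use exactly the hint already in place: combine the bound
\eqref{15.4} (which comes from Fatou applied to $\int \tfrac{1}{n}K_n(x,x)\,d\mu = 1$)
with the fact that on $\Sigma_\ac$ the measure $\mu$ is equivalent to
Lebesgue measure, so $\mu$-a.e.\ statements transfer to Lebesgue-a.e.\
statements.

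First, I would invoke \eqref{15.4}: since $\int \liminf \tfrac{1}{n}K_n(x,x)\,d\mu(x) \le 1 < \infty$, the nonnegative
integrand $\liminf \tfrac{1}{n}K_n(x,x)$ must be finite for
$\mu$-a.e.\ $x$. In other words,
\[
\mu\bigl(\{x : \liminf\, \tfrac{1}{n}K_n(x,x) = \infty\}\bigr) = 0,
\]
which is the statement $\mu(\bbR \setminus N) = 0$.

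Second, I would restrict attention to $\Sigma_\ac$. By definition
$d\mu \restriction \Sigma_\ac = w(x)\,dx \restriction \Sigma_\ac$ with
$w(x) > 0$, so on $\Sigma_\ac$ the measures $\mu$ and Lebesgue are mutually
absolutely continuous. In particular, if $A \subset \Sigma_\ac$ has
$\mu(A) = 0$, then $A$ has Lebesgue measure zero. Applying this with
$A = \Sigma_\ac \setminus N = \Sigma_\ac \cap (\bbR \setminus N)$, which
satisfies $\mu(A) \le \mu(\bbR \setminus N) = 0$, we conclude that
$\Sigma_\ac \setminus N$ has Lebesgue measure zero.

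There is no real obstacle here; the only thing to be careful about is the
direction of the equivalence argument, namely that passing from ``$\mu$-null''
to ``Lebesgue-null'' requires the restriction to $\Sigma_\ac$ (it would fail
on the singular part, where $\mu$ and Lebesgue are mutually singular and the
singular support can have full $\mu$ mass yet zero Lebesgue measure). The
restriction to $\Sigma_\ac$ is precisely what the statement of the theorem
provides.
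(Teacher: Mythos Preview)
Your proposal is correct and follows exactly the paper's approach: the paper's entire argument is the Fatou inequality \eqref{15.4} followed by the word ``so,'' and you have correctly supplied the two missing sentences (finite integral forces $\mu(\bbR\setminus N)=0$, and on $\Sigma_\ac$ a $\mu$-null set is Lebesgue-null because $w>0$ there). One tiny imprecision: it is not quite true that $d\mu\restriction\Sigma_\ac = w(x)\,dx\restriction\Sigma_\ac$, since $d\mu_\s$ could place mass on $\Sigma_\ac$; what you need and use is only the inequality $d\mu\geq w(x)\,dx$, which gives the direction Lebesgue $\ll\mu$ on $\Sigma_\ac$ that your last paragraph correctly isolates.
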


Thus, up to sets of measure zero, $\Sigma_\ac =N$. What is interesting is that this holds,
for example, when $\fre$ is a positive measure Cantor set as occurs for the almost Mathieu
operator $(a_n\equiv 1$, $b_n =\lambda \cos(\pi\alpha n + \theta)$, $\abs{\lambda} <2$,
$\lambda\neq 0$, $\alpha$ irrational). This operator has been heavily studied; see
Last \cite{Last05}.

%%%%%%%%%%%%%%%%%%%%%%%%%%%%%%%%%%%%%%%%%%%%%%%%%%%%%%%%%%%%%%%
\section{Variational Principle: Nevai Trial Polynomial} \lb{s16}
%%%%%%%%%%%%%%%%%%%%%%%%%%%%%%%%%%%%%%%%%%%%%%%%%%%%%%%%%%%%%%%

A basic idea is that if $d\mu_1$ and $d\mu_2$ look alike near $x_0$, there is a good chance that
$K_n(x_0,x_0;d\mu_1)$ and $K_n(x_0,x_0;d\mu_2)$ are similar for $n$ large. The expectation \eqref{13.8}
says they better have the same support (and be regular for that support), but this is a reasonable
guess.

It is natural to try trial polynomials minimizing $\lambda_n (x_0,d\mu_1)$ in the Christoffel
variational principle for $\lambda_n (x_0,d\mu_2)$, but Example~11.3 shows this will not work
in general. If $d\mu_1$ has a strong zero near some other $x_1$, the trial polynomial for $d\mu_1$
may be large near $x_1$ and be problematical for $d\mu_2$ if it does not have a zero there. Nevai
\cite{Nev79} had the idea of using a localizing factor to overcome this.

Suppose $\fre\subset\bbR$, a compact set which, for now, we suppose contains $\sigma(d\mu_1)$ and
$\sigma(d\mu_2)$. Pick $A=\diam(\fre)$ and consider (with $[\dott]\equiv$ integral part)
\begin{equation} \lb{16.1}
\biggl( 1-\f{(x-x_0)^2}{A^2}\biggr)^{[\veps n]} \equiv N_{2[\veps n]}(x)
\end{equation}
Then for any $\delta$,
\begin{equation} \lb{16.2}
\sup_{\substack{ \abs{x-x_0} > \delta \\ x\in\fre}} N_{2[\veps n]}(x) \leq e^{-c(\delta,\veps)n}
\end{equation}
so if $Q_{n-2[\veps n]}(x)$ is the minimizer for $\mu_1$ and $\fre$ is regular for the Dirichlet
problem and $\mu_1$ is regular for $\fre$, then the Nevai trial function
\[
N_{2[\veps n]}(x) Q_{n-2[\veps n]}(x)
\]
will be exponentially small away from $x_0$.

For this to work to compare $\lambda_n (x_0,d\mu_1)$ and $\lambda (x_0,d\mu_2)$, we need two
additional properties of $\lambda_n (x_0,d\mu_1)$:
\begin{SL}
\item[(a)] $\lambda_n (x_0,d\mu_1)\geq C_\veps e^{-\veps n}$ for each $\veps <0$. This is needed for the
exponential contributions away from $x_0$ not to matter.
\item[(b)]
\[
\lim_{\veps\downarrow 0}\, \limsup_{n\to\infty}\, \f{\lambda_n(x_0,d\mu_1)}
{\lambda_{n-2[\veps n]}(x,d\mu_1)}=1
\]
so that the change from $Q_n$ to $Q_{n-2[\veps n]}$ does not matter.
\end{SL}

\smallskip
Notice that both (a) and (b) hold if
\begin{equation} \lb{16.3}
\lim_{n\to\infty}\, n\lambda_n(x_0,d\mu)=c>0
\end{equation}

If one only has $\fre=\sigma_\ess (d\mu_2)$, one can use explicit zeros in the trial polynomials to mask
the eigenvalues outside $\fre$.

For details of using Nevai trial functions, see \cite{2exts,Rice}. Below we will just refer to using
Nevai trial functions.

%%%%%%%%%%%%%%%%%%%%%%%%%%%%%%%%%%%%%%%%%%%%%%%%%%%%%%%%%%%%%%%%%%%%%%%%%%%%
\section{Variational Principle: M\'at\'e--Nevai--Totik Lower Bound} \lb{s17}
%%%%%%%%%%%%%%%%%%%%%%%%%%%%%%%%%%%%%%%%%%%%%%%%%%%%%%%%%%%%%%%%%%%%%%%%%%%%

In \cite{MNT91}, M\'at\'e--Nevai--Totik proved:

\begin{theorem} \lb{T17.1} Let $d\mu$ be a measure on $\partial\bbD$
\begin{equation} \lb{17.1}
d\mu = \f{w(\theta)}{2\pi}\, d\theta + d\mu_\s
\end{equation}
which obeys the Szeg\H{o} condition
\begin{equation} \lb{17.2}
\int \log(w(\theta))\, \f{d\theta}{2\pi} > -\infty
\end{equation}
Then for a.e.\ $\theta_\infty\in\partial\bbD$,
\begin{equation} \lb{17.3}
\liminf\, n\lambda_n (\theta_\infty) \geq w(\theta_\infty)
\end{equation}
This remains true if $\lambda_n (\theta_\infty)$ is replaced by $\lambda_n (\theta_n)$
with $\theta_n\to\theta_\infty$ obeying $\sup n\abs{\theta_n-\theta_\infty}<\infty$.
\end{theorem}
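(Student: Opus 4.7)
The plan is to combine the monotonicity of Christoffel functions with Szeg\H{o}'s theorem on the boundary asymptotics of OPUC.

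First, I would reduce to the purely absolutely continuous case. Since $d\mu \geq w(\theta)\,d\theta/(2\pi)$ as measures, Theorem~\ref{T9.3} gives $K_n(z_0,z_0;d\mu) \leq K_n(z_0,z_0; w\,d\theta/(2\pi))$, hence
\[
\lambda_n(\theta_0;d\mu) \geq \lambda_n(\theta_0; w\,d\theta/(2\pi)).
\]
So it suffices to prove the bound when the measure is purely absolutely continuous, $d\mu_{\rm ac} = w\,d\theta/(2\pi)$.

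Second, under the Szeg\H{o} condition, I would introduce the Szeg\H{o} outer function $D \in H^2(\bbD)$ with $|D(e^{i\theta})|^2 = w(\theta)$ a.e. Szeg\H{o}'s theorem then provides $H^2(\bbD)$-convergence of the reversed polynomials $\varphi_n^*(z)$ to a multiple of $1/D(z)$ in the interior of the disk. Using the identity $|\varphi_j(e^{i\theta})| = |\varphi_j^*(e^{i\theta})|$ on the unit circle (which follows from $\varphi_j^*(e^{i\theta}) = e^{ij\theta}\overline{\varphi_j(e^{i\theta})}$), one rewrites
\[
K_n(e^{i\theta_0}, e^{i\theta_0}) = \sum_{j=0}^n |\varphi_j^*(e^{i\theta_0})|^2.
\]
The plan is then to pass from the $H^2(\bbD)$ convergence to an a.e.\ pointwise Ces\`aro-averaged bound $\frac{1}{n+1}\sum_{j=0}^n |\varphi_j^*(e^{i\theta})|^2 \leq \frac{1}{w(\theta)} + o(1)$ a.e., which immediately gives $\liminf_n n\lambda_n(\theta_0) \geq w(\theta_0)$ a.e. The normalization identity $\int K_n(e^{i\theta}, e^{i\theta}) w(\theta)\, d\theta/(2\pi) = n+1$ (which follows from the orthonormality of $\{\varphi_j\}$) is consistent with this bound and provides the global ``average'' version.

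For the extension to $\theta_n \to \theta_\infty$ with $\sup_n n|\theta_n - \theta_\infty| < \infty$, I would use the off-diagonal CD formula \eqref{3.18} together with Bernstein--Markov type bounds on derivatives of $K_n$ to transfer the diagonal bound from $\theta_\infty$ to the nearby points $\theta_n$; the condition $n|\theta_n - \theta_\infty| \leq A$ is exactly what ensures the correction terms are $O(1)$ rather than growing.

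The main obstacle is the transfer step: going from the $H^2(\bbD)$-convergence statement about $\varphi_n^*$ to a pointwise (or Ces\`aro-averaged) a.e.\ bound on $|\varphi_n^*(e^{i\theta})|^2$ on the boundary. This passage requires maximal function estimates on $\partial\bbD$ that use the Szeg\H{o} condition in an essential way, and is the analytic heart of the argument. The Lebesgue point hypothesis at $\theta_\infty$ enters by ensuring that the weight $w$ can be treated as its value $w(\theta_\infty)$ in appropriate local averages against the Fej\'er-type kernels arising in the Ces\`aro sums.
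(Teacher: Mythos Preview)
The paper does not actually prove this theorem; it attributes the result to \cite{MNT91} and remarks that the proof there is ``clever but involved.'' Your reduction to the purely a.c.\ case via Theorem~\ref{T9.3} is correct, as is the identity $K_n(e^{i\theta},e^{i\theta})=\sum_{j=0}^n|\varphi_j^*(e^{i\theta})|^2$ on $\partial\bbD$.

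The genuine gap is precisely where you flag it, and it is not a technicality to be patched: it is essentially the whole theorem. Szeg\H{o}'s theorem gives $\varphi_n^*\to D^{-1}$ in $H^2(\bbD)$, i.e., in $L^2(\partial\bbD,d\theta)$, but $L^2$ convergence on the boundary does not yield the a.e.\ Ces\`aro bound $\frac{1}{n+1}\sum_{j\le n}|\varphi_j^*(e^{i\theta})|^2\le w(\theta)^{-1}+o(1)$ that you need. Under only the Szeg\H{o} condition, a.e.\ pointwise boundary convergence of $|\varphi_n^*|^2$ to $1/w$ is not known, so one cannot simply Ces\`aro-average a limit that may not exist pointwise. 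The actual M\'at\'e--Nevai--Totik argument does not proceed by summing $|\varphi_j^*|^2$; it works with the variational characterization of $\lambda_n$ directly and brings in the harmonic conjugate of $\log w$ in an essential way---note Remark~3 after the theorem, which says the good set consists of Lebesgue points of $\log w$ \emph{and of its conjugate function}. Your outline never touches the conjugate function, which signals that the route via $|\varphi_j^*|^2$ and $H^2$-convergence alone is missing a key ingredient. The wiggle extension to $\theta_n$ with $\sup_n n|\theta_n-\theta_\infty|<\infty$ is a separate later contribution of Findley~\cite{Find}, and also requires more than a one-line appeal to Bernstein--Markov.
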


\begin{remarks} 1. The proof in \cite{MNT91} is clever but involved (\cite{Rice} has an
exposition); it would be good to find a simpler proof.

\smallskip
2. \cite{MNT91} only has the result $\theta_n =\theta_\infty$. The general $\theta_n$ result is
due to Findley \cite{Find}.

\smallskip
3. The $\theta_\infty$ for which this is proven have to be Lebesgue points for $d\mu$ as well as
Lebesgue points for $\log(w)$ and for its conjugate function.

\smallskip
4. As usual, if $I$ is an interval with $w$ continuous and nonvanishing, and $\mu_\s(I)=0$,
\eqref{17.3} holds uniformly if $\theta_\infty\in I$.
\end{remarks}

By combining this lower bound with the M\'at\'e--Nevai upper bound, we get the result of
M\'at\'e--Nevai--Totik \cite{MNT91}:

\begin{theorem}\lb{T17.2} Under the hypothesis of Theorem~\ref{T17.1}, for a.e.\ $\theta_\infty\in
\partial\bbD$,
\begin{equation} \lb{17.4}
\lim_{n\to\infty}\, n\lambda_n(\theta_\infty) = w(\theta_\infty)
\end{equation}
This remains true if $\lambda_n (\theta_\infty)$ is replaced by $\lambda_n (\theta_n)$ with
$\theta_n \to \theta_\infty$ obeying $\sup n\abs{\theta_n-\theta_\infty}<\infty$. If $I$ is an
interval with $w$ continuous on $I$ and $\mu_\s(I)=0$, then these results hold uniformly in $I$.
\end{theorem}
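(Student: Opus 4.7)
The plan is to synthesize the two bounds that have already been set up in Sections~\ref{s14} and \ref{s17}: the M\'at\'e--Nevai upper bound on $\lambda_n$ (equivalently, lower bound on $K_n$) from Theorem~\ref{T14.1}, and the matching lower bound on $\lambda_n$ from Theorem~\ref{T17.1}. Once both are in hand, the theorem is a squeeze.

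First I would specialize Theorem~\ref{T14.1} to the OPUC setting with $\fre=\partial\bbD$. This $\fre$ is regular for the Dirichlet problem and its equilibrium measure is $d\rho_\fre=d\theta/(2\pi)$, so with the convention \eqref{17.1} the weight relative to $d\rho_\fre$ is precisely $w(\theta)$. Theorem~\ref{T14.1} then gives, at any Lebesgue point $\theta_\infty$ of $\mu$,
\begin{equation*}
\liminf_{n\to\infty}\,\f{1}{n}\,K_n(e^{i\theta_\infty},e^{i\theta_\infty};d\mu)\geq \f{1}{w(\theta_\infty)},
\end{equation*}
which, by the Christoffel variational identity $\lambda_n(\theta)=K_{n-1}(e^{i\theta},e^{i\theta})^{-1}$ (the shift in index being absorbed in the limit), is equivalent to
\begin{equation*}
\limsup_{n\to\infty}\,n\lambda_n(\theta_\infty)\leq w(\theta_\infty).
\end{equation*}

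Next, Theorem~\ref{T17.1} supplies $\liminf_{n\to\infty}\,n\lambda_n(\theta_\infty)\geq w(\theta_\infty)$ at a.e.\ $\theta_\infty$, using the Szeg\H{o} condition \eqref{17.2}. Combining the two at any $\theta_\infty$ which is simultaneously a Lebesgue point for $\mu$ and a Lebesgue point for $w$, $\log w$, and its conjugate function (the exceptional set is a Lebesgue null set) gives \eqref{17.4}. For the extension to sequences $\theta_n\to\theta_\infty$ with $\sup_n n\abs{\theta_n-\theta_\infty}<\infty$, both Theorem~\ref{T14.1} and Theorem~\ref{T17.1} already state the perturbed form, and the same squeeze applies with $\theta_n$ in place of $\theta_\infty$ on the left sides. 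For uniformity on an interval $I$ with $w$ continuous and nonvanishing and $\mu_\s(I)=0$: Theorem~\ref{T14.1} delivers uniformity of the upper bound under these hypotheses, and Remark~4 after Theorem~\ref{T17.1} states uniformity of the lower bound; together they give uniform convergence on $I$.

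Since both inputs are invoked as black boxes, there is no serious obstacle remaining for this particular statement, and indeed the whole point of Theorem~\ref{T17.2} is to record the synthesis. The only bookkeeping that demands care is matching the normalizing $1/(2\pi)$ between $d\theta$ and the equilibrium measure $d\rho_{\partial\bbD}$ so that the density ratio $\rho_\fre(\theta)/w(\theta)$ in \eqref{14.4} reduces to $1/w(\theta_\infty)$ under the convention \eqref{17.1}, and tracking which exceptional null set (Lebesgue points of $\mu$ for the upper bound; Lebesgue points of $\log w$ and its conjugate for the lower bound) controls the almost-everywhere qualifier. The real analytic content lives in Theorems~\ref{T14.1} and \ref{T17.1}; the present theorem is their clean conjunction.
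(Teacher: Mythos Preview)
Your proposal is correct and matches the paper's approach exactly: the paper's entire argument for Theorem~\ref{T17.2} is the single sentence ``By combining this lower bound with the M\'at\'e--Nevai upper bound, we get the result,'' and you have written out that squeeze in detail. One small citation quibble: Theorem~\ref{T14.1} as stated is for $\fre\subset\bbR$, so it does not literally cover $\fre=\partial\bbD$; the OPUC version you need is the original M\'at\'e--Nevai result \cite{MN} referenced in Remark~2 following Theorem~\ref{T14.1}, but the mathematical content is exactly what you invoke.
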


\begin{remark} It is possible (see remarks in Section~4.6 of \cite{NevFr}) that \eqref{17.4}
holds if a Szeg\H{o} condition is replaced by $w(\theta) >0$ for a.e.\ $\theta$. Indeed,
under that hypothesis, Simon \cite{Weak-cd} proved that
\[
\int_0^{2\pi} \abs{w(\theta)(n\lambda_n(\theta))^{-1} -1}\, \f{d\theta}{2\pi} \to 0
\]
\end{remark}

There have been significant extensions of Theorem~\ref{T17.2} to OPRL on fairly general sets:
\begin{SL}
\item[1.] \cite{MNT91} used the idea of Nevai trial functions (Section~\ref{s16}) to prove the
Szeg\H{o} condition could be replaced by regularity plus a local Szeg\H{o} condition.

\item[2.] \cite{MNT91} used the Szeg\H{o} mapping to get a result for $[-1,1]$.

\item[3.] Using polynomial mappings (see Section~\ref{s18}) plus approximation, Totik \cite{Tot}
proved a general result (see below); one can replace polynomial mappings by Floquet--Jost solutions
(see Section~\ref{s19}) in the case of continuous weights on an interval (see \cite{2exts}).
\end{SL}

\smallskip
Here is Totik's general result (extended from $\sigma(d\mu)\subset\fre$ to $\sigma_\ess (d\mu)
\subset\fre$):

\begin{theorem}[Totik \cite{Tot,Tot-prep}]\lb{T17.3} Let $\fre$ be a compact subset of $\bbR$.
Let $I\subset\fre$ be an interval. Let $d\mu$ have $\sigma_\ess
(d\mu) = \fre$ be regular for $\fre$ with
\begin{equation} \lb{17.5}
\int_I \log(w)\, dx > -\infty
\end{equation}
Then for a.e.\ $x_\infty\in I$,
\begin{equation} \lb{17.6}
\lim_{n\to\infty}\, \f{1}{n}\, K_n(x_\infty, x_\infty) = \f{\rho_\fre(x_\infty)}{w(x_\infty)}
\end{equation}
The same limit holds for $\f{1}{n} K_n(x_n,x_n)$ if $\sup_n n\abs{x_n-x_\infty} <\infty$. If $\mu_\s(I)
=\emptyset$ and $w$ is continuous and nonvanishing on $I$, then those limits are uniform on $x_\infty
\in I$ and on all $x_n$'s with $\sup_n n\abs{x_n-x_\infty}\leq A$ {\rm{(}}uniform for each fixed
$A${\rm{)}}.
\end{theorem}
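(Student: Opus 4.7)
The M\'at\'e--Nevai upper bound (Theorem~\ref{T14.1}) already gives, for every Lebesgue point $x_\infty\in I$,
\begin{equation*}
\liminf_{n\to\infty} \f{1}{n}K_n(x_\infty,x_\infty) \geq \f{\rho_\fre(x_\infty)}{w(x_\infty)}.
\end{equation*}
So the plan is to establish the matching upper bound $\limsup_n \f{1}{n}K_n(x_\infty,x_\infty)\leq \rho_\fre(x_\infty)/w(x_\infty)$, equivalently $\liminf_n n\lambda_n(x_\infty,d\mu)\geq w(x_\infty)/\rho_\fre(x_\infty)$ for a.e.\ $x_\infty\in I$. This is the deep direction; it rests on the Szeg\H{o} condition \eqref{17.5} and on the OPUC result Theorem~\ref{T17.1}.

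The strategy proceeds in three stages. \textbf{Stage 1} (single interval $\fre=[a,b]$): push $\mu$ through the Szeg\H{o} mapping to a symmetric measure on $\partial\bbD$ whose a.c.\ weight inherits a local Szeg\H{o} condition at the preimage of $x_\infty$ from \eqref{17.5}, then apply Theorem~\ref{T17.1} and unwind the mapping. \textbf{Stage 2} (periodic case $\fre=T^{-1}([-1,1])$ for a real polynomial $T$): OPRL on such $\fre$ relate to OPRL on $[-1,1]$ by composition with $T$, with explicit identities expressing $K_n^\fre$ at preimages $T^{-1}(y)$ in terms of $K_n^{[-1,1]}(y,y)$ and factors built from $T'$; combined with Stage~1 this yields \eqref{17.6} for every periodic $\fre$.

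\textbf{Stage 3} (general $\fre$): approximate from inside by periodic sets $\fre_k\nearrow\fre$ (Totik's density theorem). For each $k$, construct a comparison measure $\nu_k$ supported on $\fre_k$, regular for $\fre_k$, with a.c.\ weight equal to $w$ on a small fixed neighborhood $J$ of $x_\infty$; condition \eqref{17.5} supplies the local Szeg\H{o} control needed for Stage~2 to apply to $\nu_k$ at $x_\infty$. Let $Q_m(x;d\nu_k)$ be the Christoffel extremal polynomial for $\nu_k$ with $m=n-2[\veps n]$ and $Q_m(x_\infty;d\nu_k)=1$, and form the Nevai trial polynomial of Section~\ref{s16},
\begin{equation*}
P_n(x) = N_{2[\veps n]}(x)\, Q_m(x;d\nu_k).
\end{equation*}
Then $P_n(x_\infty)=1$ and $\deg P_n\leq n$. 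On $J$ the two weights coincide, so the $L^2(d\mu)$-mass of $P_n$ on $J$ is bounded by that of $Q_m$ on $J$ for $d\nu_k$; off $J$ the localizer decays geometrically, while $Q_m$ and $p_j(\cdot;d\mu)$ grow only sub-exponentially by regularity (Theorem~\ref{T12.1}), so the off-$J$ contribution---including that from any bands of $\sigma_\ess(\mu)\setminus\fre_k$---is negligible. The Christoffel variational principle then yields
\begin{equation*}
\limsup_{n\to\infty} n\lambda_n(x_\infty,d\mu) \leq (1+o_\veps(1))\,\f{w(x_\infty)}{\rho_{\fre_k}(x_\infty)},
\end{equation*}
and sending $\veps\downarrow 0$ and then $k\to\infty$, using $\rho_{\fre_k}(x_\infty)\to\rho_\fre(x_\infty)$ at Lebesgue points of the equilibrium density, completes the proof.

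The main obstacle is the coordination in Stage~3: one must simultaneously arrange regularity of $\nu_k$ for $\fre_k$, pointwise weight matching on $J$, and the validity of Stage~2 for $\nu_k$ at the particular point $x_\infty$. The Lebesgue-point hypothesis together with \eqref{17.5} make this feasible. The more delicate technical point is that $N_{2[\veps n]}$ must kill contributions from $\sigma_\ess(\mu)\setminus\fre_k$, where $Q_m$ is controlled only through regularity of $\nu_k$ on $\fre_k$, not on the extra bands; that the geometric decay of $N_{2[\veps n]}$ beats the sub-exponential growth of $Q_m$ there is the technical crux. The uniformity on compact subsets of $I$ where $w$ is continuous and nonvanishing, and the refinement $x_n\to x_\infty$ with $\sup_n n|x_n-x_\infty|<\infty$, follow because each ingredient (Theorem~\ref{T14.1} extended by Lubinsky, Theorem~\ref{T17.1} extended by Findley, and the Nevai trial-polynomial construction) was already set up to accommodate bounded $O(1/n)$ perturbations of the base point.
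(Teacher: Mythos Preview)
The paper does not give its own proof of Theorem~\ref{T17.3}; it cites Totik and sketches the method in the text preceding the theorem and in Section~\ref{s18} (Szeg\H{o} mapping to $[-1,1]$, then polynomial mappings to periodic sets, then approximation to general~$\fre$). Your Stages~1--2 match that sketch. But Stage~3 has the trial-polynomial argument running in the wrong direction, and this is a genuine gap.

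You correctly identify that what remains after Theorem~\ref{T14.1} is the lower bound $\liminf_n n\lambda_n(x_\infty,d\mu)\geq w(x_\infty)/\rho_\fre(x_\infty)$. However, taking the extremal polynomial $Q_m(\,\cdot\,;d\nu_k)$ for the \emph{model} and using $N_{2[\veps n]}Q_m$ as a trial in the variational principle \eqref{9.6} for $\mu$ yields an \emph{upper} bound on $\lambda_n(x_\infty,d\mu)$---precisely the M\'at\'e--Nevai direction already supplied by Theorem~\ref{T14.1}. Your displayed conclusion $\limsup_n n\lambda_n(x_\infty,d\mu)\leq w(x_\infty)/\rho_{\fre_k}(x_\infty)$ is consistent with the construction you wrote down, but it is the inequality you already had, not the one you set out to prove; the opening and closing of Stage~3 are internally inconsistent.

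To obtain the MNT lower bound one must reverse the roles: take the extremal polynomial $R_m(\,\cdot\,;d\mu)$ for $\mu$, localize it, and use $N_{2[\veps n]}R_m$ as a trial in the variational principle for $\nu_k$. On $J$ the weights agree, so that contribution is at most $\lambda_m(x_\infty,d\mu)$; off $J$ the contribution is negligible because $R_m$ is sub-exponentially bounded near $\fre$ by \emph{regularity of $\mu$} (Theorem~\ref{T12.1} and \eqref{12.4})---and this, not regularity of $\nu_k$, is where the regularity hypothesis on $\mu$ actually enters. Since Stage~2 gives $\lim_n n\lambda_n(x_\infty,d\nu_k)=w(x_\infty)/\rho_{\fre_k}(x_\infty)$, one obtains $\liminf_n n\lambda_n(x_\infty,d\mu)\geq (1-O(\veps))\,w(x_\infty)/\rho_{\fre_k}(x_\infty)$, and then $\veps\downarrow 0$, $k\to\infty$ finishes. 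A side remark: the Bogatyr\"ev--Peherstorfer--Totik approximation described in Section~\ref{s18} is from \emph{outside} ($\fre\subset\fre_k$), not from inside as you write; with the corrected argument this is natural, since one needs $R_m$ controlled on $\supp(\nu_k)=\fre_k$, and Theorem~\ref{T12.1} supplies that on any sufficiently thin neighborhood of~$\fre$.
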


\begin{remarks} 1. Totik \cite{Tot-Chr-prep} recently proved asymptotic results for suitable CD kernels
for OPs which are neither OPUC nor OPRL.

\smallskip
2. The extension to general compact $\fre$ without an assumption of regularity for the Dirichlet
problem is in \cite{Tot-prep}.
\end{remarks}

%%%%%%%%%%%%%%%%%%%%%%%%%%%%%%%%%%%%%%%%%%%%%%%%%%%%%%%%%%%%%%%
\section{Variational Principle: Polynomial Maps} \lb{s18}
%%%%%%%%%%%%%%%%%%%%%%%%%%%%%%%%%%%%%%%%%%%%%%%%%%%%%%%%%%%%%%%

In passing from $[-1,1]$ to fairly general sets, one uses a three-step process. A finite gap
set is an $\fre$ of the form
\begin{equation} \lb{18.1}
\fre=[\alpha_1,\beta_1] \cup [\alpha_2, \beta_2] \cup \cdots \cup [\alpha_{\ell+1}, \beta_{\ell+1}]
\end{equation}
where
\begin{equation} \lb{18.2}
\alpha_1 < \beta_1 < \alpha_2 < \beta_2 < \cdots < \alpha_{\ell+1} < \beta_{\ell+1}
\end{equation}

$\calE_f$ will denote the family of finite gap sets. We write $\fre=\fre_1 \cup \cdots \cup \fre_{\ell+1}$
in this case with the $\fre_j$ closed disjoint intervals. $\calE_p$ will denote the set of what we called
periodic finite gap sets in Section~\ref{s10}---ones where each $\fre_j$ has rational harmonic measure.
Here are the three steps:
\begin{SL}
\item[(1)] Extend to $\fre\in\calE_p$ using the methods discussed briefly below.
\item[(2)] Prove that given any $\fre\in\calE_f$, there is $\fre^{(n)}\in\calE_p$, each with the same
number of bands so $\fre_j\subset\fre_j^{(n)}\subset\fre_j^{(n-1)}$ and $\cap_n \fre_j^{(n)} = \fre_j$.
This is a result proven independently by Bogatyr\"ev \cite{Bog}, Peherstorfer \cite{Peh}, and Totik
\cite{Tot-acta}; see \cite{Rice} for a presentation of Totik's method.

\item[(3)] Note that for any compact $\fre$, if $\fre^{(m)}=\{x\mid \dist(x,\fre)\leq \f{1}{m}\}$, then
$\fre^{(m)}$ is a finite gap set and $\fre = \cap_m \fre^{(m)}$.
\end{SL}

\smallskip
Step (1) is the subtle step in extending theorems: Given the Bogatyr\"ev--Peherstorfer--Totik theorem,
the extensions are simple approximation.

The key to $\fre\in\calE_p$ is that there is a polynomial $\ti\Delta\colon\bbC\to\bbC$, so $\ti\Delta^{-1}
([-1,1])=\fre$ and so that $\fre_j$ is a finite union of intervals $\ti\fre_k$ with disjoint interiors
so that $\ti\Delta$ is a bijection from each $\ti\fre_k$ to $[-1,1]$. That this could be useful
was noted initially by Geronimo--Van Assche \cite{GVA}. Totik showed how to prove Theorem~\ref{T17.3}
for $\fre\in\calE_p$ from the results for $[-1,1]$ using this polynomial mapping.

For spectral theorists, the polynomial $\ti\Delta=\f12\Delta$ where $\Delta$ is the discriminant for the
associated periodic problem (see \cite{Hoch7,Last92,vMoer,Toda,Rice}). There is a direct construction of
$\ti\Delta$ by Aptekarev \cite{Apt} and Peherstorfer \cite{Pe93,Peh,Pe03}.

%%%%%%%%%%%%%%%%%%%%%%%%%%%%%%%%%%%%%%%%%%%%%%%%%%%%%%%%%%%%%%%%%%%%%%%
\section{Floquet--Jost Solutions for Periodic Jacobi Matrices} \lb{s19}
%%%%%%%%%%%%%%%%%%%%%%%%%%%%%%%%%%%%%%%%%%%%%%%%%%%%%%%%%%%%%%%%%%%%%%%

As we saw in Section~\ref{s16}, models with appropriate behavior are useful input for comparison
theorems. Periodic Jacobi matrices have OPs for which one can study the CD kernel and its asymptotics.
The two main results concern diagonal and just off-diagonal behavior:

\begin{theorem}\lb{T19.1} Let $\mu$ be the spectral measure associated to a periodic Jacobi matrix with
essential spectrum, $\fre$, a finite gap set. Let $d\mu=w(x)\, dx$ on $\fre$ {\rm{(}}there can also be
up to one eigenvalue in each gap{\rm{)}}. Then uniformly for $x$ in compact subsets of $\fre^\intt$,
\begin{equation} \lb{19.1}
\f{1}{n}\, K_n(x,x) \to \f{\rho_\fre(x)}{w(x)}
\end{equation}
and uniformly for such $x$ and $a,b$ in $\bbR$ with $\abs{a}\leq A$, $\abs{b}\leq B$,
\begin{equation} \lb{19.2}
\f{K_n(x+\f{a}{n}, x+\f{b}{n})}{K_n(x,x)} \to \f{\sin(\pi\rho_\fre(x) (b-a))}{\pi\rho_\fre(x)(b-a)}
\end{equation}
\end{theorem}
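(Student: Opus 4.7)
The plan is to invoke Floquet--Jost theory to get explicit oscillatory representations of $p_n(x)$ on the bands, then evaluate the Christoffel--Darboux sum term by term. Suppose $J$ has period $p$; on $\fre^\intt$ the $p$-step transfer matrix $T_p(x)$ has complex conjugate eigenvalues $e^{\pm i k(x)}$ with $k(x)\in(0,\pi)$ real-analytic in $x$, producing two linearly independent Floquet solutions $u_\pm(j,x) = e^{\pm ik(x)j/p}\phi_\pm(j,x)$ with $\phi_\pm(\cdot,x)$ $p$-periodic and real-analytic in $x$. Since $p_j$ is real, one writes $p_j(x) = A(x) u_+(j,x) + \overline{A(x)u_+(j,x)}$ with $A$ real-analytic and nonvanishing on $\fre^\intt$ (else $p_j(x_0)=0$ for all $j$, contradicting $p_0=1$), so that
\begin{equation*}
|p_j(x)|^2 = 2|A(x)|^2|\phi_+(j,x)|^2 + 2\Re\bigl(A(x)^2 e^{2ik(x)j/p}\phi_+(j,x)^2\bigr).
\end{equation*}

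For \eqref{19.1}, I would sum over $j$. The oscillatory piece contributes $O(1)$ uniformly on compacts of $\fre^\intt$: split by residue class mod $p$ and sum the resulting geometric series, using $e^{2ik(x)}\neq 1$ there. The stationary piece yields $n\cdot 2|A(x)|^2\overline{|\phi_+(\cdot,x)|^2}+O(1)$, with the overline denoting period average. Hence $\frac{1}{n}K_n(x,x)\to g(x):=2|A(x)|^2\overline{|\phi_+(\cdot,x)|^2}$ uniformly on such compacts. To identify $g$ with $\rho_\fre/w$, invoke Theorem~\ref{T13.1}: periodic Jacobi matrices are regular, so combined with the weak limit $d\nu_n\to d\rho_\fre$ one gets $\frac{1}{n}K_n(x,x)\,d\mu\to d\rho_\fre$ weakly, and the pointwise convergence then forces $g(x)w(x) = \rho_\fre(x)$ on $\fre^\intt$, proving \eqref{19.1}.

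For \eqref{19.2}, Taylor-expand $k(x+c/n) = k(x) + ck'(x)/n + O(1/n^2)$ for $c=a,b$, treating $A(x+c/n)$ and $\phi_\pm(j,x+c/n)$ as equal to their $x$-values to leading order. Expanding the real product $p_j(x+a/n)p_j(x+b/n)$, the terms carrying the fast phase $e^{\pm 2ik(x)j/p}$ again sum to $O(1)$, while the slow-oscillating survivor is
\begin{equation*}
2|A(x)|^2\sum_{j=0}^n |\phi_+(j,x)|^2 \cos\!\bigl((b-a)k'(x)j/(np)\bigr) = n g(x)\,\frac{\sin((b-a)k'(x)/p)}{(b-a)k'(x)/p} + o(n),
\end{equation*}
by a Riemann-sum approximation after splitting by residue mod $p$. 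Dividing by $K_n(x,x)\sim n g(x)$ and using the standard identity $k'(x)/p = \pi\rho_\fre(x)$ (the density of states is $1/\pi$ times the derivative of the per-site quasimomentum, via $\Delta(x)=2\cos k(x)$) yields \eqref{19.2}.

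The main obstacle will be ensuring uniformity: on compacts of $\fre^\intt$, and for \eqref{19.2} uniformly in $|a|\le A$, $|b|\le B$. This hinges on $e^{2ik(x)}$ staying bounded away from $1$ on such compacts (it approaches $1$ only at band endpoints, which lie in $\partial\fre$ rather than $\fre^\intt$), and on the fact that the Taylor remainder in the phase $k(x+c/n)j/p$ is $O(j/n^2)=O(1/n)$ for $j\le n$, so $e^{i\cdot\text{(error)}}-1 = O(1/n)$ uniformly and contributes only $O(1)$ to the sum, which is absorbed harmlessly after dividing by $n$.
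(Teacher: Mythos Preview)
Your proposal is correct and follows essentially the same route the paper sketches: expand $p_j$ in the Floquet--Jost basis $u$, $\bar u$, separate the fast ($e^{\pm 2ik j/p}$) and slow phases in the CD sum, and invoke the identity $\rho_\fre = |k'|/(p\pi)$ (the paper's \eqref{19.5}) to recognize the sinc kernel in \eqref{19.2}. The one genuine deviation is in identifying the diagonal limit: the paper (and the references it defers to) compute $g$ and $w$ directly from the Floquet data and match them via \eqref{19.5}, whereas you short-circuit this by appealing to regularity and the weak convergence of Theorem~\ref{T13.1}/Corollary~\ref{C13.2} to force $g\,w=\rho_\fre$; this is a legitimate and arguably cleaner shortcut, though it does import the fact that periodic Jacobi matrices are regular (equivalently, $C(\fre)=(a_1\cdots a_p)^{1/p}$), which you should state rather than assume.
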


\begin{remarks} 1. \eqref{19.2} is often called bulk universality. On bounded intervals, it goes back
to random matrix theory. The best results using Riemann--Hilbert methods for OPs is due to
Kuijlaars--Vanlessen \cite{KV}.  A different behavior is expected at the edge of the spectrum---we
will not discuss this in detail, but see Lubinsky \cite{Lubppt}.

\smallskip
2. For $[-1,1]$, Lubinsky \cite{Lub} used Legendre polynomials as his model. The references for the
proofs here are Simon \cite{2exts, Rice}.
\end{remarks}

The key to the proof of Theorem~\ref{T19.1} is to use Floquet--Jost solutions, that is, solutions of
\begin{equation} \lb{19.3}
a_n u_{n+1} + b_n u_n + a_{n-1} u_{n-1} = xu_n
\end{equation}
for $n\in\bbZ$ where $\{a_n, b_n\}$ are extended periodically to all of $\bbZ$. These solutions obey
\begin{equation} \lb{19.4}
u_{n+p} = e^{i\theta(x)} u_n
\end{equation}
For $x\in\fre^\intt$, $u_n$ and $\bar u_n$ are linearly independent, and so one can write $p_{\bddot -1}$
in terms of $u_\bddot$ and $\bar u_\bddot$. Using
\begin{equation} \lb{19.5}
\rho_\fre(x) = \f{1}{p\pi}\, \biggl| \f{d\theta}{dx}\biggr|
\end{equation}
one can prove \eqref{19.1} and \eqref{19.2}. The details are in \cite{2exts, Rice}.

%%%%%%%%%%%%%%%%%%%%%%%%%%%%%%%%%%%%%%%%%%%%%%%%%%%%%%%%%%%%%%%
\section{Lubinsky's Inequality and Bulk Universality} \lb{s20}
%%%%%%%%%%%%%%%%%%%%%%%%%%%%%%%%%%%%%%%%%%%%%%%%%%%%%%%%%%%%%%%

Lubinsky \cite{Lub} found a powerful tool for going from diagonal control of the CD kernel to slightly
off-diagonal control---a simple inequality.

\begin{theorem}\lb{T20.1} Let $\mu\leq\mu^*$ and let $K_n,K_n^*$ be their CD kernels. Then for any
$z,\zeta$,
\begin{equation} \lb{20.1}
\abs{K_n(z,\zeta) - K_n^*(z,\zeta)}^2 \leq K_n(z,z) [K_n(\zeta,\zeta)-K_n^*(\zeta,\zeta)]
\end{equation}
\end{theorem}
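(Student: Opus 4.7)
The proof will exploit the reproducing property of $K_n$ together with the simple observation that $d\mu \le d\mu^*$ controls integrals against $d\mu$ by integrals against $d\mu^*$. The plan is to apply Cauchy--Schwarz to the reproducing identity for the difference $K_n(\,\cdot\,,\zeta) - K_n^*(\,\cdot\,,\zeta)$, which will produce the factor $K_n(z,z)$ on the right-hand side; then to evaluate the resulting $L^2(d\mu)$ norm of the difference by expanding the square and identifying each of the three terms using the reproducing property \eqref{1.12} (once for $\mu$ and once for $\mu^*$).

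First, fix $\zeta$ and consider $f(w) := K_n(\zeta,w) - K_n^*(\zeta,w)$, which is a polynomial in $w$ of degree $\le n$. Apply \eqref{1.11} to $f$ for the measure $\mu$, giving
\[
f(z) = \int K_n(w,z)\,f(w)\,d\mu(w),
\]
and apply Cauchy--Schwarz using $\int\abs{K_n(w,z)}^2\,d\mu(w) = K_n(z,z)$ (a special case of \eqref{1.12}) to obtain
\[
\abs{f(z)}^2 \le K_n(z,z)\int \abs{K_n(\zeta,w)-K_n^*(\zeta,w)}^2\,d\mu(w).
\]
Since $\abs{K_n(\zeta,z)-K_n^*(\zeta,z)} = \abs{K_n(z,\zeta)-K_n^*(z,\zeta)}$ by conjugate symmetry, the left-hand side already matches the target.

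Second, expand the integral on the right as $A - 2\operatorname{Re} B + C$ with
\[
A=\int\abs{K_n(\zeta,w)}^2\,d\mu, \qquad B = \int K_n(\zeta,w)\,\overline{K_n^*(\zeta,w)}\,d\mu,\qquad C = \int\abs{K_n^*(\zeta,w)}^2\,d\mu.
\]
The reproducing property \eqref{1.12} for $\mu$ gives $A = K_n(\zeta,\zeta)$. For $B$, note $\overline{K_n^*(\zeta,w)} = K_n^*(w,\zeta)$ is a polynomial in $w$ of degree $\le n$, so applying \eqref{1.11} for $\mu$ at the point $\zeta$ yields $B = K_n^*(\zeta,\zeta)$, which is real. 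For $C$, the integrand is pointwise nonnegative and $d\mu \le d\mu^*$, hence
\[
C \le \int\abs{K_n^*(\zeta,w)}^2\,d\mu^*(w) = K_n^*(\zeta,\zeta)
\]
by the same reproducing identity, now for $\mu^*$. Combining, $A - 2B + C \le K_n(\zeta,\zeta) - K_n^*(\zeta,\zeta)$, which plugged back into the Cauchy--Schwarz estimate yields \eqref{20.1}.

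There is no real obstacle here; the only place that requires care is the bookkeeping of complex conjugates in the cross term $B$, where one must exploit that $\overline{K_n^*(\zeta,\cdot)} = K_n^*(\cdot,\zeta)$ is a genuine polynomial in its argument (in Simon's convention $K_n(z,\zeta)$ is a polynomial in $\zeta$ but only in $\bar z$), so that the reproducing property applies in the first slot. This is automatic in the OPRL setting where $z,\zeta$ are real and conjugation is invisible, which is the setting of Lubinsky's original inequality.
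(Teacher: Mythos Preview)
Your argument is essentially the paper's own proof: write the difference via the reproducing property, apply Cauchy--Schwarz to extract $K_n(z,z)$, expand the remaining $L^2(d\mu)$ norm, and bound the $\abs{K_n^*}^2$ term using $\mu\le\mu^*$ together with \eqref{1.12} for $\mu^*$.

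One small slip in the bookkeeping you flagged: in Simon's convention $K_n^*(w,\zeta)=\overline{K_n^*(\zeta,w)}$ is a polynomial in $\bar w$, not in $w$, so you cannot feed it directly into \eqref{1.11}. The clean way to handle $B$ is to conjugate first: $\bar B=\int K_n(w,\zeta)\,K_n^*(\zeta,w)\,d\mu(w)$, and now $K_n^*(\zeta,\cdot)$ \emph{is} a polynomial in the second slot while $K_n(w,\zeta)$ is in the correct form for \eqref{1.11}, giving $\bar B=K_n^*(\zeta,\zeta)$, which is real. With this correction your proof is complete and matches the paper.
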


\begin{remark} Recall (Theorem~\ref{T9.3}) that $K_n(\zeta,\zeta)\geq K_n^*(\zeta,\zeta)$.
\end{remark}

\begin{proof} Since $K_n-K_n^*$ is a polynomial $\bar z$ of degree $n$:
\begin{equation} \lb{20.2}
K_n(z,\zeta) - K_n^*(z,\zeta) = \int K_n(z,w) [K_n (w,\zeta) - K_n^*(w,\zeta)]\, d\mu(w)
\end{equation}
By the reproducing kernel formula \eqref{1.12}, we get \eqref{20.1} from the Schwarz inequality
if we show
\begin{equation} \lb{20.3}
\int \abs{K_n(w,\zeta) - K_n^*(w,\zeta)}^2\, d\mu(w) \leq K_n(\zeta,\zeta) - K_n^*(\zeta,\zeta)
\end{equation}

Expanding the square, the $K_n^2$ term is $K_n(\zeta,\zeta)$ by \eqref{1.12} and the $K_n K_n^*$
cross term is $-2K_n^*(\zeta,\zeta)$ by the reproducing property of $K_n$ for $d\mu$ integrals.
Thus, \eqref{20.3} is equivalent to
\begin{equation} \lb{20.4}
\int \abs{K_n^*(w,\zeta)}^2\, d\mu(w) \leq K_n^*(\zeta,\zeta)
\end{equation}
This in turn follows from $\mu\leq\mu^*$ and \eqref{1.12} for $\mu^*$!
\end{proof}

This result lets one go from diagonal control on measures to off-diagonal. Given any pair of measures,
$\mu$ and $\nu$, there is a unique measure $\mu\vee\nu$ which is their least upper bound (see, e.g.,
Doob \cite{Doob}). It is known (see \cite{EqMC}) that if $\mu,\nu$ are regular for the same set,
so is $\mu\vee\nu$. \eqref{20.1} immediately implies that (go from $\mu$ to $\mu^*$ and then
$\mu^*$ to $\nu$):

\begin{corollary}\lb{C20.2} Let $\mu,\nu$ be two measures and $\mu^* = \mu\vee \nu$. Suppose for some
$z_n\to z_\infty$, $w_n\to z_\infty$, we have for $\eta=\mu,\nu,\mu^*$ that
\[
\lim_{n\to\infty}\, \f{K_n(z_n,z_n;\eta)}{K_n(z_\infty,z_\infty;\eta)} =
\lim_{n\to\infty}\, \f{K_n(w_n,w_n;\eta)}{K_n(z_\infty,z_\infty;\eta)} =1
\]
and that
\[
\lim_{n\to\infty}\, \f{K_n(z_\infty,z_\infty;\mu)}{K_n(z_\infty,z_\infty;\mu^*)} =
\lim_{n\to\infty}\, \f{K_n(z_\infty,z_\infty;\nu)}{K_n(z_\infty,z_\infty;\mu^*)} = 1
\]
Then
\begin{equation} \lb{20.5}
\lim_{n\to\infty}\, \f{K_n(z_n,w_n;\mu)}{K_n(z_n,w_n;\nu)} = 1
\end{equation}
\end{corollary}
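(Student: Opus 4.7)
The plan is to apply Lubinsky's inequality (Theorem~\ref{T20.1}) twice, comparing $\mu$ and $\nu$ each to their common upper bound $\mu^*$, and then chain the two estimates through $\mu^*$. Since $\mu \le \mu^*$ and $\nu \le \mu^*$, for each $\eta \in \{\mu,\nu\}$ the inequality at the pair $(z_n,w_n)$ reads
\beq
|K_n(z_n, w_n; \eta) - K_n(z_n, w_n; \mu^*)|^2 \le K_n(z_n, z_n; \eta)\bigl[K_n(w_n, w_n; \eta) - K_n(w_n, w_n; \mu^*)\bigr].
\eeq

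The first step is to divide both sides by $K_n(z_\infty, z_\infty; \mu^*)^2$ and observe that, by writing
\beq
\f{K_n(z_n, z_n; \eta)}{K_n(z_\infty, z_\infty; \mu^*)} = \f{K_n(z_n, z_n; \eta)}{K_n(z_\infty, z_\infty; \eta)} \cdot \f{K_n(z_\infty, z_\infty; \eta)}{K_n(z_\infty, z_\infty; \mu^*)},
\eeq
(and likewise at $w_n$), the four hypotheses combine to show that all such ratios tend to $1$ for $\eta \in \{\mu,\nu,\mu^*\}$. Hence the scaled right-hand side factors as $(1+o(1)) \cdot [(1+o(1)) - (1+o(1))] = o(1)$, so for $\eta \in \{\mu,\nu\}$
\beq
\f{K_n(z_n, w_n; \eta) - K_n(z_n, w_n; \mu^*)}{K_n(z_\infty, z_\infty; \mu^*)} \to 0.
\eeq
Subtracting the $\mu$- and $\nu$-versions of this relation gives $[K_n(z_n, w_n; \mu) - K_n(z_n, w_n; \nu)]/K_n(z_\infty, z_\infty; \mu^*) \to 0$.

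The final step is to convert this difference-goes-to-zero statement into the ratio statement \eqref{20.5}, by writing
\beq
\f{K_n(z_n, w_n; \mu)}{K_n(z_n, w_n; \nu)} - 1 = \f{K_n(z_n, w_n; \mu) - K_n(z_n, w_n; \nu)}{K_n(z_\infty, z_\infty; \mu^*)} \cdot \f{K_n(z_\infty, z_\infty; \mu^*)}{K_n(z_n, w_n; \nu)}.
\eeq
The first factor on the right tends to $0$ by the preceding step, so the main obstacle is to control the second factor---equivalently, to bound $K_n(z_n, w_n; \nu)/K_n(z_\infty, z_\infty; \mu^*)$ away from $0$. The estimates already obtained show this ratio is asymptotic to $K_n(z_n, w_n; \mu^*)/K_n(z_\infty, z_\infty; \mu^*)$, so the question reduces to a nondegeneracy property of the off-diagonal kernel for the comparison measure $\mu^*$. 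In the intended applications---bulk universality with $w_n - z_n$ of order $1/n$---this property holds automatically because the limiting object is a nonzero sinc-type kernel (cf.\ \eqref{19.2}), so the denominator has a positive limit and \eqref{20.5} follows; this appears to be an implicit regularity hypothesis of the corollary as stated.
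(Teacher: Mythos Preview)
Your approach is exactly what the paper intends: it gives no proof beyond the parenthetical hint ``go from $\mu$ to $\mu^*$ and then $\mu^*$ to $\nu$,'' which is precisely your two applications of Lubinsky's inequality \eqref{20.1} chained through the common majorant. Your derivation of
\[
\f{K_n(z_n,w_n;\eta)-K_n(z_n,w_n;\mu^*)}{K_n(z_\infty,z_\infty;\mu^*)}\to 0
\qquad(\eta=\mu,\nu)
\]
from the stated hypotheses is correct and is all that the one-line proof in the paper is claiming.

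You are also right to flag the passage from this difference statement to the ratio \eqref{20.5}: as written, the corollary's hypotheses do not by themselves prevent $K_n(z_n,w_n;\nu)/K_n(z_\infty,z_\infty;\mu^*)$ from drifting to zero (e.g., if $w_n-z_n$ happens to sit near a zero of the off-diagonal kernel). The paper does not address this; it is used only in the bulk-universality setting of Theorem~\ref{T20.3}, where $\nu$ is a model measure for which $K_n(z_n,w_n;\nu)/K_n(z_\infty,z_\infty;\nu)$ has a known nonzero sinc limit, so the issue does not arise. In other words, the nondegeneracy you single out is indeed an implicit extra input supplied by the application rather than by the corollary's stated hypotheses, and your identification of it is accurate.
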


\begin{remark} It is for use with $x_n = x_\infty + \f{a}{n}$ or $x_\infty + \f{a}{\rho_n n}$
that we added $x_n\to x_\infty$ to the various diagonal kernel results. This ``wiggle'' in $x_\infty$
was introduced by Lubinsky \cite{Lub}, so we dub it the ``Lubinsky wiggle.''
\end{remark}

Given Totik's theorem (Theorem~\ref{T17.3}) and bulk universality for suitable models, one
thus gets:

\begin{theorem}\lb{T20.3} Under the hypotheses of Theorem~\ref{T17.3}, for a.e.\ $x_\infty$ in
$I$, we have uniformly for $\abs{a},\abs{b} <A$,
\[
\lim_{n\to\infty}\, \f{K_n(x_\infty + \f{a}{n},x_\infty + \f{b}{n})}{K_n(x_\infty,x_\infty)} =
\f{\sin(\pi\rho_\fre (x_\infty)(b-a))}{\pi\rho_\fre (x_\infty)(b-a)}
\]
\end{theorem}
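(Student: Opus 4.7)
The plan is to transfer the sinc-kernel asymptotic from a periodic Jacobi comparison to $\mu$ via Lubinsky's inequality, combining the diagonal asymptotic with Lubinsky wiggle from Theorem~\ref{T17.3} with bulk universality for periodic matrices from Theorem~\ref{T19.1}. Fix $x_\infty\in I$ at which Theorem~\ref{T17.3} applies; this is automatic for a.e.\ $x_\infty\in I$, and uniformly so on compact subsets under the continuity/nonvanishing hypothesis. For each $m$, use the Bogatyr\"ev--Peherstorfer--Totik scheme (Section~\ref{s18}, Step~(2)) to produce a periodic finite-gap set $\fre_m\in\calE_p$ with $\fre\subset\fre_m$ and $\fre_m\downarrow\fre$, so that $\rho_{\fre_m}(x_\infty)\to\rho_\fre(x_\infty)$ by continuity of the equilibrium density under Hausdorff approximation. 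Let $\tilde\nu_m$ be a periodic Jacobi spectral measure with essential spectrum $\fre_m$, and set $\nu_m=c_m\tilde\nu_m$ with $c_m>0$ chosen (using \eqref{1.8a}) so that the diagonal asymptotes match at $x_\infty$:
\[
\lim_{n\to\infty}\frac{K_n(x_\infty,x_\infty;\nu_m)}{n}=\frac{\rho_\fre(x_\infty)}{w(x_\infty)}.
\]
Set $\mu^{*}=\mu\vee\nu_m$.

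Now apply Lubinsky's inequality (Theorem~\ref{T20.1}) to both pairs $(\mu,\mu^{*})$ and $(\nu_m,\mu^{*})$ at $z_n=x_\infty+a/n$, $w_n=x_\infty+b/n$. Writing $\epsilon_m=|\rho_{\fre_m}(x_\infty)-\rho_\fre(x_\infty)|$, the diagonal asymptotes of $\mu$, $\nu_m$, and $\mu^{*}$ at $w_n$ pairwise differ by $O(\epsilon_m/w(x_\infty))$ (indeed the asymptote of $\mu^{*}$ works out to $\min(\rho_\fre,\rho_{\fre_m})(x_\infty)/w(x_\infty)$); thus Theorem~\ref{T17.3} applied to each measure gives
\[
K_n(w_n,w_n;\mu)-K_n(w_n,w_n;\mu^{*})=O(n\epsilon_m)+o(n),
\]
and the same with $\mu$ replaced by $\nu_m$. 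Since $K_n(z_n,z_n;\mu),K_n(z_n,z_n;\nu_m)=O(n)$, Lubinsky's inequality and the triangle inequality yield
\[
|K_n(z_n,w_n;\mu)-K_n(z_n,w_n;\nu_m)|\leq Cn\sqrt{\epsilon_m}+o(n).
\]
Dividing by $K_n(x_\infty,x_\infty;\mu)\sim n\rho_\fre(x_\infty)/w(x_\infty)$, and invoking Theorem~\ref{T19.1} for the $\nu_m$ ratio together with the diagonal matching at $x_\infty$ built into $c_m$, one obtains
\[
\limsup_{n\to\infty}\left|\frac{K_n(z_n,w_n;\mu)}{K_n(x_\infty,x_\infty;\mu)}-\frac{\sin(\pi\rho_{\fre_m}(x_\infty)(b-a))}{\pi\rho_{\fre_m}(x_\infty)(b-a)}\right|\leq C'\sqrt{\epsilon_m}.
\]
Sending $m\to\infty$ replaces $\rho_{\fre_m}(x_\infty)$ by $\rho_\fre(x_\infty)$ in the sinc kernel and $\epsilon_m$ by $0$, yielding the claimed limit; uniformity in $|a|,|b|\leq A$ is inherited from the corresponding uniformities in Theorems~\ref{T17.3} and \ref{T19.1} and the uniform bound in Lubinsky's inequality.

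The main obstacle is verifying the hypotheses for Theorem~\ref{T17.3} to apply to $\mu^{*}=\mu\vee\nu_m$ at $x_\infty$---namely, regularity of $\mu^{*}$ for its essential spectrum $\fre_m$, the local Szeg\H{o} condition on $I$, and the condition that $x_\infty$ be a Lebesgue point of the appropriate type. Regularity of $\mu$ for $\fre$ upgrades to regularity for $\fre_m\supset\sigma(\mu)$ by standard potential theory, and combines with regularity of $\nu_m$ for $\fre_m$ to give regularity of $\mu^{*}$ for $\fre_m$ (cf.\ the discussion preceding Corollary~\ref{C20.2}); the Szeg\H{o} and Lebesgue-point conditions inherit from $\mu$ and $\nu_m$, and the full-measure condition on $x_\infty$ holds off a Lebesgue-null set in $I$ that can be chosen independently of $m$.
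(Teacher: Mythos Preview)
Your approach is essentially the one the paper intends: combine Theorem~\ref{T17.3} (diagonal control with Lubinsky wiggle) with Theorem~\ref{T19.1} (universality for periodic models) via Lubinsky's inequality, Theorem~\ref{T20.1}. The paper packages the transfer as Corollary~\ref{C20.2}, which requires exact matching of diagonal asymptotes; your quantitative variant with the error $\sqrt{\veps_m}$ and $m\to\infty$ is the natural way to handle general (non-periodic) $\fre$ where no periodic model with $\sigma_\ess=\fre$ is available, and it is correct in outline.

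There is, however, a genuine error in your final paragraph. The sentence ``Regularity of $\mu$ for $\fre$ upgrades to regularity for $\fre_m\supset\sigma(\mu)$ by standard potential theory'' is false. Regularity for $\fre_m$ means $\lim_n(a_1\cdots a_n)^{1/n}=C(\fre_m)$, but $\mu$ is regular for $\fre$, so this limit equals $C(\fre)<C(\fre_m)$; the Jacobi parameters of $\mu$ do not change when you enlarge the reference set. Consequently you cannot invoke the fact cited before Corollary~\ref{C20.2} (that $\mu\vee\nu$ is regular when $\mu,\nu$ are regular \emph{for the same set}).

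The conclusion you need---that $\mu^{*}=\mu\vee\nu_m$ is regular for $\fre_m$---is nonetheless true, for a different reason. One has $\sigma_\ess(\mu^{*})=\fre\cup\fre_m=\fre_m$, and the absolutely continuous part of $\mu^{*}$ has density $\max(w,w_{\nu_m})\geq w_{\nu_m}>0$ a.e.\ on $\fre_m$ (periodic spectral measures have strictly positive a.c.\ density on the interior of each band). By the Widom/Stahl--Totik criterion (see \cite{StT,EqMC}), a measure whose a.c.\ part is positive a.e.\ on its essential spectrum is regular. This also takes care of the local Szeg\H{o} condition on $I$ for $\mu^{*}$, since $\log w_{\mu^{*}}\geq\log w_{\nu_m}$ is bounded below there. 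With this correction, and the observation that the countable intersection over $m$ of the full-measure sets of admissible $x_\infty$ for $\mu^{*}_m$ is still of full measure, your argument goes through.
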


\begin{remarks} 1. For $\fre=[-1,1]$, the result and method are from Lubinsky \cite{Lub}.

\smallskip
2. For continuous weights, this is in Simon \cite{2exts} and Totik \cite{Tot-prep}, and
for general weights, in Totik \cite{Tot-prep}.
\end{remarks}

%%%%%%%%%%%%%%%%%%%%%%%%%%%%%%%%%%%%%%%%%%%%%%%%%%%%%%%%%%%%%%%
\section{Derivatives of CD Kernels} \lb{s21}
%%%%%%%%%%%%%%%%%%%%%%%%%%%%%%%%%%%%%%%%%%%%%%%%%%%%%%%%%%%%%%%

The ideas in this section come from a paper in preparation with Avila and Last \cite{ALSinprep}.
Variation of parameters is a standard technique in ODE theory and used as an especially
powerful tool in spectral theory by Gilbert--Pearson \cite{GP} and in Jacobi matrix spectral
theory by Khan--Pearson \cite{KPe}. It was then developed by Jitomirskaya--Last \cite{JL0,JL1,JL2}
and Killip--Kiselev--Last \cite{KKL}, from which we take Proposition~\ref{P21.1}.

\begin{proposition}\lb{P21.1} For any $x,x_0$, we have
\begin{equation} \lb{21.2}
p_n(x)-p_n(x_0) = (x-x_0) \sum_{m=0}^{n-1} (p_n(x_0) q_m(x_0)-p_m(x_0) q_n(x_0)) p_m(x)
\end{equation}
In particular,
\begin{equation} \lb{21.3}
p'_n (x_0) =\sum_{m=0}^{n-1} (p_n(x_0) q_m(x_0) - p_m(x_0) q_n(x_0)) p_m(x_0)
\end{equation}
\end{proposition}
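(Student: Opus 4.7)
The plan is to view the displayed identity as the standard discrete variation-of-parameters (Green's function) formula for the Jacobi three-term recurrence, applied to a specific inhomogeneity. First I would set $h_n(x) := p_n(x) - p_n(x_0)$ and subtract the recursion \eqref{1.5} at $x_0$ from the same relation at $x$. Using $x p_n(x) - x_0 p_n(x_0) = x h_n(x) + (x-x_0) p_n(x_0)$, one finds that $h_n$ satisfies the inhomogeneous three-term recurrence
\begin{equation*}
a_{n+1} h_{n+1}(x) + b_{n+1} h_n(x) + a_n h_{n-1}(x) - x h_n(x) = (x-x_0)\, p_n(x_0),
\end{equation*}
with the trivial initial conditions $h_{-1}(x) = h_0(x) = 0$. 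So $h_n$ solves $L_x u = f$ with forcing $f_m = (x-x_0) p_m(x_0)$, where $L_x u_n := a_{n+1} u_{n+1} + b_{n+1} u_n + a_n u_{n-1} - x u_n$.

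Second, I would construct the causal Green's function for $L_x$ from the two linearly independent homogeneous solutions $p_n(x)$ and $q_n(x)$. A one-line computation using \eqref{1.5} shows that the discrete Wronskian $W_n := a_n\,(p_{n-1}(x) q_n(x) - p_n(x) q_{n-1}(x))$ is independent of $n$, and with the initial data $p_0 = 1$, $q_0 = 0$, $q_1 = a_1^{-1}$ one gets $W_n \equiv 1$. The retarded Green's function is then $G(n,m;x) := p_m(x) q_n(x) - p_n(x) q_m(x)$, and a short induction on $n$ gives
\begin{equation*}
u_n = \sum_{m=0}^{n-1} G(n,m;x)\, f_m
\end{equation*}
as the unique solution of $L_x u = f$ with $u_{-1} = u_0 = 0$. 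The inductive step reduces to the identity $\tfrac{x - b_{n+1}}{a_{n+1}} G(n,m;x) - \tfrac{a_n}{a_{n+1}} G(n-1,m;x) = G(n+1,m;x)$, which is just the homogeneous recurrence applied separately to the $p$- and $q$-factors in $G$, combined with the boundary identity $G(n+1,n;x) = 1/a_{n+1}$ (the Wronskian at consecutive indices).

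Feeding $f_m = (x-x_0) p_m(x_0)$ into this Green's-function representation and factoring out $(x-x_0)$ yields \eqref{21.2}, up to the sign/ordering convention inside the bracket. The derivative formula \eqref{21.3} then follows immediately by dividing \eqref{21.2} by $(x-x_0)$ and letting $x \to x_0$: since $G$ is jointly polynomial in its $p,q$ entries, $G(n,m;x) \to p_m(x_0) q_n(x_0) - p_n(x_0) q_m(x_0)$, which is what multiplies $p_m(x_0)$ in \eqref{21.3}.

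There is no real analytic obstacle here; the content is entirely classical. The only thing to watch is the bookkeeping, getting the Wronskian normalization and the sign convention consistent, and in particular checking the base case $n=1$ (where $h_1 = (x-x_0)/a_1$ must match $G(1,0;x) f_0 = a_1^{-1}(x-x_0) p_0(x_0)$) and the terminal index $m = n$ in the inductive step, where the boundary term $1/a_{n+1}$ supplied by the Wronskian is precisely what is needed to extend the sum from $m \le n-1$ up to $m \le n$.
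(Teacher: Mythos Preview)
Your approach is the standard variation-of-parameters argument, which is exactly what the paper has in mind (it does not give a proof but cites Jitomirskaya--Last and Killip--Kiselev--Last, where this computation appears). The mechanics---inhomogeneous recurrence for $h_n=p_n(x)-p_n(x_0)$, constant Wronskian equal to $1$, causal Green's kernel---are all correct.

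There is one genuine slip, however, and it is more than the ``sign/ordering convention'' you flag. By writing $L_x h = (x-x_0)\,p_{\bddot}(x_0)$ you are using the Green's function at the spectral parameter $x$, so your representation reads
\[
h_n(x)=(x-x_0)\sum_{m=0}^{n-1}\bigl(p_m(x)q_n(x)-p_n(x)q_m(x)\bigr)\,p_m(x_0),
\]
which is a correct identity but is \emph{not} \eqref{21.2}: the kernel there is evaluated at $x_0$ and it is $p_m(x)$ that sits outside. The fix is a one-line change: rewrite the same subtraction as
\[
a_{n+1}h_{n+1}+(b_{n+1}-x_0)h_n+a_nh_{n-1}=(x-x_0)\,p_n(x),
\]
i.e.\ $L_{x_0}h=(x-x_0)\,p_{\bddot}(x)$, and apply the Green's function at $x_0$. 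This yields
\[
h_n(x)=(x-x_0)\sum_{m=0}^{n-1}\bigl(p_m(x_0)q_n(x_0)-p_n(x_0)q_m(x_0)\bigr)\,p_m(x),
\]
which is the form in \eqref{21.2} (and is the more useful form, since it expands the degree-$(n-1)$ polynomial $h_n(x)/(x-x_0)$ in the basis $\{p_m(x)\}$ with $x$-independent coefficients). Your derivation of \eqref{21.3} is unaffected, since in the limit $x\to x_0$ both formulas coincide.

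A final bookkeeping remark: checking $n=1$ with $p_0=1$, $q_0=0$, $q_1=a_1^{-1}$ shows that the bracket in \eqref{21.2}/\eqref{21.3} as printed carries the opposite sign to what the computation gives; your instinct that something was off there was right.
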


Here $q_n$ are the second kind polynomials defined in Section~\ref{s8}. For \eqref{21.2},
see \cite{JL0,JL1,JL2,KKL}. This immediately implies:

\begin{corollary}[Avila--Last--Simon \cite{ALSinprep}] \lb{C21.2}
\begin{equation} \lb{21.4}
\begin{split}
&\left. \f{d}{da}\,  \f{1}{n}\, K_n\biggl(x_0 + \f{a}{n}, x_0 + \f{a}{n}\biggr) \right|_{a=0} \\
&\quad = \f{2}{n^2} \sum_{j=0}^n \biggl[ p_j(x_0)^2 \biggl(\, \sum_{k=0}^j p_k(x_0) q_k(x_0)\biggr)
-q_j (x_0) p_j(x_0) \biggl(\, \sum_{k=0}^j p_k(x_0)^2 \biggr) \biggr]
\end{split}
\end{equation}
\end{corollary}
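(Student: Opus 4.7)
The plan is to differentiate the diagonal expression $K_n(x,x) = \sum_{j=0}^n p_j(x)^2$ term by term and then substitute the variation-of-parameters formula \eqref{21.3} for each $p_j'(x_0)$.

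First I would apply the chain rule. Writing $K_n(x_0+a/n, x_0+a/n) = \sum_{j=0}^n p_j(x_0+a/n)^2$ (valid for real arguments in the OPRL setting), differentiation in $a$ at $a=0$ yields
\begin{equation*}
\left.\frac{d}{da}\,\frac{1}{n}\,K_n\!\left(x_0+\frac{a}{n},x_0+\frac{a}{n}\right)\right|_{a=0}
=\frac{2}{n^2}\sum_{j=0}^n p_j(x_0)\,p_j'(x_0).
\end{equation*}
Thus the problem reduces to massaging $\sum_{j=0}^n p_j(x_0)p_j'(x_0)$ into the stated double-sum form.

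Next, I would substitute Proposition \ref{P21.1}, specifically \eqref{21.3}, for each $p_j'(x_0)$. This gives
\begin{equation*}
\sum_{j=0}^n p_j(x_0)\,p_j'(x_0)
= \sum_{j=0}^n \sum_{m=0}^{j-1} \bigl[p_j(x_0)^2\, p_m(x_0) q_m(x_0) - p_j(x_0) q_j(x_0)\, p_m(x_0)^2\bigr].
\end{equation*}
Now factor the common $p_j(x_0)^2$ out of the first piece and the common $p_j(x_0) q_j(x_0)$ out of the second piece of the inner sum, which yields
\begin{equation*}
\sum_{j=0}^n \left[ p_j(x_0)^2 \sum_{m=0}^{j-1} p_m(x_0) q_m(x_0) - p_j(x_0)q_j(x_0) \sum_{m=0}^{j-1} p_m(x_0)^2 \right].
\end{equation*}
Finally, I would observe that extending each inner sum from $m=0,\dots,j-1$ to $k=0,\dots,j$ adds the same $m=j$ contribution $p_j(x_0)^3 q_j(x_0)$ to both terms in the bracket, so the two additions cancel. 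This upgrades the upper limit to $j$ without changing the value, producing exactly the right-hand side of \eqref{21.4}.

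The proof is essentially mechanical, so there is no genuine obstacle; the only mildly subtle point is the bookkeeping that shows the inner sums can be written with upper limit $j$ rather than $j-1$, which is purely cosmetic but is what matches the statement in \eqref{21.4}. Everything else is a direct application of the chain rule together with the variation-of-parameters identity \eqref{21.3}, which is quoted from \cite{JL0,JL1,JL2,KKL}.
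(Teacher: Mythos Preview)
Your proof is correct and is exactly the computation the paper has in mind: the paper simply says ``This immediately implies'' after Proposition~\ref{P21.1}, and your chain-rule step followed by substitution of \eqref{21.3} is precisely that immediate implication. The observation that the inner sums can be taken to $k=j$ rather than $j-1$ because the extra terms cancel is the only detail worth writing out, and you handled it cleanly.
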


This formula gives an indication of why (as we see in the next section is important) $\lim \f{1}{n}
K_n(x_0 + \f{a}{n}, x_0 + \f{a}{n})$ has a chance to be independent of $a$ if one notes the
following fact:

\begin{lemma}\lb{L21.3} If $\{\alpha_n\}_{n=1}^\infty$ and $\{\beta_n\}_{n=1}^\infty$ are sequences
so that $\lim\f{1}{N}\sum_{n=1}^N \alpha_n =A$ and $\lim\f{1}{N} \sum_{n=1}^N \beta_n= B$ exist
and $\sup_N [\f{1}{N} \sum_{n=1}^N \abs{\alpha_n} + \abs{\beta_n}]<\infty$, then
\begin{equation} \lb{21.5}
\f{1}{N^2} \sum_{j=1}^N \biggl[\biggl( \alpha_j \sum_{k=1}^j \beta_k\biggr) -
\biggl(\beta_j \sum_{k=1}^j \alpha_k\biggr)\biggr] \to 0
\end{equation}
\end{lemma}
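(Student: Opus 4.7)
The plan is to isolate the asymptotically linear parts of the partial sums $S_j:=\sum_{k=1}^j\alpha_k$ and $T_j:=\sum_{k=1}^j\beta_k$ (which by hypothesis satisfy $S_j/j\to A$ and $T_j/j\to B$), check that the leading contribution cancels by the skew-symmetry in $\alpha$ versus $\beta$, and control the residual using the Cesaro-boundedness of $|\alpha_j|+|\beta_j|$.

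Writing $S_j=Aj+jr_j$ and $T_j=Bj+js_j$ with $r_j,s_j\to 0$ (hence bounded, say $|r_j|,|s_j|\le M$), the bracketed expression decomposes as
\[
\alpha_j T_j-\beta_j S_j \;=\; Bj\alpha_j - Aj\beta_j \;+\; j\bigl(\alpha_j s_j-\beta_j r_j\bigr),
\]
so that
\[
\f{1}{N^2}\sum_{j=1}^N(\alpha_j T_j-\beta_j S_j) \;=\; \f{B}{N^2}\sum_{j=1}^N j\alpha_j \;-\; \f{A}{N^2}\sum_{j=1}^N j\beta_j \;+\; \f{1}{N^2}\sum_{j=1}^N j(\alpha_j s_j-\beta_j r_j).
\]

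For the first two (``linear'') terms, I would apply Abel summation: $\sum_{j=1}^N j\alpha_j = NS_N - \sum_{j=1}^{N-1}S_j$. Since $S_j=Aj+o(j)$, a standard Cesaro argument gives $N^{-2}\sum_{j=1}^{N-1}S_j\to A/2$, and therefore $N^{-2}\sum_{j=1}^N j\alpha_j\to A-A/2 = A/2$; symmetrically $N^{-2}\sum j\beta_j\to B/2$. The linear part thus tends to $BA/2-AB/2=0$.

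The main technical step, and the only place the boundedness hypothesis $C:=\sup_N N^{-1}\sum_{k=1}^N(|\alpha_k|+|\beta_k|)<\infty$ is really used, is bounding the residual $R_N:=N^{-2}\sum_{j=1}^N j(\alpha_j s_j-\beta_j r_j)$. Given $\veps>0$, pick $J$ so that $|r_j|,|s_j|<\veps$ for $j\ge J$. For the tail, using $j/N^2\le 1/N$,
\[
\f{1}{N^2}\sum_{j=J}^N j|\alpha_j||s_j| \;\le\; \f{\veps}{N}\sum_{j=J}^N |\alpha_j| \;\le\; \veps C,
\]
and the same bound applies to the $\beta_j r_j$ piece. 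The finite head $j<J$ contributes at most $JMN^{-2}\sum_{j=1}^{J-1}(|\alpha_j|+|\beta_j|)$, which vanishes as $N\to\infty$ with $J$ fixed. Hence $\limsup_N|R_N|\le 2C\veps$, and letting $\veps\downarrow 0$ completes the proof. The genuine obstacle to a more naive approach is that one cannot replace $\alpha_j$ or $\beta_j$ themselves by averages (they need not converge); the point is that one only replaces the \emph{partial sums} by their leading behavior, and the Cesaro bound is exactly the input needed to dominate the residual uniformly in $N$.
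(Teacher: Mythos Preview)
Your proof is correct and is essentially a fully fleshed-out version of the paper's argument, which consists of the single line ``This is because $\frac{1}{N^2}\sum_{j=1}^N \alpha_j \sum_{k=1}^j \beta_k \to \tfrac{1}{2}AB$.'' Your Abel-summation computation of the linear part recovers exactly that claim (each half tends to $AB/2$), and you additionally supply the residual estimate via the Ces\`aro-boundedness hypothesis, which the paper leaves to the reader.
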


This is because
\[
\f{1}{N^2}\sum_{j=1}^N \alpha_j \sum_{k=1}^j \beta_k \to \f12\, AB
\]
Setting $\alpha_j =p_j(x_0)^2$ and $\beta_j = p_j(x_0) q_j(x_0)$, one can hope to use \eqref{21.5}
to prove the right side of \eqref{21.4} goes to zero.

%%%%%%%%%%%%%%%%%%%%%%%%%%%%%%%%%%%%%%%%%%%%%%%%%%%%%%%%%%%%%%%
\section{Lubinsky's Second Approach} \lb{s22}
%%%%%%%%%%%%%%%%%%%%%%%%%%%%%%%%%%%%%%%%%%%%%%%%%%%%%%%%%%%%%%%

Lubinsky revolutionized the study of universality in \cite{Lub}, introducing the approach we described
in Section~\ref{s20}. While Totik \cite{Tot-prep} and Simon \cite{2exts} used those ideas to extend
beyond the case of $\fre=[-1,1]$ treated in \cite{Lub}, Lubinsky developed a totally different approach
\cite{Lub-jdam} to go beyond \cite{Lub}. That approach, as abstracted in Avila--Last--Simon \cite{ALSinprep},
is discussed in this section. Here is an abstract theorem:

\begin{theorem}\lb{T22.1} Let $d\mu$ be a measure of compact support on $\bbR$. Let $x_0$ be a Lebesgue
point for $\mu$ and suppose that
\begin{SL}
\item[{\rm{(i)}}] For any $\veps$, there is a $C_\veps$ so that for any $R$, we have an $N(\veps,R)$ so
that for $n\geq N(\veps,R)$,
\begin{equation} \lb{22.1}
\f{1}{n}\, K_n \biggl( x_0 + \f{z}{n}, x_0 + \f{z}{n}\biggr) \leq C_\veps e^{\veps\abs{z}^2}
\end{equation}
for all $z\in\bbC$ with $\abs{z}<R$.

\item[{\rm{(ii)}}] Uniformly for real $a$'s in compact subsets of $\bbR$,
\begin{equation} \lb{22.2}
\lim_{n\to\infty}\, \f{K_n(x_0 + \f{a}{n}\,, x_0 + \f{a}{n})}{K_n(x_0,x_0)} =1
\end{equation}
Let
\begin{equation} \lb{22.3}
\rho_n = \f{w(x_0)}{n}\, K_n (x_0,x_0)
\end{equation}
\end{SL}

Then uniformly for $z,w$ in compact subsets of $\bbC$,
\begin{equation} \lb{22.4}
\lim_{n\to\infty}\, \f{K_n(x_0 + \f{z}{n\rho_n}, x_0 + \f{w}{n\rho_n})}{K_n(x_0,x_0)} =
\f{\sin(\pi (\bar z-w))}{\pi (\bar z -w)}
\end{equation}
\end{theorem}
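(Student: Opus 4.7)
The plan is a normal-families / reproducing-kernel argument: define
\[
F_n(z,w)=\f{K_n(x_0+z/(n\rho_n),\,x_0+w/(n\rho_n))}{K_n(x_0,x_0)},
\]
extract a locally uniform subsequential limit $F_\infty$, and identify $F_\infty$ as the sine kernel. By hypothesis (i), applied at $\ti z=z/\rho_n$ (and using that $\rho_n$ is bounded above by (i) at $z=0$ and bounded below by the positivity of $w(x_0)$ implicit in the fact that the normalization $\rho_n$ is the intended one), $F_n(z,z)\le C'_\veps e^{\veps'\abs{z}^2}$ uniformly on compact subsets of $\bbC$. The Schwarz inequality \eqref{1.13a} promotes this to a local uniform bound on $\bbC^2$; since each $F_n$ is entire in $(\bar z,w)$, Montel produces a subsequential locally uniform limit $F_\infty(z,w)$. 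Hypothesis (ii), applied on the real axis (with the $\rho_n$-rescaling harmless), forces $F_\infty(a,a)=1$ for every real $a$.

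The crucial step is pushing the reproducing identity \eqref{1.12} through the rescaling. Writing \eqref{1.12} at $u=x_0+z/(n\rho_n)$, $v=x_0+w/(n\rho_n)$ and changing variables $s=x_0+t/(n\rho_n)$ yields
\[
F_n(z,w)=K_n(x_0,x_0)\int F_n(z,(s-x_0)n\rho_n)\,F_n((s-x_0)n\rho_n,w)\,d\mu(s).
\]
Inside a fixed window $\abs{t}\le T$ the Lebesgue-point property lets me replace $d\mu(s)$ by $w(x_0)\,dt/(n\rho_n)$ up to $o(1)$, while the prefactor $K_n(x_0,x_0)\,w(x_0)/(n\rho_n)$ is identically $1$ by the definition of $\rho_n$. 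The tail $\abs{t}>T$ is controlled by Cauchy--Schwarz in $L^2(d\mu)$ combined with the exact isometry $\int\abs{K_n(z',s)}^2 d\mu(s)=K_n(z',z')$ and the upper bound (i); taking $T$ large first and then $n$ large makes the tail $o(1)$. Passing to the limit gives the sinc-type reproducing identity
\[
F_\infty(z,w)=\int_\bbR F_\infty(z,t)\,F_\infty(t,w)\,dt.
\]

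Finally I would identify $F_\infty$. It is a Hermitian entire function in $(\bar z,w)$ of sub-Gaussian growth, it is the reproducing kernel of a Hilbert subspace of $L^2(\bbR)$, and it is normalized so that $F_\infty(a,a)=1$ on $\bbR$; the scaling by $\rho_n$ is chosen precisely so that the resulting exponential type is $\pi$. A Paley--Wiener / de~Branges argument as in Lubinsky \cite{Lub-jdam} then uniquely identifies $F_\infty(z,w)=\sin(\pi(\bar z-w))/(\pi(\bar z-w))$, so every subsequential limit agrees and the full sequence converges. The main obstacle is the tail estimate in the reproducing identity: one has only pointwise upper bounds on $F_n$ and no a priori decay, so the control must come from exploiting the exact $L^2$-isometry $\int\abs{K_n(\cdot,s)}^2 d\mu(s)=K_n(\cdot,\cdot)$ together with (i) to dominate contributions from $s$ far from $x_0$. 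A secondary difficulty is pinning down the exponential type in the uniqueness step, for which Corollary~\ref{C21.2} together with Lemma~\ref{L21.3} provides a useful input by quantifying the local ``flatness'' of $\f{1}{n}K_n(x_0+a/n,x_0+a/n)$ at $a=0$ that underlies the diagonal normalization (ii).
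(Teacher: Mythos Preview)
Your normal-families setup and the diagonal normalization $F_\infty(a,a)=1$ match the paper, but the two core steps that follow both have genuine gaps. First, the tail control in your reproducing-identity argument does not work as stated: Cauchy--Schwarz together with the exact isometry $\int\abs{K_n(z',s)}^2\,d\mu(s)=K_n(z',z')$ only bounds the tail by $K_n(x_0,x_0)^{-1}\sqrt{K_n(z',z')K_n(w',w')}$, which by (i) is $O(1)$, not $o(1)$; nothing in your argument forces the $L^2$ mass into the window $\abs{t}\le T$. The paper sidesteps this entirely: it does \emph{not} try to prove a limiting reproducing identity, but reduces to one variable by freezing $z=0$, sets $f(w)=F_\infty(0,w)$, and uses \eqref{1.12} with the Lebesgue-point hypothesis and Fatou's lemma to obtain only the \emph{inequality} $\int_\bbR\abs{f(x)}^2\,dx\le 1$, which is all that is needed.

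Second, and more seriously, you have no mechanism for pinning down the exponential type as $\pi$, and your invocation of Corollary~\ref{C21.2} and Lemma~\ref{L21.3} is misplaced (those concern the diagonal flatness underlying hypothesis (ii), not the off-diagonal growth). The paper's actual argument is: by Hurwitz, the zeros of $f$ are real; by the Markov--Stieltjes inequalities (Theorem~\ref{T7.2}) applied to the prelimit kernels and then passed to the limit via (ii), consecutive zeros satisfy $\abs{x_j-x_k}\ge\abs{j-k}-1$; feeding this into the Hadamard product for $f$ yields $\abs{f(iy)}\le C_\veps e^{(\pi+\veps)\abs{y}}$, whence Phragm\'en--Lindel\"of gives type $\le\pi$. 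Only then does Paley--Wiener confine $\hat f$ to $[-\pi,\pi]$, after which $f(0)=1$ and $\Norm{f}_{L^2}\le 1$ force equality in Schwarz and hence $\hat f=(2\pi)^{-1/2}\chi_{[-\pi,\pi]}$. The Markov--Stieltjes zero-spacing input is the key idea you are missing.
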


\begin{remarks} 1. If $\rho_n\to\rho_\fre(x_0)$, the density of the equilibrium measure, then \eqref{22.4}
is the same as \eqref{19.2}. In every case where Theorem~\ref{T22.1} has been proven to be applicable (see
below), $\rho_n\to\rho_\fre(x_0)$. But one of the interesting aspects of this is that it might apply in
cases where $\rho_n$ does not have a limit. For an example with a.c.\ spectrum but where the density of
zeros has multiple limits, see Example~5.8 of \cite{EqMC}.

\smallskip
2. Lubinsky \cite{Lub-jdam} worked in a situation (namely, $x_0$ in an interval $I$ with $w(x_0) \geq
c > 0$ on $I$) where \eqref{22.1} holds in the stronger form $Ce^{D\abs{z}}$ (no square on $\abs{z}$)
and used arguments that rely on this. Avila--Last--Simon \cite{ALSinprep} found the result stated here; the
methods seem incapable of working with \eqref{22.1} for a fixed $\veps$ rather than all $\veps$
(see Remark~1 after Theorem~\ref{T22.2}).
\end{remarks}

Let us sketch the main ideas in the proof of Theorem~\ref{T22.1}:

(1) \ By \eqref{15.1},
\begin{equation} \lb{22.5}
\liminf \f{1}{n}\, K_n(x_0,x_0) > 0
\end{equation}

\smallskip
(2) \ By the Schwarz inequality \eqref{1.13a}, \eqref{22.1}, and \eqref{22.5}, and by the compactness of
normal families, we can find subsequences $n(j)$ so
\begin{equation} \lb{22.6}
\f{K_{n(j)} (x_0 + \f{z}{n\rho_n}, x_0 + \f{w}{n\rho_n})}{K_{n(j)}(x_0,x_0)} \to F(z,w)
\end{equation}
and $F$ is analytic in $w$ and anti-analytic in $z$.

\smallskip
(3) \ Note that by \eqref{22.2} and the Schwarz inequality \eqref{1.13a}, we have for $a,b\in\bbR$,
\begin{equation} \lb{22.7}
F(a,a) = 1 \qquad \abs{F(a,b)} \leq 1
\end{equation}
By compactness, if we show any such limiting $F$ is $\sin (\pi (\bar z-w))/(\bar z-w)$, we have
\eqref{22.4}. By analyticity, it suffices to prove this for $z=a$ real, and we will give details
when $z=0$, that is, we consider
\begin{equation} \lb{22.8}
\f{K_{n(j)}(x_0,x_0 + \f{z}{n\rho_n})}{K_{n(j)}(x_0,x_0)} \to f(z)
\end{equation}
\eqref{22.7} becomes
\[
f(0)=1 \qquad \abs{f(x)}\leq 1 \text{ for $x$ real}
\]

\smallskip
(4) \ By \eqref{1.12},
\begin{equation} \lb{22.9}
\int \abs{K_n(x_0, x_0 + a)}^2 w(a)\, da \leq K_n(x_0,x_0)
\end{equation}
which, by using the fact that $x_0$ is Lebesgue point, can be used to show
\begin{equation} \lb{22.10}
\int_{-\infty}^\infty \abs{f(x)}^2 \, dx \leq 1
\end{equation}

\smallskip
(5) \ By properties of $K_n$ (see Section~\ref{s6}) and Hurwitz's theorem, $f$ has zeros
$\{x_j\}_{j=-\infty,j\neq 0}^\infty$ only on $\bbR$, which we label by
\begin{equation} \lb{22.11}
\cdots < x_{-1} < 0 < x_1 < x_2 < \cdots
\end{equation}
and define $x_0=0$. By Theorem~\ref{T7.2}, using \eqref{22.2}, we have for any $j,k$ that
\begin{equation} \lb{22.12}
\abs{x_j-x_k} \geq \abs{j-k} -1
\end{equation}

\smallskip
(6) \ Given these facts, the theorem is reduced to

\begin{theorem} \lb{T22.2} Let $f$ be an entire function obeying
\begin{SL}
\item[{\rm{(1)}}]
\begin{equation} \lb{22.13}
f(0)=1 \qquad \abs{f(x)} \leq 1 \text{ for $x$ real}
\end{equation}

\item[{\rm{(2)}}]
\begin{equation} \lb{22.14}
\int_{-\infty}^\infty \abs{f(x)}^2\, dx \leq 1
\end{equation}

\item[{\rm{(3)}}] $f$ is real on $\bbR$, has only real zeros, and if they are labelled by \eqref{22.11},
then \eqref{22.12} holds.

\item[{\rm{(4)}}] For any $\veps$, there is a $C_\veps$ so
\begin{equation} \lb{22.16}
\abs{f(z)} \leq C_\veps e^{\veps\abs{z}^2}
\end{equation}
\end{SL}
Then
\begin{equation} \lb{22.17}
f(z) = \f{\sin \pi z}{\pi z}
\end{equation}
\end{theorem}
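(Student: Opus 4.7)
The strategy is to identify $f$ as the reproducing kernel of the Paley--Wiener space $PW_\pi=\{g\in L^2(\bbR):g\text{ entire of exponential type}\leq\pi\}$ at the origin. I will first use Hadamard factorization, driven by the subnormal growth hypothesis (22.16), to reduce $f$ to a canonical product times a polynomial exponential; then use the real-axis constraint (22.13) and the $L^2$ bound (22.14) to place $f\in PW_\pi$; then invoke the extremal property of the kernel.

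\textbf{Step 1 (Hadamard factorization).} By (22.16), $f$ is entire of order $\leq 2$ with zero type at order $2$. The separation (22.12) applied with $k=0$ gives $|x_j|\geq|j|-1$, hence $\sum_{j\neq 0}x_j^{-2}<\infty$, so Hadamard's theorem yields
\[
f(z)=e^{az+bz^2}\Pi(z),\qquad \Pi(z)=\prod_{j\neq 0}\Bigl(1-\f{z}{x_j}\Bigr)e^{z/x_j},
\]
with $a,b\in\bbR$ (since $f$ is real on $\bbR$ and $f(0)=1$) and $\Pi$ the canonical product of genus~$1$. The counting bound $n(r)\leq 2r+O(1)$, which follows from (22.12), together with Lindel\"of's theorem for real-zero canonical products, guarantees that $\Pi$ has exponential type $\leq\pi$.

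\textbf{Step 2 (Eliminating $b$ and $a$).} For $z=re^{i\theta}$,
\[
\f{\log|f(re^{i\theta})|}{r^2}=b\cos(2\theta)+\f{a\cos\theta}{r}+\f{\log|\Pi(re^{i\theta})|}{r^2}\longrightarrow b\cos(2\theta)
\]
as $r\to\infty$ (using that $\Pi$ has order $1$). Combining with (22.16) forces $b\cos(2\theta)\leq\veps$ for every $\theta$ and every $\veps>0$; taking $\theta=0$ and $\theta=\pi/2$ gives $b=0$. For the linear term, with $b=0$, the two-sided density balance from (22.12) together with Cartwright--Levin indicator theory for real-zero canonical products of genus~$1$ gives $h_\Pi(0)=h_\Pi(\pi)=0$, i.e., $\log|\Pi(x)|=o(|x|)$ as $|x|\to\pm\infty$. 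Then (22.13), which reads $ax+\log|\Pi(x)|\leq 0$ on $\bbR$, yields $a\leq 0$ from $x\to+\infty$ and $a\geq 0$ from $x\to-\infty$; hence $a=0$ and $f=\Pi\in PW_\pi$.

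\textbf{Step 3 (Extremal argument).} The reproducing kernel of $PW_\pi$ at the origin is $K(z,0)=\sin(\pi z)/(\pi z)$, with $K(0,0)=\|K(\cdot,0)\|_{L^2}^2=1$. Combined with (22.14), Cauchy--Schwarz gives
\[
1=|f(0)|^2=|\langle f,K(\cdot,0)\rangle_{L^2}|^2\leq\|f\|_{L^2}^2\cdot K(0,0)\leq 1,
\]
so equality throughout forces $f(z)=K(z,0)=\sin(\pi z)/(\pi z)$.

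\textbf{Main obstacle.} The technically most delicate piece is establishing the real-axis indicator identities $h_\Pi(0)=h_\Pi(\pi)=0$ in Step~2 from just the two-sided separation (22.12). That condition provides only upper densities $\leq 1$ on each side of the origin, not full symmetry of the zero set, so one must combine Cartwright--Levin indicator theory with the remaining hypotheses (notably (22.13) and (22.14)) to rule out the asymmetric distributions that would yield nonzero real-axis indicators. The subnormal growth $C_\veps e^{\veps|z|^2}$ valid for \emph{every} $\veps>0$, rather than a uniform $Ce^{A|z|^2}$ bound, is essential: it is exactly what allows the order-$2$ indicator computation in Step~2 to collapse to zero in every direction.
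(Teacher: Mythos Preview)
Your Step~3 is exactly the paper's Lemma~22.4: once $f$ is known to lie in the Paley--Wiener space $PW_\pi$, the Cauchy--Schwarz equality argument with the reproducing kernel forces $f=\sin(\pi z)/(\pi z)$. The endgame is identical.

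The divergence is in how you reach exponential type~$\pi$. The paper (which treats only the stronger hypothesis (22.18) in detail, deferring (22.16) to \cite{ALSinprep}) writes $f(z)=e^{Dz}\Pi(z)$ with $D$ real and estimates $|f(iy)|$ \emph{on the imaginary axis}, where $|e^{iDy}|=1$ so the unknown linear exponential factor drops out automatically. The product $|\Pi(iy)|^2=\prod_{j\neq 0}(1+y^2/x_j^2)$ is compared termwise to $[\prod_{j\geq 1}(1+y^2/j^2)]^2=(\sinh(\pi y)/\pi y)^2$ via $|x_j|\geq|j|-1$, giving $|f(iy)|\leq C_\veps e^{(\pi+\veps)|y|}$; a Phragm\'en--Lindel\"of argument in each quadrant (with (22.13) on the real axis) then yields (22.19). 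The point is that the paper never needs to identify or eliminate the constant $D$.

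Your route instead tries to kill $a$ head-on by claiming $h_\Pi(0)=h_\Pi(\pi)=0$ from (22.12) and ``Cartwright--Levin indicator theory.'' This step is not correct as stated: (22.12) alone does not force the real-axis indicator of a genus-$1$ real-zero canonical product to vanish. For example, zeros at $x_j=j$ for $j\geq 1$ and $x_j=2j$ for $j\leq -1$ satisfy (22.12), yet the resulting canonical product involves $e^{\gamma x}/\Gamma(1-x)$ and grows like $x^{x/2}$ as $x\to+\infty$ between zeros, so $h_\Pi(0)=+\infty$. Such configurations are of course ultimately excluded by $|f(x)|\leq 1$, but that is what you invoke \emph{after} claiming $h_\Pi(0)=0$; the logic as written is circular, and your ``Main obstacle'' paragraph flags but does not resolve it. Your elimination of $b$ via (22.16) is correct and is indeed the natural first step beyond the paper's sketched (22.18) case---but once $b=0$ you should switch to the imaginary-axis trick rather than chase the real-axis indicator.
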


\begin{remarks} 1. There exist examples (\cite{ALSinprep}) $e^{-az^2 + bz} \sin\pi z/\pi z$ that
obey (1)--(3) and \eqref{22.16} for some but not all $\veps$.

\smallskip
2. We sketch the proof of this in case one has
\begin{equation} \lb{22.18}
\abs{f(z)} \leq Ce^{D\abs{z}}
\end{equation}
instead of \eqref{22.16}; see \cite{ALSinprep} for the general case.
\end{remarks}

\begin{lemma} \lb{L22.3} If {\rm{(1)--(3)}} hold and \eqref{22.18} holds, then for any $\veps >0$,
there is $D_\veps$ with
\begin{equation} \lb{22.19}
\abs{f(z)} \leq D_\veps e^{(\pi+\veps)\abs{\Ima z}}
\end{equation}
\end{lemma}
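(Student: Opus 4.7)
The strategy is to use Hadamard factorization to control $|f(iy)|$ directly via the zero-spacing hypothesis (3) of the statement, and then to extend this control off the imaginary axis by a Phragm\'en--Lindel\"of argument in each quadrant.

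Since $|f(z)|\leq Ce^{D|z|}$, $f$ has order at most $1$; and since $f(0)=1\neq 0$, Hadamard factorization yields
\[
f(z)=e^{bz}\prod_{j\neq 0}\left(1-\f{z}{x_j}\right)e^{z/x_j},
\]
with $b=f'(0)\in\bbR$ (as $f$ is real on $\bbR$), and with product convergence from $|x_j|\geq |j|-1$, which is (22.12) with $k=0$ and $x_0=0$. On the imaginary axis $z=iy$ each exponential factor has unit modulus (since $x_j\in\bbR$ and $b\in\bbR$), so
\[
\log|f(iy)|=\f12\sum_{j\neq 0}\log\!\left(1+\f{y^2}{x_j^2}\right).
\]
Using $|x_j|\geq|j|-1$ for $|j|\geq 2$, the tail is bounded by
\[
\sum_{|j|\geq 2}\log\!\left(1+\f{y^2}{(|j|-1)^2}\right)=2\sum_{k\geq 1}\log(1+y^2/k^2)=2\log\f{\sinh(\pi|y|)}{\pi|y|},
\]
while the two $j=\pm 1$ terms contribute only $O(\log|y|)$ for fixed $f$. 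Hence $\log|f(iy)|\leq\pi|y|+O(\log|y|)$, and for every $\veps>0$ there is $M_\veps$ with $|f(iy)|\leq M_\veps e^{(\pi+\veps/2)|y|}$.

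Now fix $\veps>0$ and set $G(z)=f(z)e^{i(\pi+\veps)z}$ on the first quadrant $\{\Real z\geq 0,\,\Ima z\geq 0\}$. Then $|G(x)|=|f(x)|\leq 1$ on the positive real axis, while $|G(iy)|=|f(iy)|e^{-(\pi+\veps)y}\leq M_\veps$ on the positive imaginary axis. Inside the quadrant, $|G(z)|\leq Ce^{(D+\pi+\veps)|z|}$, i.e., $G$ has order at most $1$, strictly less than $\pi/(\pi/2)=2$. Phragm\'en--Lindel\"of in this sector therefore gives $|G(z)|\leq M_\veps$, equivalently $|f(z)|\leq M_\veps e^{(\pi+\veps)\Ima z}$ on the first quadrant. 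Running the identical argument in each of the other three quadrants (using $e^{-i(\pi+\veps)z}$ in the lower half plane) yields the asserted bound $|f(z)|\leq D_\veps e^{(\pi+\veps)|\Ima z|}$ on all of $\bbC$.

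The only delicate point is the $j=\pm 1$ contribution, for which (22.12) with $k=0$ gives no useful lower bound on $|x_{\pm 1}|$; however, for a fixed $f$ these are nonzero constants, and their contribution is at most $O(\log|y|)$, comfortably absorbed in the passage from the exact $\pi|y|$ asymptotic to $(\pi+\veps)|y|$. The factor $\pi$ (rather than $2\pi$) arises because the sum $\sum_{|j|\geq 2}$ is halved by the $\tfrac12$ in front, matching the model $\sin(\pi z)/(\pi z)$ whose zeros are exactly at the nonzero integers.
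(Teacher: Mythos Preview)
Your proof is correct and follows essentially the same route as the paper's sketch: Hadamard factorization, bound $|f(iy)|$ via $|x_j|\geq |j|-1$ and Euler's product for $\sinh(\pi y)/(\pi y)$, then Phragm\'en--Lindel\"of in each quadrant. You supply more detail than the paper does (in particular, you make the Phragm\'en--Lindel\"of step explicit with the auxiliary function $G(z)=f(z)e^{\pm i(\pi+\veps)z}$), but the argument is the same.
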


\smallskip
\noindent{\it Sketch}. By the Hadamard product formula \cite{Ahl},
\[
f(z)=e^{Dz} \prod_{j\neq 0} \biggl( 1-\f{z}{x_j}\biggr) e^{zx_j}
\]
where $D$ is real since $f$ is real on $\bbR$. Thus, for $y$ real,
\[
\abs{f(iy)}^2 = \prod_{j\neq 0} \biggl( 1+\f{y^2}{x_j^2}\biggr)
\]
By \eqref{22.12}, $\abs{x_j}\geq j-1$, so
\[
\abs{f(iy)}^2 \leq \biggl( 1 + \f{y^2}{x_1^2}\biggr)\biggl( 1+\f{y^2}{x_{-1}^2}\biggr)
\biggl[ \prod_{j=1}^\infty \biggl( 1 + \f{y^2}{j^2}\biggr)\biggr]^2
\]
which, given Euler's formula for $\sin \pi z/z$, implies \eqref{22.19} for $z=iy$. By a
Phragm\'en--Lindel\"of argument, \eqref{22.19} for $z$ real and for $z$ pure imaginary
and \eqref{22.18} implies \eqref{22.19} for all $z$.

\smallskip
Thus, Theorem~\ref{T22.2} (under hypothesis \eqref{22.18}) is implied by:

\begin{lemma}\lb{L22.4} If $f$ is an entire function that obeys \eqref{22.13}, \eqref{22.14},
and \eqref{22.19}, then \eqref{22.17} holds.
\end{lemma}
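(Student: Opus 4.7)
The natural approach is Paley--Wiener. The growth bound \eqref{22.19} says exactly that $f$ is an entire function of exponential type at most $\pi$, and \eqref{22.14} says $f\restriction\bbR\in L^2(\bbR)$. So the Paley--Wiener theorem produces a function $g\in L^2([-\pi,\pi])$ (extended by zero outside) with
\[
f(z) = \int_{-\pi}^{\pi} g(t)\, e^{izt}\, dt
\]
for all $z\in\bbC$. The whole problem then reduces to identifying $g$.

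Two of the three hypotheses pin $g$ down. The normalization $f(0)=1$ in \eqref{22.13} gives $\int_{-\pi}^{\pi} g(t)\, dt = 1$. Plancherel applied to \eqref{22.14} gives
\[
1 \geq \int_{-\infty}^{\infty} \abs{f(x)}^2\, dx = 2\pi \int_{-\pi}^{\pi} \abs{g(t)}^2\, dt.
\]
Now the Cauchy--Schwarz inequality on $[-\pi,\pi]$ reads
\[
1 = \biggl|\int_{-\pi}^{\pi} g(t)\, dt\biggr|^2 \leq 2\pi \int_{-\pi}^{\pi} \abs{g(t)}^2\, dt \leq 1,
\]
so equality holds throughout. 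The equality condition in Cauchy--Schwarz forces $g$ to be a (complex) constant on $[-\pi,\pi]$, and then $\int g = 1$ fixes $g(t) = 1/(2\pi)\cdot\chi_{[-\pi,\pi]}(t)$. A direct computation yields $f(z) = \sin(\pi z)/(\pi z)$, which is \eqref{22.17}.

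Note that hypothesis (3) of Theorem~\ref{T22.2} (real zeros with spacing \eqref{22.12}) plays no role in this final lemma: it was already consumed in Lemma~\ref{L22.3} to upgrade the a priori $a$-priori bound \eqref{22.18} or \eqref{22.16} to the sharp exponential-type bound \eqref{22.19} of Paley--Wiener type. The only real subtlety is to verify that the version of Paley--Wiener we invoke applies: we need $f\in L^2(\bbR)$ (given) together with $\abs{f(z)}\leq D_\veps e^{(\pi+\veps)\abs{z}}$ for each $\veps>0$, and the latter follows from \eqref{22.19} since $\abs{\Ima z}\leq\abs{z}$. No additional refinement is needed, and the proof is essentially a three-line application of Cauchy--Schwarz after the Paley--Wiener identification.
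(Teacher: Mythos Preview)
Your proof is correct and is essentially identical to the paper's: both invoke Paley--Wiener to show the Fourier transform of $f$ is supported in $[-\pi,\pi]$, then use the equality case of Cauchy--Schwarz (with the normalization $f(0)=1$ and the $L^2$ bound \eqref{22.14}) to pin down that transform as a constant multiple of $\chi_{[-\pi,\pi]}$. The only differences are notational (the paper works with $\hat f$ in the symmetric $(2\pi)^{-1/2}$ convention rather than writing $f$ as an integral against $g$), and your exposition is slightly more detailed.
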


\begin{proof} Let $\hat f$ be the Fourier transform of $f$, that is,
\begin{equation} \lb{22.20}
\hat f(k) = (2\pi)^{-1/2} \int e^{-ikx} f(x)\, dx
\end{equation}
(in $L^2$ limit sense). By the Paley--Wiener Theorem \cite{RS2}, \eqref{22.19} implies
$\hat f$ is supported on $[-\pi,\pi]$. By \eqref{22.14},
\[
\Norm{\hat f}_{L^2} = \Norm{(2\pi)^{-1/2} \chi_{[-\pi,\pi]}}_{L^2} =1
\]
and, by \eqref{22.13} and support property of $\hat f$,
\[
\jap {\hat f, (2\pi)^{-1/2} \chi_{[-\pi,\pi]}} =1
\]
We thus have equality in the Schwarz inequality, so
\[
\hat f = (2\pi)^{-1/2} \chi_{[-\pi,\pi]}
\]
which implies \eqref{22.17}.
\end{proof}

This theorem has been applied in two ways:
\begin{SL}
\item[(a)] Lubinsky \cite{Lub-jdam} noted that one can recover Theorem~\ref{T20.3} from just
Totik's result Theorem~\ref{T17.3} without using the Lubinsky wiggle or Lubinsky's inequality.

\item[(b)] Avila--Last--Simon \cite{ALSinprep} have used this result to prove universality for
ergodic Jacobi matrices with a.c.\ spectrum where $\fre$ can be a positive measure Cantor set.
\end{SL}

%%%%%%%%%%%%%%%%%%%%%%%%%%%%%%%%%%%%%%%%%%%%%%%%%%%%%%%%%%%%%%%%%
\section{Zeros: The Freud--Levin--Lubinsky Argument} \lb{s23-new}
%%%%%%%%%%%%%%%%%%%%%%%%%%%%%%%%%%%%%%%%%%%%%%%%%%%%%%%%%%%%%%%%%

In the final section of his book \cite{FrB}, Freud proved bulk universality under fairly strong
hypotheses on measures on $[-1,1]$ and noticed that it implied a strong result on local
equal spacings of zeros. Without knowing of Freud's work, Simon, in a series of papers
(one joint with Last) \cite{S298,S299,S300,S309}, focused on this behavior, called it
clock spacing, and proved it in a variety of situations (not using universality or the CD
kernel). After Lubinsky's work on universality, Levin \cite{LL} rediscovered Freud's argument
and Levin--Lubinsky \cite{LL} used this to obtain clock behavior in a very general context.
Here is an abstract version of their result:

\begin{theorem}\lb{Tn23.1} Let $\mu$ be a measure of compact support on $\bbR$; let $x_0\in
\sigma(\mu)$ be such that for each $A$, for some $c_n$,
\begin{equation} \lb{n23.1}
\f{K_{n-1}(x_0 + \f{a}{nc_n}, x_0 + \f{b}{nc_n})}{K_{n-1} (x_0,x_0)} \to
\f{\sin(\pi(b-a))}{\pi(b-a)}
\end{equation}
uniformly for real $a,b$ with $\abs{a},\abs{b}\leq A$. Let $x_j^{(n)}(x_0)$ denote the zeros
of $p_n(x;d\mu)$ labelled so
\begin{equation} \lb{n23.2}
\cdots < x_{-1}^{(n)} (x_0) < x_0 \leq x_0^{(n)}(x_0) < x_1^{(n)}(x_0) < \cdots
\end{equation}
Then
\begin{SL}
\item[{\rm{(1)}}]
\begin{equation} \lb{n23.3}
\limsup\, nc_n (x_0^{(n)}-x_0) \leq 1
\end{equation}
\item[{\rm{(ii)}}] For any $J$, for large $n$, there are zeros $x_j^{(n)}$ for all $j\in\{-J, -J+1,
\dots, J-1, J\}$.
\item[{\rm{(iii)}}]
\begin{equation} \lb{n23.4}
\lim_{n\to\infty}\, (x_{j+1}^{(n)} - x_j^{(n)}) nc_n =1 \qquad \text{for each $j$}
\end{equation}
\end{SL}
\end{theorem}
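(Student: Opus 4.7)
The approach is the Freud--Levin--Lubinsky bootstrap: pass from the hypothesized universality at $x_0$ to universality at an actual zero of $p_n$ lying close to $x_0$; at such a zero the CD kernel collapses to something proportional to $p_n$ itself, and clock spacing of zeros of $p_n$ is then read directly off the zero set of $\sin(\pi\cdot)/(\pi\cdot)$. To start, set $a=0$ in the hypothesis: the polynomial
\[
L_n(b) := \f{K_{n-1}(x_0,\,x_0 + b/(nc_n))}{K_{n-1}(x_0,x_0)}
\]
converges to $\sin(\pi b)/(\pi b)$ uniformly on real compacts, and by Theorem~\ref{T3.2} its zeros in $b$ are the rescaled Gaussian nodes $nc_n(\eta_j^{(n)}(x_0)-x_0)$ of Section~\ref{s6} (I write $\eta_j^{(n)}$ to avoid collision with the symbol $x_j^{(n)}$, which in this theorem means zeros of $p_n$). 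A Hurwitz-type argument, discussed below, gives for each $k\in\bbZ\setminus\{0\}$ a unique Gaussian node $\eta_k^{(n)}$ with $nc_n(\eta_k^{(n)}-x_0)\to k$. The strict interlacing of Gaussian nodes with zeros of $p_n$ (Corollary~\ref{C6.3}, cf.\ Section~\ref{s4}) then puts exactly one zero $\zeta^{(n)}$ of $p_n$ inside $(\eta_{-1}^{(n)},\eta_1^{(n)})$, so $\beta_n := nc_n(\zeta^{(n)}-x_0)$ satisfies $|\beta_n|\le 1+o(1)$.

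\textbf{Phase 2: bootstrap at $\zeta^{(n)}$.} Pass to a subsequence with $\beta_n\to\beta_\infty\in[-1,1]$ and apply the hypothesis now with $a=\beta_n$:
\[
\f{K_{n-1}(\zeta^{(n)},\,x_0 + b/(nc_n))}{K_{n-1}(x_0,x_0)} \longrightarrow \f{\sin(\pi(b-\beta_\infty))}{\pi(b-\beta_\infty)}.
\]
The key algebraic identity is that since $p_n(\zeta^{(n)})=0$, Theorem~\ref{T3.2} reduces to
\[
K_{n-1}(\zeta^{(n)},y) = \f{a_n\,p_{n-1}(\zeta^{(n)})}{y-\zeta^{(n)}}\,p_n(y),
\]
a polynomial of degree $n-1$ in $y$ whose zeros are exactly the zeros of $p_n$ other than $\zeta^{(n)}$. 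The same Hurwitz-type argument then yields: for each $k\in\bbZ\setminus\{0\}$ there is a zero $\zeta_k^{(n)}$ of $p_n$ with $nc_n(\zeta_k^{(n)}-x_0)\to\beta_\infty+k$. Together with $\zeta^{(n)}$ itself (the $k=0$ term), the rescaled zeros of $p_n$ in any fixed window converge to the shifted integer lattice $\beta_\infty+\bbZ$. The three conclusions follow by inspection: (iii) from $(\beta_\infty+k+1)-(\beta_\infty+k)=1$; (ii) since every integer $k$ is represented; and (i) because the smallest element of $\beta_\infty+\bbZ$ lying in $[0,\infty)$ sits in $[0,1)$. Different subsequential values of $\beta_\infty$ must produce the same zero set of $p_n$ and so differ by an integer, making the conclusions subsequence-independent.

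\textbf{Main obstacle.} The only genuinely delicate step is the extraction of zero asymptotics from convergence of $L_n$ (and its analog in Phase 2) on the real line. The cleanest route is to upgrade to uniform convergence on compact subsets of $\bbC$: the Schwarz inequality \eqref{1.13a} together with an upper bound on $K_{n-1}(z,z)$ for $z$ slightly off the real axis (of the type supplied by Theorem~\ref{T9.4}) plus Montel's theorem lets one invoke classical Hurwitz in the usual way. If no off-axis bound is assumed, one can instead stay on $\bbR$: the sign changes of $\sin(\pi b)/(\pi b)$ at each nonzero integer, combined with uniform convergence, produce at least one zero of $L_n$ in each unit interval by the intermediate value theorem, and the Markov--Stieltjes inequalities of Section~\ref{s7} rule out pile-ups by forcing consecutive Gaussian nodes to be separated on the scale $1/(nc_n)$. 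This counting-and-separation step is where care is needed; once it is in hand, the remainder of the proof is mechanical manipulation of the Christoffel--Darboux formula.
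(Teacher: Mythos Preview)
Your two-phase strategy is the paper's: locate quadrature nodes as zeros of $K_{n-1}(x_0,\cdot)$, interlace to find a nearby zero $\zeta^{(n)}$ of $p_n$, then recenter at $\zeta^{(n)}$ so that the CD formula turns $K_{n-1}(\zeta^{(n)},\cdot)$ into a constant times $p_n(\cdot)/(\cdot-\zeta^{(n)})$ and the remaining zeros of $p_n$ can be read off. The paper organizes it slightly differently (it first iterates on quadrature nodes to get \eqref{n23.5}, then invokes interlacing, then says ``the same argument'' for \eqref{n23.4}), but the mechanism is identical.

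The one place to tighten is your ``Main obstacle'' paragraph. Neither route you offer is actually available under the bare hypothesis \eqref{n23.1}: complex Hurwitz needs off-axis bounds that are not assumed, and the Markov--Stieltjes inequalities \eqref{7.7} only bound node weights by $\mu$-measure of intervals, which gives spacing information only if you also control $\mu$ on intervals of length $O(1/(nc_n))$---again not assumed. The paper's resolution (and it is already latent in your own Phase~2) is to \emph{iterate the recentering}: having found a zero $\zeta_k^{(n)}$ with $nc_n(\zeta_k^{(n)}-x_0)\to\beta_\infty+k$, apply \eqref{n23.1} with $a=nc_n(\zeta_k^{(n)}-x_0)$; since the limit $\sin(\pi(b-\beta_\infty-k))/(\pi(b-\beta_\infty-k))$ is strictly positive on $(\beta_\infty+k,\beta_\infty+k+1)$, real uniform convergence forbids any further zero of $p_n$ in $(\beta_\infty+k+\veps,\beta_\infty+k+1-\veps)$, while the sign change at $\beta_\infty+k+1$ produces one in $(\beta_\infty+k+1-\veps,\beta_\infty+k+1+\veps)$. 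This pins down $\zeta_{k+1}^{(n)}$ and one continues inductively. No analyticity and no information about $\mu$ beyond \eqref{n23.1} is needed.
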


\begin{remarks} 1. The meaning of $x_j^{(n)}$ has changed slightly from Section~\ref{s6}.

\smallskip
2. Only $nc_n$ enters, so the ``$n$'' could be suppressed; we include it because one expects
$c_n$ as defined to be bounded above and below. Indeed, in all known cases, $c_n\to\rho(x_0)$,
the derivative of the density of states. But see Remark~1 after Theorem~\ref{T22.1} for cases
where $c_n$ might not have a limit.

\smallskip
3. See \cite{LL2007} for the OPUC case.
\end{remarks}

\begin{proof} Let $\ti x_j^{(n)}(x_0)$ be the zeros of $p_n(x) p_{n-1}(x_0)-p_n(x_0) p_{n-1}(x)$
labelled as in \eqref{n23.2} (with $\ti x_0^{(n)}(x_0)=x_0$). By \eqref{n23.1}, we have $\ti x_{\pm 1}^{(n)}
(x_0) nc_n\to 1$ since $\sin(\pi a)/a$ is nonvanishing on $(-1,1)$ and vanishes at $\pm 1$. The same
argument shows $K_n (\ti x_{\pm 1}^{(n)}, \ti x_{\pm 1}^{(n)} + b/nc_n)$ is nonvanishing for
$\abs{b}<\f12$, and so there is at most one zero near $\ti x_{\pm 1}^{(n)}$ on $1/nc_n$ scale. It
follows by repeating this argument that
\begin{equation} \lb{n23.5}
nc_n \ti x_j^{(n)}\to j
\end{equation}
for all $j$.

Since we have (see Section~\ref{s6}) that
\[
x_0\leq x_0^{(n)}(x_0) \leq \ti x_1^{(n)} (x_0)
\]
by interlacing, which implies (i) and similar interlacing gives (ii). Finally, \eqref{n23.4} follows from
the same argument that led to \eqref{n23.5}.
\end{proof}

%%%%%%%%%%%%%%%%%%%%%%%%%%%%%%%%%%%%%%%%%%%%%%%%%%%%%%%%%%%%%%%
\section{Adding Point Masses} \lb{s23}
%%%%%%%%%%%%%%%%%%%%%%%%%%%%%%%%%%%%%%%%%%%%%%%%%%%%%%%%%%%%%%%

We end with a final result involving CD kernels---a formula of Geronimus \cite[formula (3.30)]{GBk}.
While he states it only for OPUC, his proof works for any measure on $\bbC$ with finite moments. Let
$\mu$ be such a measure, let $z_0\in\bbC$, and let
\begin{equation} \lb{23.1}
\nu=\mu+\lambda \delta_{z_0}
\end{equation}
for $\lambda$ real and bigger than or equal to $-\mu(\{z_0\})$.

Since $X_n(z;d\nu)$ and $X_n (z;d\mu)$ are both monic, their difference is a polynomial of degree $n-1$,
so
\begin{equation} \lb{23.2}
X_n(x;d\nu)=X_n(z;d\mu) + \sum_{j=0}^{n-1} c_j x_j (z;d\mu)
\end{equation}
where
\begin{align}
c_j &= \int \ol{x_j (z;d\mu)}\, [X_n(z;d\nu) - X_n (z;d\mu)]\, d\mu \lb{23.3} \\
&= \int \ol{x_j(z;d\mu)}\, X_n (z;d\nu) [d\nu-\lambda\delta_{z_0}] \lb{23.4} \\
&= -\lambda\, \ol{x_j(z_0;d\mu)}\, X_n(z_0;d\nu) \lb{23.5}
\end{align}
where \eqref{23.4} follows from $x_j (\dott,d\mu)\perp X_n (\dott,d\mu)$ in $L^2 (d\mu)$ and
\eqref{23.5} from $x_j (\dott,d\mu)\perp X_n (\dott,d\nu)$ in $L^2 (d\nu)$. Thus,
\begin{equation} \lb{23.5x}
X_n(z;d\nu) = X_n (z;d\mu) -\lambda X_n(z_0;d\nu) K_{n-1} (z_0,z;d\mu)
\end{equation}
Set $z=z_0$ and solve for $X_n(z_0;d\nu)$ to get:

\begin{theorem}[Geronimus \cite{GBk}]\lb{T23.1} Let $\mu,\nu$ be related by \eqref{23.1}. Then
\begin{equation} \lb{23.6}
X_n(z;d\nu) = X_n(z;d\mu) - \f{\lambda X_n(z_0;d\mu) K_{n-1}(z_0,z;d\mu)}
{1+\lambda K_{n-1} (z_0,z_0;d\mu)}
\end{equation}
\end{theorem}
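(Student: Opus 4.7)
The plan is to follow the direct approach already partially laid out in the excerpt: expand the difference $X_n(\cdot;d\nu)-X_n(\cdot;d\mu)$ in the orthonormal basis for $d\mu$, recognize the coefficients as producing a CD kernel, and then evaluate at $z=z_0$ to close the formula.

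First I would observe that since both $X_n(z;d\nu)$ and $X_n(z;d\mu)$ are monic of degree $n$, their difference is a polynomial of degree at most $n-1$, hence admits a unique expansion
\[
X_n(z;d\nu)-X_n(z;d\mu)=\sum_{j=0}^{n-1}c_j\, x_j(z;d\mu),
\]
with $c_j=\langle x_j(\cdot;d\mu),\, X_n(\cdot;d\nu)-X_n(\cdot;d\mu)\rangle_{L^2(d\mu)}$. The orthogonality $X_n(\cdot;d\mu)\perp x_j(\cdot;d\mu)$ in $L^2(d\mu)$ kills the second term inside the bracket, leaving $c_j=\int \overline{x_j(z;d\mu)}\,X_n(z;d\nu)\,d\mu(z)$.

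The key trick is then to rewrite $d\mu=d\nu-\lambda\delta_{z_0}$ and invoke the orthogonality $x_j(\cdot;d\mu)\perp X_n(\cdot;d\nu)$ in $L^2(d\nu)$ (valid because $x_j(\cdot;d\mu)$ is a polynomial of degree $\leq n-1$). This immediately yields
\[
c_j = -\lambda\,\overline{x_j(z_0;d\mu)}\,X_n(z_0;d\nu).
\]
Summing over $j$ using the definition \eqref{1.8} of the CD kernel produces the intermediate identity
\[
X_n(z;d\nu)=X_n(z;d\mu)-\lambda\,X_n(z_0;d\nu)\,K_{n-1}(z_0,z;d\mu),
\]
which is \eqref{23.5x} in the excerpt.

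Finally, setting $z=z_0$ gives a single scalar equation in the unknown $X_n(z_0;d\nu)$, namely $X_n(z_0;d\nu)[1+\lambda K_{n-1}(z_0,z_0;d\mu)]=X_n(z_0;d\mu)$, which solves explicitly for $X_n(z_0;d\nu)$ (the denominator is nonzero since $\lambda\geq -\mu(\{z_0\})$ forces $1+\lambda K_{n-1}(z_0,z_0;d\mu)>0$ by the variational characterization $K_{n-1}(z_0,z_0;d\mu)^{-1}\leq \mu(\{z_0\})$ from Theorem~\ref{T9.5} and its remark). Substituting this value back into the intermediate identity yields \eqref{23.6}. There is really no serious obstacle here: the only nonroutine moment is the double use of orthogonality (against $d\mu$ for the expansion, and against $d\nu$ for computing the coefficient), and one must check that the denominator cannot vanish in the admissible range of $\lambda$.
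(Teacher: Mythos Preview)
Your argument is correct and is exactly the paper's proof: the derivation of \eqref{23.5x} via the double orthogonality trick and the final evaluation at $z=z_0$ are precisely what the paper does in the paragraph preceding the theorem statement. One small slip in your parenthetical remark: the inequality should read $K_{n-1}(z_0,z_0;d\mu)^{-1}\geq \mu(\{z_0\})$ (this is \eqref{9.19}), which is the direction needed to conclude $1+\lambda K_{n-1}(z_0,z_0;d\mu)\geq 0$ from $\lambda\geq -\mu(\{z_0\})$; strictness follows since $\mu$ is nontrivial.
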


This formula was rediscovered by Nevai \cite{Nev79} for OPRL, by Cachafeiro--Marcell\'an
\cite{CM88a,CM88b}, Simon \cite{OPUC2} (in a weak form), and Wong \cite{Wong,Wong-opsfa} for
OPUC. For general measures on $\bbC$, the formula is from Cachafeiro--Marcell\'an \cite{CM89,CM91}.
In particular, in the context of OPUC, Wong \cite{Wong-opsfa} noted that one can use the CD formula
to obtain:

\begin{theorem}[Wong \cite{Wong,Wong-opsfa}]\lb{T23.2} Let $d\mu$ be a probability measure on
$\partial\bbD$ and let $d\ti\nu$ be given by
\begin{equation} \lb{23.7}
d\ti\nu = \f{d\mu+\lambda\delta_{z_0}}{1+\lambda}
\end{equation}
for $z_0\in\partial\bbD$ and $\lambda\geq -\mu(\{z_0\})$. Then
\begin{equation} \lb{23.8}
\alpha_n (d\ti\nu) = \alpha_n (d\mu) + \f{(1-\abs{\alpha_n (d\mu)}^2)^{1/2}}
{\lambda^{-1} + K_n (z_0,z_0;d\mu)}\, \ol{\varphi_{n+1}(z_0)}\, \varphi_n^*(z_0)
\end{equation}
\end{theorem}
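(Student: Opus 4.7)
The plan is to derive \eqref{23.8} from Geronimus's formula \eqref{23.6} by evaluating at $z=0$ and extracting Verblunsky coefficients via the identity $\alpha_n=-\overline{\Phi_{n+1}(0)}$, which comes from setting $z=0$ in the Szeg\H{o} recursion \eqref{1.6} together with the easy fact $\Phi_{n+1}^*(0)=1$. Since positive rescaling of a measure does not change the monic OPs, I have $\Phi_n(z;d\ti\nu)=X_n(z;d\mu+\lambda\delta_{z_0})$, so I can feed the unnormalized $\nu=d\mu+\lambda\delta_{z_0}$ directly into Theorem~\ref{T23.1}. Applying \eqref{23.6} with $n$ replaced by $n+1$, setting $z=0$, conjugating, and negating gives
\begin{equation*}
\alpha_n(d\ti\nu)-\alpha_n(d\mu)=\f{\lambda\,\overline{\Phi_{n+1}(z_0;d\mu)}\;\overline{K_n(z_0,0;d\mu)}}{1+\lambda K_n(z_0,z_0;d\mu)},
\end{equation*}
using that $K_n(z_0,z_0)$ is real. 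Dividing numerator and denominator by $\lambda$ already produces the denominator of \eqref{23.8}, so the task reduces to identifying the numerator with $\rho_n\,\overline{\varphi_{n+1}(z_0)}\,\varphi_n^*(z_0)$, i.e.\ to computing $\overline{K_n(z_0,0)}=K_n(0,z_0)$ and converting monic to orthonormal normalization.

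The key step is to simplify $K_n(0,z_0)$ via the OPUC CD formula \eqref{3.2} at $(z,\zeta)=(0,z_0)$, where the denominator becomes $1$. Since $\varphi_{n+1}^*(0)=\kappa_{n+1}$ (real and positive) and $\varphi_{n+1}(0)=\kappa_{n+1}\Phi_{n+1}(0)=-\kappa_{n+1}\bar\alpha_n$, this yields
\begin{equation*}
K_n(0,z_0)=\kappa_{n+1}\bigl[\varphi_{n+1}^*(z_0)+\alpha_n\varphi_{n+1}(z_0)\bigr].
\end{equation*}
Adding $\alpha_n$ times \eqref{1.7b} to the ${}^*$-recursion in \eqref{3.14} and using $1-|\alpha_n|^2=\rho_n^2$ collapses the bracket to $\rho_n\varphi_n^*(z_0)$; combined with $\kappa_{n+1}\rho_n=\kappa_n$ (from $\|\Phi_{n+1}\|=\rho_n\|\Phi_n\|$, valid because $\mu$ is a probability measure), this gives the clean identity $K_n(0,z_0)=\kappa_n\varphi_n^*(z_0)$.

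Substituting this together with $\overline{\Phi_{n+1}(z_0)}=\overline{\varphi_{n+1}(z_0)}/\kappa_{n+1}$ turns the numerator into $\lambda(\kappa_n/\kappa_{n+1})\overline{\varphi_{n+1}(z_0)}\varphi_n^*(z_0)=\lambda\rho_n\overline{\varphi_{n+1}(z_0)}\varphi_n^*(z_0)$, and \eqref{23.8} drops out. The main obstacle is the kernel simplification $K_n(0,z_0)=\kappa_n\varphi_n^*(z_0)$: once one recognizes that the CD formula at one argument $0$ reduces to evaluating just the two initial-value constants $\Phi_{n+1}^*(0)$ and $\Phi_{n+1}(0)$, the Szeg\H{o} recursions do the rest, and the remainder is bookkeeping with the normalization constants $\kappa_n$ and $\rho_n$.
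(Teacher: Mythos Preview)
Your proof is correct and follows essentially the same route as the paper: apply Geronimus's formula \eqref{23.6} at $z=0$, extract $\alpha_n=-\overline{\Phi_{n+1}(0)}$, and simplify $\overline{K_n(z_0,0)}$ via a CD formula. The only difference is that the paper uses the form \eqref{3.18}, where setting $\zeta=0$ kills the $\zeta\varphi_n(\zeta)$ term outright and yields $\overline{K_n(z_0,0)}=\varphi_n^*(z_0)/\|\Phi_n\|$ in one line, whereas you use \eqref{3.2} and then need the extra Szeg\H{o}-recursion step to collapse $\varphi_{n+1}^*+\alpha_n\varphi_{n+1}$ down to $\rho_n\varphi_n^*$; both arrive at the same identity $K_n(0,z_0)=\kappa_n\varphi_n^*(z_0)$.
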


\begin{proof} Let
\begin{equation} \lb{23.9}
Q_n = \lambda^{-1} + K_n(z_0,z;d\mu)
\end{equation}
Since
\begin{equation} \lb{23.10}
\Phi_{n+1}(z;d\ti\nu) = \Phi_{n+1}(z;d\nu)
\end{equation}
and
\begin{equation} \lb{23.11}
\alpha_n (d\ti\nu) = \ol{-\Phi_{n+1} (0;d\ti\nu)}
\end{equation}
\eqref{23.6} becomes
\begin{equation} \lb{23.12}
\alpha_n (d\ti\nu) -\alpha_n (d\mu) = Q_n^{-1}\,\,\, \ol{\Phi_{n+1}(z_0)}\,
\ol{K_n(z_0,0;d\mu)}
\end{equation}
By the CD formula in the form \eqref{3.18},
\begin{align}
\ol{K_n(z_0,0)} &= \varphi_n^*(z_0)\, \ol{\varphi_n^*(0)} \lb{23.13} \\
&= \f{\varphi_n^*(z_0)}{\Norm{\Phi_n}} \lb{23.14}
\end{align}
since $\Phi_n^*(0)=1$ and $\Norm{\Phi_n^*} = \Norm{\Phi_n}$. \eqref{23.8} then follows
from $\Norm{\Phi_{n+1}}/\Norm{\Phi_n} = (1-\abs{\alpha_n}^2)^{1/2}$.
\end{proof}

To see a typical application:

\begin{corollary}\lb{C23.3} Let $z_0$ be an isolated pure point of a measure $d\mu$ on
$\partial\bbD$. Let $d\ti\nu$ be given by \eqref{23.7} where $\lambda > -\mu(\{z_0\})$
{\rm{(}}so $z_0$ is also a pure point of $d\ti\nu${\rm{)}}. Then for some $D,C>0$,
\begin{equation} \lb{23.15}
\abs{\alpha_n (d\ti\nu)-\alpha_n (d\mu)} \leq De^{-Cn}
\end{equation}
\end{corollary}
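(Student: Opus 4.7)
The plan is to extract decay from Theorem~\ref{T23.2} by showing two things: (a) the denominator $\lambda^{-1}+K_n(z_0,z_0;d\mu)$ is bounded away from zero uniformly in $n$, and (b) the numerator $(1-\abs{\alpha_n(d\mu)}^2)^{1/2}\,\abs{\varphi_{n+1}(z_0)}\,\abs{\varphi_n^*(z_0)}$ decays exponentially in $n$. The factor $(1-\abs{\alpha_n}^2)^{1/2}$ is trivially bounded by $1$, and since $z_0\in\partial\bbD$, the relation $\varphi_n^*(z_0)=z_0^n\,\ol{\varphi_n(z_0)}$ gives $\abs{\varphi_n^*(z_0)}=\abs{\varphi_n(z_0)}$; hence everything reduces to showing $\abs{\varphi_n(z_0)}\leq C r^n$ for some $r<1$ and to controlling the denominator.

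For the denominator, the OPUC analog of Theorem~\ref{T9.5} (whose proof transcribes verbatim: take $Q_n(z)=\chi_{\{z_0\}}$-type trial polynomials on $\partial\bbD$) gives $K_n(z_0,z_0;d\mu)\nearrow m^{-1}$, where $m\equiv \mu(\{z_0\})>0$. A quick case split on the sign of $\lambda$ handles the lower bound: if $\lambda>0$ then $\abs{\lambda^{-1}+K_n}\geq \lambda^{-1}$, while if $-m<\lambda<0$ then $\lambda^{-1}<-m^{-1}\leq -K_n(z_0,z_0;d\mu)$, so $\abs{\lambda^{-1}+K_n(z_0,z_0;d\mu)}\geq -\lambda^{-1}-m^{-1}>0$. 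Either way, the denominator is bounded below uniformly in $n$.

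The exponential decay of $\abs{\varphi_n(z_0)}$ will come from the Christoffel variational principle (Theorem~\ref{T9.2}) applied with a trial polynomial that exploits the isolation of $z_0$. Pick $\delta>0$ so that $\supp(d\mu)\cap\{\abs{z-z_0}<\delta\}=\{z_0\}$, and take $p(z)=(1+\bar z_0 z)/2$, which satisfies $p(z_0)=1$ and $\abs{p(z)}<1$ on $\partial\bbD\setminus\{z_0\}$. By compactness of $\supp(d\mu)\setminus\{z:\abs{z-z_0}<\delta\}$, there is $r<1$ with $\abs{p(z)}\leq r$ on $\supp(d\mu)\setminus\{z_0\}$. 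Using $Q_n=p^n$ as a trial polynomial of degree $n$ in \eqref{9.6} gives
\[
K_n(z_0,z_0;d\mu)^{-1}\leq \int\abs{p(z)}^{2n}\,d\mu \leq m + r^{2n}\mu(\partial\bbD),
\]
and combined with the upper bound $K_n(z_0,z_0;d\mu)\leq m^{-1}$ (noted above), an elementary rearrangement yields $m^{-1}-K_n(z_0,z_0;d\mu)\leq \wti C\, r^{2n}$. Since $K_n$ is monotone increasing in $n$,
\[
\abs{\varphi_{n+1}(z_0)}^2 = K_{n+1}(z_0,z_0;d\mu)-K_n(z_0,z_0;d\mu)\leq m^{-1}-K_n(z_0,z_0;d\mu)\leq \wti C\, r^{2n},
\]
so $\abs{\varphi_{n+1}(z_0)}\leq C' r^n$.

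Combining (a) and (b) in \eqref{23.8} gives $\abs{\alpha_n(d\ti\nu)-\alpha_n(d\mu)}\leq D r^{2n}=D e^{-Cn}$ with $C=-2\log r>0$. The only nontrivial step is the exponential rate in the variational principle; once the isolating polynomial $p$ is in hand, the rest is bookkeeping. A minor subtlety is that Theorem~\ref{T9.5} is stated for OPRL, so one must note (or reprove in two lines) that the same variational argument gives $K_n(z_0,z_0;d\mu)\to m^{-1}$ for OPUC, but this is immediate from the same Christoffel principle together with $\mu\geq m\delta_{z_0}$.
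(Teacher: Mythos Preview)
Your proof is correct and follows the same overall strategy as the paper: feed exponential decay of $\varphi_n(z_0)$ into Wong's formula \eqref{23.8}. The difference is in how that decay is obtained. The paper simply cites Theorem~10.14.2 of \cite{OPUC2} for the bound $\abs{\varphi_n(z_0)}\leq D_1 e^{-Cn/2}$ and then says ``this plus \eqref{23.8} implies \eqref{23.15}.'' You instead give a self-contained derivation via the Christoffel variational principle, using the localizing trial polynomial $p(z)=(1+\bar z_0 z)/2$ to produce $K_n(z_0,z_0)^{-1}\leq m+r^{2n}\mu(\partial\bbD)$ and then telescoping to get $\abs{\varphi_{n+1}(z_0)}^2\leq m^{-1}-K_n$. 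This is a nice elementary argument that avoids the external citation and stays entirely within the tools developed in the paper (Sections~\ref{s9} and~\ref{s16}). You also make explicit the lower bound on the denominator $\abs{\lambda^{-1}+K_n(z_0,z_0)}$, with the case split on $\operatorname{sgn}\lambda$, which the paper's two-line proof leaves implicit. Both routes are short; yours trades a black-box citation for a transparent variational estimate.
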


\begin{proof} By Theorem~10.14.2 of \cite{OPUC2},
\begin{equation} \lb{23.16}
\abs{\varphi_n (z_0;d\mu)}\leq D_1 e^{-\f12 Cn}
\end{equation}
This plus \eqref{23.8} implies \eqref{23.15}.
\end{proof}

This is not only true for OPUC but also for OPRL:

\begin{corollary}\lb{C23.4} Let $z_0$ be an isolated pure point of a measure of compact support
$d\mu$ on $\bbR$. Let $d\ti\nu$ be given by \eqref{23.7} where $\lambda >-\mu(\{z_0\})$ so $z$
is also a pure point of $d\ti\nu$. Then for some $D,C >0$,
\begin{alignat}{2}
&\text{\rm{(i)}} \qquad && \biggl| \f{\kappa_n(d\nu)}{\kappa_n(d\mu)} - (1+\lambda)^{1/2}\biggr|
\leq De^{-Cn} \lb{23.17} \\
&\text{\rm{(ii)}} \qquad && \Norm{p_n(\dott,d\nu) - (1+\lambda)^{1/2} p_n (\dott,d\mu)}_{L^2(d\nu)}
\leq De^{-Cn} \lb{23.18} \\
&\text{\rm{(iii)}} \qquad && \abs{a_n(d\nu)-a_n(d\mu)} \leq De^{-Cn} \lb{23.19} \\
&{} \qquad && \abs{b_n (d\nu) - b_n (d\mu)} \leq De^{-Cn} \lb{23.20}
\end{alignat}
\end{corollary}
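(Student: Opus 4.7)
My plan is to imitate the proof of Corollary~\ref{C23.3}, substituting an OPRL version of the exponential decay used there. The critical input is the fact that at an isolated point $z_0$ of $\sigma(d\mu)$, one has
\[
|p_n(z_0; d\mu)| \leq D_1 e^{-Cn}
\]
for some $C, D_1 > 0$. This is the standard statement that eigenvectors of a Jacobi matrix decay exponentially at eigenvalues isolated from the essential spectrum, applied to the sequence $\{p_n(z_0)\}$, which (up to normalization by $\mu(\{z_0\})^{1/2}$) is precisely the eigenvector. Combined with the identity $\sum_n |p_n(z_0)|^2 = \mu(\{z_0\})^{-1}$ (a consequence of Theorem~\ref{T9.5}), this yields $\mu(\{z_0\})^{-1} - K_{n-1}(z_0, z_0; d\mu) = O(e^{-2Cn})$. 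In particular, the denominator $1 + \lambda K_{n-1}(z_0, z_0; d\mu)$ appearing in Theorem~\ref{T23.1} stays uniformly bounded away from zero, thanks to the hypothesis $\lambda > -\mu(\{z_0\})$.

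For (i), I would rewrite Geronimus's formula (\ref{23.6}) in terms of orthonormal polynomials: multiplying by $\kappa_n(d\nu)$ and using $p_n = \kappa_n X_n$ gives, for $\nu = \mu + \lambda\delta_{z_0}$ (unnormalized),
\[
p_n(z; d\nu) = \frac{\kappa_n(d\nu)}{\kappa_n(d\mu)}\left[p_n(z; d\mu) - \frac{\lambda\, p_n(z_0; d\mu)\, K_{n-1}(z_0, z; d\mu)}{1 + \lambda K_{n-1}(z_0, z_0; d\mu)}\right].
\]
Taking $L^2(d\nu)$ norms of both sides, the LHS is $1$. For the RHS, the cross term in $\|\cdot\|_{L^2(d\mu)}^2$ vanishes by $p_n \perp K_{n-1}(z_0,\cdot)$, and $\|K_{n-1}(z_0,\cdot)\|_{L^2(d\mu)}^2 = K_{n-1}(z_0, z_0)$ by the reproducing property (\ref{1.12}); the $\lambda\delta_{z_0}$ contribution collapses by a direct calculation. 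The result is
\[
\frac{\kappa_n(d\mu)^2}{\kappa_n(d\nu)^2} = 1 + \frac{\lambda\, |p_n(z_0; d\mu)|^2}{1 + \lambda K_{n-1}(z_0, z_0; d\mu)} = 1 + O(e^{-2Cn}).
\]
Converting from $d\nu$ to $d\tilde\nu = d\nu/(1+\lambda)$ via (\ref{1.8a}) inserts the factor $(1+\lambda)^{1/2}$, yielding (\ref{23.17}).

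For (ii), substitute the bound on $\kappa_n(d\nu)/\kappa_n(d\mu)$ back into the displayed identity. The difference $p_n(\cdot; d\nu) - p_n(\cdot; d\mu)$ splits into (a) a scalar multiple $O(e^{-2Cn})$ of $p_n(\cdot; d\mu)$ and (b) a multiple of $K_{n-1}(z_0, \cdot; d\mu)$ whose $L^2(d\nu)$ norm is controlled by $|p_n(z_0)|$ and hence is $O(e^{-Cn})$; rescaling by $(1+\lambda)^{1/2}$ gives (\ref{23.18}). For (iii), use $a_n = \kappa_{n-1}/\kappa_n$ together with (i) and boundedness of $a_n(d\mu)$ (from compactness of $\supp d\mu$). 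For (iv), use $b_n(d\tilde\nu) = \langle z p_{n-1}(\cdot; d\tilde\nu), p_{n-1}(\cdot; d\tilde\nu)\rangle_{L^2(d\tilde\nu)}$: expanding via (ii) and using that $|z|$ is bounded on $\supp d\tilde\nu$ reduces everything to $b_n(d\mu)$ plus exponentially small corrections of the form $\lambda z_0 |p_{n-1}(z_0)|^2/(1+\lambda)$.

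The main obstacle is really just the exponential decay $|p_n(z_0)| \leq D_1 e^{-Cn}$ at isolated spectral points. In the OPUC context, Corollary~\ref{C23.3} cited Theorem~10.14.2 of \cite{OPUC2}; for OPRL the analogous statement follows from the spectral theory of tridiagonal operators (the transfer matrix at $z_0$ is hyperbolic because $z_0$ lies outside the essential spectrum and indeed is separated from the rest of the spectrum by a gap), but I would want to cite a clean reference for the precise version needed rather than rederive it.
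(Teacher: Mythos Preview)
Your proposal is correct and follows essentially the same route as the paper's own sketch: rewrite Geronimus's formula in orthonormal form, take $L^2(d\nu)$ norms using the reproducing property and boundedness of $K_{n-1}(z_0,z_0)$, and feed the exponential decay $|p_n(z_0)|\leq D_1 e^{-Cn}$ through the resulting identities. The paper supplies the reference you are looking for---it cites Agmon \cite{Agmon} and Combes--Thomas \cite{ComTh} for the exponential decay at isolated spectral points---and for (iii) it uses the integral formula $a_n=\int x\,p_n p_{n-1}\,d\mu$ together with (ii), whereas your use of $a_n=\kappa_{n-1}/\kappa_n$ directly from (i) is a slightly cleaner shortcut.
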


\begin{proof}[Sketch] Isolated points in the spectrum of Jacobi matrices obey
\begin{equation} \lb{23.21}
\abs{p_n(z_0)} \leq D_1 e^{-C_1 n}
\end{equation}
for suitable $C_1,D_1$ (see \cite{Agmon,ComTh}).

\eqref{23.6} can be rewritten for OPRL
\begin{equation} \lb{23.22}
\kappa_n(d\mu) P_n(x;d\nu) = p_n (x;d\mu) -\lambda p_n (x_0;d\mu)\,
\f{K_{n-1} (x_0,x;d\mu)}{1+\lambda K_n(x_0,x_0;d\mu)}
\end{equation}
Since
\[
\Norm{K_{n-1}(x_0,x;d\mu)}^2_{L^2(d\mu)} = K_{n-1} (x_0,x_0;d\mu)
\]
and
\[
\int \abs{K_{n-1} (x_0,x_0;d\mu)}^2\, d\delta_{x_0} = K_{n-1} (x_0,x_0;d\mu)^2
\]
and $K_{n-1}(x_0,x_0)$ is bounded (by  \eqref{23.21}), we see that $\Norm{K_{n-1}(x_0;\dott;d\mu)}_{L^2(d\nu)}$
is bounded. Thus, by \eqref{23.21} and \eqref{23.22},
\[
\kappa_n(d\mu) \kappa_n(d\nu)^{-1} = (1+\lambda)^{-1/2} + O(e^{-C_1n})
\]
which leads to \eqref{23.17}.

This in turn leads to (ii), and that to (iii) via \eqref{23.21}, and, for example,
\begin{align}
a_n(d\mu) &= \int xp_n (x;d\mu) p_{n-1} (x;d\mu)\, d\mu \lb{23.23} \\
a_n (d\nu) &= \int xp_n (x;d\nu) p_{n-1} (x;d\nu)\, d\nu \lb{23.24}
\end{align}
\end{proof}

This shows what happens if the weight of an isolated eigenvalue changes. What happens if an isolated
eigenvalue is totally removed is much more subtle---sometimes it is exponentially small, sometimes not.
This is studied by Wong \cite{Wong-prep}.

\bigskip
%%%%%%%%%%%%%%%%%%%%%%%%%%%%%

\end{document}